\def\ds@whichfont{dsrom}
\DeclareMathAlphabet{\mathds}{U}{\ds@whichfont}{m}{n}
\DeclareMathOperator{\esssup}{esssup}
\newtheorem{theorem}{Theorem}[section]
\newtheorem{lemma}[theorem]{Lemma}
\newtheorem{corollary}[theorem]{Corollary}
\newtheorem{proposition}[theorem]{Proposition}
\theoremstyle{definition}
\newtheorem{definition}[theorem]{Definition}
\newtheorem{assumption}[theorem]{Assumption}
\newtheorem{remark}[theorem]{Remark}
\newtheorem{example}[theorem]{Example}
\numberwithin{equation}{section}
\theoremstyle{plain}
\numberwithin{equation}{section} 
\numberwithin{figure}{section} 
\theoremstyle{plain}
\theoremstyle{plain}
\theoremstyle{remark}
\newtheorem*{acknowledgement*}{Acknowledgement}
\newcommand{\cA}{{\mathcal A}}
\newcommand{\cB}{{\mathcal B}}
\newcommand{\cC}{{\mathcal C}}
\newcommand{\cE}{{\mathcal E}}
\newcommand{\cF}{{\mathcal F}}
\newcommand{\cG}{{\mathcal G}}
\newcommand{\cH}{{\mathcal H}}
\newcommand{\cJ}{{\mathcal J}}
\newcommand{\cK}{{\mathcal K}}
\newcommand{\cL}{{\mathcal L}}
\newcommand{\cS}{{\mathcal S}}
\newcommand{\cY}{{\mathcal Y}}
\newcommand{\cZ}{{\mathcal Z}}
\newcommand{\te}{{\theta}}
\newcommand{\Om}{{\Omega}}
\newcommand{\om}{{\omega}}
\newcommand{\ve}{{\varepsilon}}
\newcommand{\del}{{\delta}}
\newcommand{\Del}{{\Delta}}
\newcommand{\gam}{{\gamma}}
\newcommand{\Gam}{{\Gamma}}
\newcommand{\Sig}{{\Sigma}}
\newcommand{\sig}{{\sigma}}
\newcommand{\al}{{\alpha}}
\newcommand{\be}{{\beta}}
\newcommand{\la}{{\lambda}}
\newcommand{\bbC}{{\mathbb C}}
\newcommand{\bbE}{{\mathbb E}}
\newcommand{\bbN}{{\mathbb N}}
\newcommand{\bbP}{{\mathbb P}}
\newcommand{\bbR}{{\mathbb R}}
\newcommand{\bbZ}{{\mathbb Z}}
\newcommand{\bbI}{{\mathbb I}}
\begin{document}
\title[]{Large deviations, moment estimates and almost sure invariance principles for skew products with mixing base maps and expanding on the average fibers}
 \author{Yeor Hafouta \\
\vskip 0.1cm
Department  of Mathematics\\
The Ohio State University}
\email{yeor.hafouta@mail.huji.ac.il, hafuta.1@osu.edu}

\maketitle
\markboth{Y. Hafouta}{Spectral method}
\renewcommand{\theequation}{\arabic{section}.\arabic{equation}}
\pagenumbering{arabic}

\begin{abstract} 
In this paper we show how to apply classical probabilistic tools for  partial sums $\sum_{j=0}^{n-1}\varphi\circ\tau^j$ generated  by a skew product  $\tau$, built over a sufficiently well mixing base map and a random expanding dynamical system. Under certain regularity assumptions on the observable $\varphi$,
we obtain  a central limit theorem (CLT) with rates, a functional CLT, an almost sure invariance principle (ASIP), a moderate deviations principle, several exponential concentration inequalities  and Rosenthal type moment estimates for skew products with $\alpha, \phi$ or $\psi$ mixing base maps and expanding on the average random fiber maps.  All of the results are new even in the uniformly expanding case. The main novelty here (contrary to \cite{ANV}) is that the random maps are not independent, they do not preserve the same measure and the observable $\varphi$ depends also on the base space.
For stretched exponentially $\al$-mixing base maps our proofs
 are based on multiple correlation estimates, which make the classical method of cumulants applicable.
  For $\phi$ or $\psi$ mixing  base maps, we obtain an ASIP and maximal and concentration inequalities
 by establishing an $L^\infty$ convergence of the iterates $\cK^n$ of a certain transfer operator $\cK$ with respect to a certain sub-$\sig$-algebra, which yields  an appropriate (reverse) martingale-coboundary decomposition. 
\end{abstract}

\section{Introduction and a preview  of the main results}
\subsection{Quenched limit theorems for random random dynamical systems}
Let $(X,\cB,m)$ be a probability space and let $(\Om,\cF,\bbP,\sig)$ be an invertible ergodic probability preserving system. Let $T_\om:X\to X,\,\om\in\Om$ be a family of non-singular maps (i.e.\ $m\circ T_\omega^{-1}\ll m$) so that the corresponding skew product $\tau$ given by $\tau(\om,x)=(\sig\om,T_\om x)$ is measurable. A random dynamical system is formed by the sequence of compositions 
$$
T_\om^n x, n\geq0\, \text{ where }\,T_\om^n=T_{\sig^{n-1}\om}\circ\cdots\circ T_{\sig\om}\circ T_{\om}
$$
taken along the orbit of a ``random" point $\om$.  The system $(\Om,\cF,\bbP,\sig)$ is often referred to as the \textit{driving system}, and the map $\sig$ is often referred to as the \textit{base map}. 

Let $\varphi:\Om\times X\to\bbR$ be a measurable function (``an observable") and let $\mu$ be a $\tau$-invariant probability measure on $\Om\times X$. Then $\mu$ can be decomposed as $\mu=\int \mu_\om d\bbP(\om)$, where $\mu_\om$ is a family of probability measures on $X$ so that $(T_\om)_*\mu_\om=\mu_{\sig\om}$ for $\bbP$-a.e. $\om$.
Set $S_n\varphi=\sum_{j=0}^{n-1}\varphi\circ\tau^j$. Then 
$$S_n\varphi(\om,x):=S_n^\om\varphi(x)=\sum_{j=0}^{n-1}\varphi_{\sigma^j\om}\circ T_\om^j,$$ where $\varphi_\om(\cdot)=\varphi(\om,\cdot)$.
For $\bbP$ almost every $\om$ we can consider the sequence of functions $S_n^\om\varphi(\cdot)$ on the probability space $(X,\cB,\mu_\om)$ as random variables. Limit theorems for such sequences are called \textit{quenched limit theorems}. Among the first papers dealing with quenched limit theorems for random dynamical systems are \cite{Kifer-1996, Kifer-1998}, where in \cite{Kifer-1996} a quenched large deviations principle was obtained, and in \cite{Kifer-1998} a central limit theorem and a law of iterated logarithm were established.
Since then  quenched limit theorems for random dynamical systems were extensively studied. For instance, in \cite{DFGTV0,DH2, DH3, DHS} almost sure invariance principle (almost sure approximation by a sum of independent Guassians) was established for random expanding or hyperbolic maps $T_\om$, in \cite{HK, DH1} Berry-Esseen theorems (optimal rates in the CLT) were obtained for similar classes of maps and in \cite{HK, DFGTV1, DFGTV2, DS} local central limit theorems were achieved. In addition, in \cite{HafYT} several limit theorems were extended to random non-uniformly hyperbolic or expanding maps. We would also like to refer to \cite{ABR} for related results concerning mixing rates for random non-uniformly hyperbolic maps and to \cite{HNTV} for related results concerning sequential dynamical systems, were an almost sure invariance principle (ASIP) was obtained.
We note that in many of the examples these results are obtained for the unique measure $\mu$
 such that  $\mu_\om$ is absolutely continuous with respect to $m$. However, some results hold true even for maps $T_\om:\cE_\om\to\cE_{\sigma\om}\subset X$ which are defined on random subsets of $X$ (see \cite{KifRanDy}), where in this case the most notable choices of $\mu_\om$ are the, so called, random Gibbs measures (see \cite{HK, MSU}). 
 

\subsection{Limit theorems skew-products}\label{skew}
Let us consider the sums $S_n\varphi=\sum_{j=0}^{n-1}\varphi\circ\tau^j$ as random variables on the probability space  $(\Om\times X,\cF\times\cB,\mu)$.  In this paper will focus on limit theorems for such sequences of random variables.
In order to demonstrate the difference between  such limit theorems and the quenched ones, let us focus of the CLT. The quenched CLT means that 
for $\bbP$-a.e. $\om$, for all real $t$ we have 
$$
\lim_{n\to\infty}\mu_\om\left(\left\{x: S_n^\om\varphi(x)-\mu_\om(S_n^\om\varphi)\leq t\sqrt n\right\}\right)=
\frac{1}{\sqrt{2\pi}\sig}\int_{-\infty}^{t}e^{-\frac{s^2}{2\sig^2}}ds
$$
where $\sig\geq0$ is the number which satisfies that  $\sig^2=\lim_{n\to\infty}\frac1n\text{Var}_{\mu_\om}(S_n^\om\varphi)$ for $\bbP$-a.e. $\om$ (see \cite{Kifer-1998}).
On the other hand, the CLT for the skew product means that 
 for all real $t$ we have 
$$
\lim_{n\to\infty}\mu\left(\left\{x: S_n\varphi(x)-\mu(S_n\varphi)\leq t\sqrt n\right\}\right)=
\frac{1}{\sqrt{2\pi}\Sig}\int_{-\infty}^{t}e^{-\frac{s^2}{2\Sig^2}}ds
$$
where  $\Sig^2=\lim_{n\to\infty}\frac1n\text{Var}_{\mu}(S_n\varphi)$.
Note that, in contrast with  the quenched case, the summands $X_j=\varphi\circ\tau^j$ form a stationary sequence and, in applications,  the existence of the limit $\Sig^2$ follows from a sufficiently fast decay of $\text{Cov}(X_0,X_n)$ as $n\to\infty$. We also remark that both CLT's above are formulated when $\sig$ and $\Sig$ are positive, and when one of them vanishes then the convergence is towards the constant function $0$.

When $\mu_\om(\varphi_\om)$ does not depend on $\om$ then $\mu_\om(\varphi_\om)=\mu(\varphi)$ and $\sig^2=\Sig^2$. In this case  the quenched CLT implies the CLT for $S_n\varphi$ by integrating   $\mu_\om\left(\left\{x: S_n^\om\varphi(x)-\mu_\om(S_n^\om\varphi)\leq t\sqrt n\right\}\right)$ with respect to $\bbP$ (and similarly other distributive limit theorems for the skew product follow from the quenched ones). 
However, it is less likely to be true when $\mu_\om(\varphi_\om)$ depends on $\om$. Remark that even when  $\mu_\om(\varphi_\om)$ does not depend on $\om$ other finer results like the ASIP do not follow by integration. Indeed the ASIP concerns an almost sure approximation of the partial sums at question by a sum of independent Guassian random variables, but the quenched ASIP provides a construction of such a Guassian process which depends on the fiber $\om$.

\subsubsection{\textbf{Annealed limit theorems: iid maps}}
A particular well studied case is when the maps $T_{\sig^j\om}$ are independent. That is, 
 $\Om=\cY^\bbZ$ is a product space, the coordinates $\om_j$ of $\om=(\om_j)$ are independent (with $\sig$ being the left shift) and $T_\om=T_{\om_0}$ depends only on the $0$-th coordinate. In this case the statistical behavior of the skew product $\tau$ can be investigated using the, so called, annealed transfer operator, given by (see \cite{BalYo, Bal, Ish}),
$$
\cA g(x)=\int \cL_\om g(x)d\bbP(\om)
$$ 
where $\cL_\om$ is the transfer operator corresponding to $T_\om$ and the underlying reference measure $m$. 
In \cite{ANV} it was shown that for several classes of random expanding maps, the operator $\cA$ is quasi compact. 
Using that, a variety of limit theorems were obtained\footnote{Such as a central limit theorem, a Berry-Esseen theorem, a local central limit theorem, local large deviations principle and an almost sure invariance principle.} for random variables of the form 
$$
S_n\varphi(\om,x)=\sum_{j=0}^{n-1}\varphi(T_{\om_{j-1}}\circ\dots\circ T_{\om_0}x)
$$ 
where $(\om,x)$ are distributed according  to a $\tau$-invariant measure $\mu$ of the form $\bbP\times (h\,dm)$ for some continuous function $h$, which satisfies $\cA h=h$. The latter assumption means that the maps $T_\om$ preserve the same measure $\nu=h\, dm$.
 The point is that once quasi compactness is achieved the classical Nagaev-Guivarch method (see \cite{HH}) can be applied. This method was applied successfully to obtain limit theorems 
for deterministic dynamical systems, i.e. when $T_\om=T$  does not depend on $\om$, and  in \cite{ANV} (see also \cite{Ayvar}) this method was applied to obtain annealed limit theorems.
We note that since both the function $\varphi$ and  the measure $h \,dm$ do not depend on $\om$, and all the maps $T_\om$ preserve the measure $h \,dm$ the fiberwise centering constant $\mu_\om(S_n^\om\varphi)$ and the usual centering constant $\mu(S_n\varphi)$ are both equal to $n\int \varphi(x)h(x)dm$. Hence, as discussed in the previous section, in this setup some annealed results like the CLT already follow from the quenched ones.
 
Independence here is crucial, since it yields that the iterates on the annealed transfer operator can be written as 
\begin{equation}\label{Ann it}
\cA^n g=\int \cL_{\om}^ng\,d\bbP(\om),
\end{equation}
where $\cL_\om^n=\cL_{\sig^{n-1}\om}\circ\dots\circ\cL_{\sig\om}\circ\cL_\om$, which is the transfer operator of $T_\om^n$. Hence, the statistical behavior of the iterates $\tau^n$ of the skew product can be described by the iterates of $\cA$. Note that in this iid setup this approach works only when $\varphi(\om,x)=\varphi(x)$ does not depend on $\om$ since it requires substituting $\varphi$ (and appropriate functions of $\varphi$) into the annealed operator.

\subsubsection{\textbf{The motivation behind the present paper: non iid maps and random functions}}
The starting point of this paper is the observation that  when the coordinates $(\om_j)$ are not independent\footnote{That is, that maps $T_{\sig^j\om}$ are not iid.} there is no apparent relation between the iterates $\tau^n$ of $\tau$ and the iterates of the annealed operator $\cA$ defined above.
Thus, a natural question arising from \cite{ANV, Ayvar} is which limit theorems hold true for mixing base maps with non-independent coordinates, and functions $\varphi$ which depend on $\om$. Moreover, the assumptions in \cite{ANV} require all the maps $T_\om$ to preserve the same absolutely continuous measure $\nu=h\, dm$, and it is also desirable to prove limit theorems without such assumptions.  We note that without the above assumptions even the CLT was  not obtained before for the skew products considered in this paper, which will be our first result.

The question described above was  also one of the main motivations in \cite{ETDS}, where  a CLT, a local CLT and a renewal theorem were obtained for several classes of skew products with mixing base maps such as Markov shifts and non-uniform Young towers, together with uniformly expanding random maps. These results were obtained by a certain type of integration argument, however the method of \cite{ETDS} does not involve the iterates of an annealed transfer operator, and instead we studied directly  integrals of the form $\int \cL_{\om}^ng_\om d\bbP(\om)$, and their complex perturbations (relying on the fiberwise ``spectral" properties  and a certain type of periodic point approach which was  introduced in \cite{HK}). While \cite{ETDS} was the first paper to discuss limit theorem for skew products with non independent fiber maps and random observables, all the results there were obtained for fiberwise centered observables $\varphi$ (i.e. $\mu_\om(\varphi_\om)=0$). Moreover, the maps $T_\om$ in \cite{ETDS} were uniformly expanding, the base map had a periodic point  and the random transfer operator satisfied certain regularity assumptions as functions of $\om$ around the periodic orbit. 
From this point of view, a second motivation for the present paper is to prove limit theorem for skew products with non-independent fiber maps $T_{\sig^j\om}$ without the fiberwise centralization assumption and without additional topological assumptions like the behavior around a periodic orbit. We note that apart from the CLT we did not consider in \cite{ETDS} any of the limit theorems obtained in the present paper, and so almost all the results in the present paper are new even under the fiberwise centering assumption.

\subsection{Our new results and the method of the proofs}\label{New}
As explained in the previous section,
the goal of this paper is to obtain limit theorems with deterministic centering conditions for skew products $\tau$ built over mixing base maps and non-uniformly expanding maps $T_\om$. More precisely, we still consider a product space $\Om=\cY^\bbZ$, but with ``weakly-dependent" coordinates $\om_j$ instead of independent ones. We consider a family of non-uniformly expanding map $T_\om=T_{\om_0}$ and observables of the form $\varphi(\om,x)=\varphi_{\om_0}(x)$ and  prove limit theorems for sequences of the form $Z_n=S_n\varphi-n\int \varphi d\mu$, where
$$
S_n\varphi(\om,x)=\sum_{j=0}^{n-1}\varphi_{\om_j}(T_{\om_{j-1}}\circ\dots\circ T_{\om_0}x)=\sum_{j=0}^{n-1}\varphi_{\sigma^j\om}(T_{\om}^j(x))
$$
considered as a random variables on the probability space $(\Om\times X,\cF\times\cB,\mu)$, where 
$\mu$ is the unique $\tau$-invariant measure with $\mu_\om$ being absolutely continuous with respect to $m$ (or when $\mu_\om$ is a random Gibbs measure).
 These results are obtained for a certain type of observables $\varphi$ so that $\varphi_{\om}(\cdot)$ has bounded variation, uniformly in $\om$. When the maps $T_\om$ are expanding on the average  we will  also have 
  a certain scaling assumption\footnote{That is $\esssup_{\om\in\Om}(K(\om)\|\varphi_\om\|_{BV})<\infty$ for some tempered random variable $K$.}, which was shown in \cite{DHS} to be necessary  for quenched limit theorems, and which is similarly necessary for obtaining limit theorems for the skew product. In what follows we will always assume that $\int\varphi d\mu=0$, which is not really a restriction since we can always replace $\varphi$ with $\varphi-\int\varphi d\mu$.

We obtain our results using two different methods, as described below.
\subsubsection{\textbf{A (functional) CLT,  moment estimates, moderate deviations and exponential concentration inequalities for $\al$-mixing driving systems via the method of cumulants}}\label{SA}
We assume first that the coordinates $\om_n$ are $\al$-mixing, with the $n$-th $\alpha$ mixing coefficient $\al_n$ (defined in \eqref{al def}) satisfy $\al_n=O(e^{-cn^\eta})$ for some $c,\eta>0$ (i.e. it is stretched exponential). The first step towards limit theorems  is standard for stationary processes: we  show that under the weaker condition $\sum_{n}n\al_n<\infty$, the limit 
$$
s^2=\lim_{n\to\infty}\frac 1n\text{Var}_\mu(S_n),\,S_n=S_n\varphi
$$ 
exists and that it vanishes if and only if $\varphi$ admits a certain co-boundary representation. When $s^2>0$ we show that $n^{-1/2}S_n$ converges in distribution towards a centered normal random variable with variance $s^2$. More precisely, we obtain the convergence rate 
$$
\sup_{t\in\bbR}\left|\mu(S_n\leq ts\sqrt n)-\frac{1}{\sqrt{2\pi}}\int_{-\infty}^t e^{-\frac12x^2}dx\right|\leq Cn^{-\frac1{2+4\gamma}},\,\gamma=1/\eta.
$$
An annealed CLT (that is for independent maps) was obtained in \cite{Ayvar} for random toral automorphism and in \cite{ANV} for more general maps. When the base map is only mixing (and $\varphi$ depend on $\om$) it was obtained in \cite{ETDS} for fiberwise centered potentials (i.e. $\mu_\om(\varphi_\om)=0$).  One of the results in this paper is the CLT for stretched exponentially $\al$-mixing base maps but without the fiberwise centering assumption (in fact, we will obtain a functional CLT, see Theorem \ref{FCLT} and the last paragraph of this section).

We also obtain certain type of large deviations results, which is often refereed to as a moderate deviations principle (see \cite{DemZet}). These results yield, for instance, that for every closed interval $[a,b]$ we have 
$$
\lim_{n\to\infty}\frac{1}{a_n^2}\ln\mu\left\{(\om,x):\,\frac{S_n(\om,x)}{a_nsn^{1/2}}\in[a,b]\right\}=-\frac12\inf_{x\in[a,b]}x^2
$$
where $a_n$ is a sequence so that $a_n\to\infty$ and $a_n=o(n^{\frac{1}{2+4\gam}})$. We also obtain several types of ``stretched" exponential concentration inequalities \eqref{FirstExpCon}, \eqref{ModDev3} and Gaussian moment estimates of Rosenthal type \eqref{Rosen}. These result are obtained using the method of cumulants. More precisely, we first obtain a certain type of multiple correlation estimates (see Proposition \ref{PropMulti}),  and then by applying general theorem  we conclude  that the $k$-th cumulant of the sum $S_n$ is at most of order $ n(k!)^{1+\gam}(c_0)^{k-2}$ for $k\geq 3$, where $c_0$ is some constant (see Theorem \ref{CumEst}). Then we can apply the method of cumulants \cite{Stat, DE}. In the annealed setup, using the quasi compactness of the annealed transfer operator large deviations principles and exponential concentration inequalities were obtained in \cite{ANV}, and the above results show that there is a similar behavior when the maps are not independent and the function $\varphi$ depends on $\om$ (see also the results in the next section where better exponential concentration inequalities are described).

The above multiple correlation estimates together with the method of cumulants and the Rosenthal type moment estimates also yield a functional CLT. Let us consider the random function $\cS_n(t)=n^{-1/2}S_{[nt]}$ on $[0,1]$. Then we show that it converges in distribution in the Skorokhod space $D[0,1]$ to $sW$, where $W$ is a standard Brownian motion and
$s^2=\lim_{n\to\infty}\frac 1n\text{Var}_\mu(S_n)$.

\subsubsection{\textbf{Limit theorems $\phi$ or $\psi$ mixing driving systems via martingale methods: almost sure invariance principle, concentration inequalities and maximal moment estimates}}\label{SM}
One of the strongest methods to prove central limit theorems and related results in probability theory and dynamical systems is the, so called, martingale-coboundary representation (Gordin's method). For a sufficiently chaotic dynamical system $(Y,\cG,\mu,T)$ and an observable $\varphi:Y\to\bbR$ it means that $\varphi$ can be represented as $\varphi=u+\chi-\chi\circ T$ for some sufficiently regular function $\chi$, and $(u\circ T^n)$ forms a reverse martingale difference. Such results are well known for deterministic  expanding (or hyperbolic) dynamical systems, and we refer to \cite{DFGTV0, Zemer, DHS} for a quenched and sequential versions of such martingale methods.
When the base map $\sig$ of the skew product $\tau$ is either (sufficiently fast) $\phi$ or $\psi$ mixing (see \eqref{phi def} and \eqref{psi def} for the relevant definitions) we obtain a certain type of $L^\infty$ martingale-coboundary  representation (i.e. $\chi\in L^\infty$) for the underlying class of observables $\varphi$ with respect to the skew product $\tau$. This was already established in \cite{ANV} in the annealed setup, and here using different arguments we obtain such a representation for skew products with mixing base maps.


Once an $L^\infty$ martingale-coboundary decomposition is achieved, as usual, we can apply the Azuma-Hoeffding inequality together with  Chernoff's bounding method and obtain exponential concentration inequalities of the form
$$
\bbP(|S_n-\bbE[S_n]|\geq tn+c_1)\leq c_2e^{-c_3 nt^2}, t>0
$$
where $c_1,c_2,c_3$ are positive constants. These concentration inequities 
are better
 than the ones we obtain using the method of cumulants, although they involve  the stronger notions of $\phi$ or $\psi$ mixing instead of $\al$-mixing\footnote{However, they only require summable $\phi$ or $\psi$ mixing coefficients and not stretched exponential ones.}.
 Another immediate consequence is moment estimates of the form
 $$
\left\|\max_{1\leq k\leq n}|S_k-\bbE[S_k]|\right\|_{L_p}=O(n^{1/2})
$$
 which hold for every $p\geq1$. Such results are known in the annealed case \cite{ANV}, and  we extend them to the skew products considered in this paper.

The idea behind the martingale-coboundary representation is as follows. 
Consider the sub-$\sig$-algebra $\cF_0$ of $\Om\times X$  generated by the projection $\pi_0(\om,x)=((\om_j)_{j\geq0},x)$, where $\om=(\om_j)_{j\in\bbZ}$. Then $\tau$ preserves $\cF_0$ since $T_\om=T_{\om_0}$ depends only on $\om_0$, and $\cF_0$ can be viewed as a sub-system (or a factor) given by $(\Om\times X,\cF_0,\mu,\tau)$. Our main argument is that, under quite mild $\phi$ or $\psi$ mixing rates for the coordinates $\om_j$, the iterates $\cK^n \varphi$  of the transfer operator $\cK$ corresponding to this system converge fast enough  in $L^\infty(\mu)$ towards $\mu(\varphi)\textbf{1}$, where $\textbf{1}$ is the function taking the constant value $1$, and $\varphi$ is our given observable. This convergence can be established for every function $\varphi$ so that $\|\varphi\|_{K,2}=\text{esssup}_{\om\in\Om}\left(K(\om)^2\|\varphi(\om,\cdot)\|_{BV}\right)<\infty$ for an appropriate tempered random variable $K(\om)$, or for any observable with $\esssup_{\om\in\Om}\|\varphi(\om,\cdot)\|_{BV}<\infty$ when the maps $T_\om$ are uniformly expanding. We stress that in any case this is not a spectral result (even under exponential mixing), since the convergence of $\cK^n$ is not in an operator norm, and, in general, it does not have exponential rate. Indeed we only prove that 
\begin{equation}\label{1}
\left\|\cK^n\varphi-\mu(\varphi)\right\|_{L^\infty}\leq C\|\varphi\|_{K,2}\cdot\gamma_n
\end{equation}
where $\gamma_n=\del^n+\phi_R([n/2])$ or $\gamma_n=\del^n+\psi([n/2])$, and $\del\in(0,1)$ and $\phi_R(\cdot)$ and $\psi(\cdot)$ are the reverse $\phi$-mixing coefficients and $\psi$-mixing coefficients defined in \eqref{phi def} and \eqref{psi def}, respectively.

Another consequence of the martingale-coboundary representation is the almost sure invariance principle (ASIP). In \cite{CM} the authors proved that under certain assumptions, a reverse martingale $M_n$ can be approximated almost surely by a sum of independent Guassians. One consequence of the methods in \cite{CM} is for sums of the form $W_n=\sum_{j=0}^{n-1}\varphi\circ \tau^j$. For such sums, the conditions of \cite[Theorem 3.2]{CM}
 shows that there is  a coupling with a sequence of iid centered normal random variables $Z_j$ with variance $s^2=\lim_{n\to\infty}\frac1n\text{Var}(W_n)$ so that 
$$
\sup_{1\leq k\leq n}\left|W_k-\sum_{j=1}^k Z_j\right|=O(n^{1/4}(\log n)^{1/2}(\log\log n)^{1/4})\,,\text{   almost surely}.
$$
In our notations, the first and second conditions of \cite[Theorem 3.2]{CM} about $\cK$ can be  verified using \eqref{1}.
In order to show that the third (and last condition) about $\cK$ in \cite[Theorem 3.2]{CM} is in force we will also need to  provide more general estimates on expression of the form
$$
\left\|\cK^i(\bar\varphi \cK^j\bar\varphi)-\mu\big(\cK^i(\bar\varphi \cK^j\bar\varphi)\big)\right\|_{L^\infty}
$$
for $1\leq i,j\leq n$, where $\bar\varphi=\varphi-\mu(\varphi)$.

We note that in \cite{ANV} the annealed ASIP was obtained using Gouezel's approach \cite{GO} and not the martingale-coboundary approach. Gouezel's approach was also used in \cite{Atnip} to obtain an ASIP for non-independent maps with mixing base maps, but as indicated in \cite{Atnip} the results are mostly applicable for Gordin-Denker maps.

Finally, we  also prove a vector-valued almost sure invariance principle for skew products with uniformly expanding random maps and exponentially fast $\al$-mixing base maps  via the method of Gou\"ezel \cite{GO}. As we have mentioned above, this method was applied in \cite{ANV} in the annealed setting, while  in \cite{Atnip} it was applied for  Gordin-Denker systems.
In a final section we also discuss a few extensions such as different types of  mixing base maps like Young towers or Gibbs-Markov maps, application of the method of cumulants for nonconventional sums of the form $S_n=\sum_{m=1}^n\prod_{j=1}^\ell \varphi_j\circ\tau^{q_j(m)}$, for polynomial $q_j(m)$, as well as extension of the results for different class of random expanding maps (the ones in \cite{MSU}).

	\section{Preliminaries and main results}
\subsection{The random maps}
	We begin by recalling the setup from~\cite{Buz}. Let $(X, \mathcal G)$ be a measurable space endowed with a probability measure $m$ and a notion of a variation $\text{v} \colon L^1(X, m) \to [0, \infty]$ which satisfies
	the following conditions:
	\begin{enumerate}
		\item[(V1)] $\text{v} (th)= |t| \text{v} (h)$;
		\item[(V2)] $\text{v} (g+h)\leq \text{v} (g)+\text{v} (h)$;
		\item[(V3)] $\|h\|_{L^\infty} \le C_{\text{v}}(\|h\|_1+\text{v} (h))$ for some constant $1\leq C_{\text{v}}<\infty$;
		\item[(V4)] for any $C>0$, the set  $\{h\colon X \to \mathbb R: \|h\|_1+\text{v} (h) \leq C\}$ is $L^1(m)$-compact;
		\item[(V5)] $\text{v}(\mathds 1)=0$, where $\mathds 1$ denotes the function equal to $1$ on $X$;
		\item[(V6)] $\{h \colon X \to \mathbb R_+: \lVert h\rVert_1=1 \ \text{and} \ \text{v} (h)<\infty\}$ is $L^1(m)$-dense in
		$\{h\colon X \to \mathbb R_+: \|h\|_1=1\}$;
		\item[(V7)] for any $f\in L^1(X, m)$ such that $\text{essinf} f>0$, we have \[\text{v}(1/f) \leq \frac{\text{v} (f)}{(\text{essinf} f)^2}.\]
		\item[(V8)] $\text{v} (fg)\leq \|f\|_{L^\infty}\cdot \text{v}(g)+\|g\|_{L^\infty}\cdot \text{v}(f)$;
		\item[(V9)] for $M>0$, $f\colon X \to [-M, M]$ measurable and  every $C^1$ function $h\colon [-M, M] \to \mathbb C$, we have
		$\text{v}(h\circ f)\leq\|h'\|_{L^\infty} \cdot \text{v}(f)$.
	\end{enumerate} We define
	\[
	BV=BV(X,m)=\{g\in L^1(X, m): \text{v} (g)<\infty \}.
	\]
	Then, $BV$ is a Banach space with respect to the norm
	\[
	\|g\|_{BV} = \|g\|+ \text{v} (g).
	\]
	\begin{remark}
		Observe that (V3) and (V8) imply that 
		\begin{equation}\label{mc}
			\|fg\|_{BV} \leq C_{\text{v}} \|f\|_{BV} \cdot \|g\|_{BV} \quad \text{for $f, g\in BV$.}
		\end{equation}
		
	\end{remark}
	
	\begin{remark}
		We observe that in \cite{Buz}, assumption (V5) is replaced by the weaker $\text{v}(\mathds 1)<+\infty$. However, for the examples we have in mind, our stronger version is satisfied. In particular, (V5) implies that $\| \mathds 1\|_{BV}=1$.
	\end{remark}
	
	The rest of our setup is almost identical to \cite{DHS}, with a single additional requirement which will be indicated in what follows. 
	Let $(\Omega, \mathcal{F}, \mathbb P, \sigma)$ be a probability space and $\sigma \colon \Omega \to \Omega$  an invertible ergodic measure-preserving transformation. Let  $T_{\omega} \colon X \to X$, $\omega \in \Omega$ be a collection of non-singular  transformations (i.e.\ $m\circ T_\omega^{-1}\ll m$ for each $\omega$) acting   on $X$.  Each transformation $T_{\omega}$ induces the corresponding transfer operator $\mathcal L_{\omega}$ acting on $L^1(X, m)$ and  defined  by the following duality relation
	\begin{equation}\label{Dual}
		\int_X(\mathcal L_{\omega} \phi)\varphi \, dm=\int_X\phi(\varphi \circ T_{\omega})\, dm, \quad \phi \in L^1(X, m), \ \varphi \in L^\infty(X, m).
	\end{equation}
	Thus, we obtain a cocycle of transfer operators  $(\Omega, \mathcal F, \mathbb P, \sigma, L^1(X, m), \mathcal L)$ that we denote by $\mathcal L=(\mathcal L_\omega)_{\omega \in \Omega}$. For $\omega \in \Omega$ and $n\in \mathbb N$, set
	\[
	\mathcal L_\omega^n:=\mathcal L_{\sigma^{n-1} \omega} \circ \ldots \circ \mathcal L_{\sigma \omega} \circ \mathcal L_\omega.
	\]
	
		We recall the  notion of a tempered random variable.
	\begin{definition}
		We say that a measurable map $K\colon \Omega \to (0, +\infty)$ is \emph{tempered} if
		\[
		\lim_{n\to \pm \infty} \frac 1n \log K(\sigma^n \omega)=0, \quad \text{for $\mathbb P$-a.e. $\omega \in \Omega$.}
		\]
	\end{definition}

In this paper we will consider the following assumptions on the random transfer operators.
	\begin{definition}\label{good}
		A cocycle $\mathcal L=(\mathcal L_\omega)_{\omega \in \Omega}$ of transfer operators is said to be \emph{good} if the following conditions hold:
		\begin{itemize}
			\item $\Omega$ is a Borel subset of a separable, complete metric space and $\sigma$ is a homeomorphism. Moreover, $\mathcal L$ is $\mathbb P$-continuous, i.e. $\Omega$ can be written as a countable union of measurable sets such that $\omega \mapsto 
			\mathcal L_\omega$ is continuous on each of those sets;
\item There is a tempered random variable $N(\om)$ so that 
\begin{equation}\label{V}
		 v(g\circ T_\omega) \le N(\omega) v(g), \quad \text{for $\mathbb P$-a.e. $\omega \in \Omega$ and $g\in BV$.}
	\end{equation}			
			
			\item there exists a random variable $C\colon \Omega \to (0, +\infty)$ such that $\log C\in L^1(\Omega, \mathbb P)$ and
			\[
			\|\mathcal L_\omega h\|_{BV}\le C(\omega) \|h\|_{BV}, \quad \text{for $\mathbb P$-a.e. $\omega \in \Omega$ and $h\in BV$;}
			\]
			\item there exist $N\in \bbN$ and random variables $\alpha^N, K^N \colon \Omega \to (0, +\infty)$ such that 
			\[
			\int_\Omega \log  \alpha^N \, d\mathbb P <0, \quad \log K^N \in L^1(\Omega, \mathbb P)
			\]
			and, for $\mathbb P$-a.e. $\omega \in \Omega$ and $h\in BV$,
			\[
			\text{v}(\mathcal L_\omega^N h) \leq \alpha^N (\omega) \text{v}(h)+ K^N(\omega) \|h\|_1;
			\]
			\item for each $a>0$ and $\mathbb P$-a.e. $\omega \in \Omega$, there exist random numbers $n_c(\omega)<+\infty$ and $\alpha_0(\omega), \alpha_1(\omega), \ldots$ such that for every $h\in \mathcal C_a$, 
			\begin{equation}\label{y}
				\text{essinf}_x (\mathcal L_\omega^n h)(x) \ge \alpha_n\|h \|_1 \quad \text{for $n\ge n_c$,}
			\end{equation}
			where 
			\begin{equation}\label{cones}
				\mathcal C_a:=\{ h\in L^\infty (X,m): h\geq 0\, \text{ and }\,\text{v}(h) \leq a\|h\|_1\} ;
			\end{equation}
			\item $\log\left(\text{essinf}_{x\in X}(\cL_\omega \mathds 1)(x)\right)\in L^1(\Omega,\mathbb P)$. 
		\end{itemize}
		
Finally, we say that the cocycle $\mathcal L$ is \textit{uniformly random} if the random variables $C,\al^N, K^N$ and $n_c$ are constants and $\al_n(\om)$ does not depend on $n$ and $\om$. 
	\end{definition}
	
\begin{remark}\label{Rm}
\,
\begin{itemize}
	\item Definition~\ref{good} almost coincides with~\cite[Definition 3]{DHS}, the only difference being the addition of  \eqref{V} (which was considered in \cite[Section 3]{DHS}.)
\item The log-integrability assumption specified at the end of Definition \ref{good} may easily be checked on explicit examples (see e.g. the discussion in \cite[Remark 2.12]{Atnip1}).
\item Furthermore, this assumption implies a certain version of the ``random covering" similar to \eqref{y}, see \cite[Remark 4]{DHS}. 
\end{itemize}
	\end{remark}

	Let us now give examples of systems satisfying our requirements. Our first example  is essentially taken from~\cite{Buz}.
	\begin{example}[Lasota-Yorke cocycles]\label{ex:good}
		Consider $X=[0,1]$, endowed with Lebesgue measure $m$ and the classical notion of variation $\text{v}$. We say that $T:X\to X$ is a piecewise monotonic non-singular map (p.m.n.s map for short) if the following conditions hold:
		\begin{itemize}
			\item T is piecewise monotonic, i.e. there exists a subdivision $0=a_0<a_1<\dots<a_N=1$ such that for each $i\in\{0,\dots,N-1\}$, the restriction $T_i=T_{|(a_i,a_{i+1})}$ is monotonic (in particular it is a homeomorphism on its image).
			\item T is non-singular, i.e. there exists $|T'|:[0,1]\to\mathbb R_+$ such that for any measurable $E\subset (a_i,a_{i+1})$, $m(T(E))=\int_E|T'|dm$.
		\end{itemize}
		The intervals $(a_i,a_{i+1})_{i\in\{0,\dots,N-1\}}$ are called the intervals of $T$. We also set $N(T):=N$ and $\lambda(T):=\text{essinf }_{[0,1]}|T'|$.
		
		We consider a family $(T_\omega)_{\omega\in\Omega}$ of random p.m.n.s as above, and such that $T:\Omega\times [0,1]\to[0,1],~(\omega,x)\mapsto T_\omega(x)$ is measurable.
		Denoting $N_\omega=N(T_\omega)$ and $\lambda_\omega=\lambda(T_\omega)$, we assume that
		\begin{itemize}
			\item The map $\omega\mapsto\left(\text{v}\left(\frac{1}{|T'_\omega|}\right),N_\omega,\lambda_\omega,a_1,\dots,a_{N_\omega-1}\right)$ is measurable.
			\item We have the following expanding-on-average property:
			\begin{equation*}
				\lim_{K\to \infty} \int_\Omega\log \min \left(\lambda_\omega, K\right)~d\mathbb P(\omega) \in (0, +\infty]
			\end{equation*}
			\item The maps $\log (N_\om)$ and $\log^+\left(\frac{N_\omega}{\lambda_\omega}\right)$ are integrable.
			\item The map $\log^+\left(\text{v}\left(\frac{1}{|T_\omega'|}\right)\right)$ is integrable.
			\item $T_\omega$ is covering, i.e. for any interval $I\subset[0,1]$, there exists a random number $n_c(\omega)>0$ such that for any $n\ge n_c$, one has
			\begin{equation}\label{COVER}
				\text{essinf }_{[0,1]} \mathcal L^{n}_\omega(\mathds1_I)>0.
			\end{equation}
			\item $\log\left(\text{essinf }_{x\in X}(\cL_\omega\mathds1)(x)\right)\in L^1(\Omega,\mathbb P)$.
		\end{itemize}
		We will call a cocycle satisfying the previous assumptions an \emph{expanding on average Lasota-Yorke cocycle}. For a countably-valued measurable family $(T_\omega)_{\omega\in\Omega}$ of expanding on average Lasota-Yorke cocycle, the associated cocycle 
		of transfer operators $(\mathcal L_\omega)_{\omega \in \Omega}$ is good (see~\cite{DS}).
	\end{example}
	The following example can be fruitfully compared to a similar one by Kifer~\cite{Kifer-Thermo}.
	\begin{example}
		We consider $X=\mathbb S^1$, endowed with the Lebesgue measure  $m$ and the notion of variation given by $\text{v}(\phi):=\int_X |\phi'|~dm=\|\phi'\|_{L^1}$. We consider a measurable map  $T:\Omega\times X\to X$ such that  $T_\omega:=T(\omega,\cdot)$ is $C^r$, $r\ge 2$. In addition, we make the following assumptions:
		\begin{itemize}
\item There exists a tempered  random variable $N(\om)$ so that \eqref{V} holds true;		
		
			\item The map $\omega\in\Omega\mapsto \left(\int_X\frac{|T_\omega''|}{(T_\omega')^2} dm,\lambda_\omega\right)$ is measurable, where $\lambda_\omega=\inf_{[0, 1]}|T_\omega'|$.
			\item The following expanding on average property holds:
			\begin{equation}\label{hyp:exponaverage}
				\int_\Omega \log(\lambda_\omega)~d\mathds P(\omega)>0.
			\end{equation}
			\item The map $\log\left(\int_X\frac{|T_\omega''|}{(T_\omega')^2} dm\right)$ is $\mathbb P$-integrable.
			\item $\log\left(\text{essinf}_{x\in X}(\cL_\omega\mathds1)(x)\right)\in L^1(\Omega,\mathbb P)$.
		\end{itemize} 
		We call a family $(T_\omega)_{\omega\in\Omega}$ satisfying the previous assumptions a \emph{smooth expanding on average cocycle}. For a family $(T_\omega)_{\omega\in\Omega}$, countably-valued and measurable, of smooth expanding on average cocycle which satisfy \eqref{V}, the associated cocycle 
		of transfer operators $(\mathcal L_\omega)_{\omega \in \Omega}$ is good (see~\cite[Example 16]{DS}). We note that our expansion on average condition \eqref{hyp:exponaverage} implies that $\mathbb P$-a.s, $T_\omega$ has non-vanishing derivative, hence is a local diffeomorphism and a monotonic map of the circle. As  noted in \cite[Example 6]{DHS}, smooth expanding on average cocycles satisfy a stronger version of the random covering property (which by \cite[Remark 0.1]{Buz} implies the one formulated in~\eqref{COVER}): for each non-trivial interval $I\subset X$, for $\mathbb P$-a.e $\omega\in\Omega$, there is a $n_c:=n_c(\omega,I)<\infty$ such that for all $n\ge n_c$,
		\[
		T_\omega^n(I)=X.
		\]
	\end{example}

\subsection{The one dimensionality of the top Oseledets space: a summary of known results}
In this section we recall two results from \cite{DHS} that will be in constant use in the course of the proofs of all of our results. 
\begin{theorem}[\cite{DHS} Theorem 12]\label{T}
	Let $\mathcal L=(\mathcal L_\omega)_{\omega \in \Omega}$ be a good cocycle of transfer operators. Then, the following holds:
	\begin{itemize}
			\item there exists an essentially unique measurable family $(h_\omega)_{\omega \in \Omega}\subset BV$ such that $h_\omega \ge 0$, $\int_X h_\omega \, dm=1$ and
			\[
			\mathcal L_\omega h_\omega=h_{\sigma \omega}, \quad \text{for $\mathbb P$-a.e. $\omega \in \Omega$;}
			\]
			\item there is a random variable $\ell:\Omega\to(0,+\infty)$ such that for $\mathbb P$-a.e. $\omega \in \Omega$, 
			\begin{equation}\label{ell}
				h_\omega \ge \ell(\omega) \quad \text{$m$-a.e.;}
			\end{equation}
			\item for $\mathbb P$-a.e. $\omega \in \Omega$, 
			\begin{equation}\label{split}
				BV=\text{span}\{h_\omega\} \oplus BV^0,
			\end{equation}
			where \[ BV^0=\bigg \{ h\in BV: \int_X h\, dm=0 \bigg \}; \]
			\item $\omega \mapsto \|h_\omega\|_{BV}$ is tempered;
			\item there exist $\lambda >0$ and  for each $\epsilon >0$, a  tempered random variable $D=D_\epsilon \colon \Omega \to (0, +\infty)$ such that for $\mathbb P$-a.e. $\omega \in \Omega$ and $n\in \mathbb N$,
			\begin{equation}\label{Exp1}
				\| \mathcal L_\omega^n \Pi(\omega) \|_{BV} \le D(\omega)e^{-\lambda n}
			\end{equation}
			and 
			\begin{equation}\label{est2x}
				\| \mathcal L_\omega^n(\text{Id}- \Pi(\omega)) \|_{BV} \le D(\omega)e^{\epsilon n},
			\end{equation}
			where $\Pi(\omega) \colon BV \to BV^0$ is a projection associated to the splitting~\eqref{split}.
			
			Finally, for uniformly random cocycles the random variables $\ell(\om)$ and $D(\om)$ can be replaced with positive constants and $\om\to\|h_\om\|_{BV}$ is a bounded random variable. 
		\end{itemize}
\end{theorem}

\begin{corollary}[\cite{DHS} Corollary 13]\label{Cor}
		Let $\mathcal L=(\mathcal L_\omega)_{\omega \in \Omega}$ be a good cocycle of transfer operators. 
		
		Then, the following holds:
		\begin{itemize}
			\item If $(h_\omega)_{\omega \in \Omega}\subset BV$  is given by Theorem~\ref{T}, then 
			\begin{equation}\label{a}
				\omega \mapsto \| 1/h_\omega\|_{BV} \  \text{is tempered.}
			\end{equation}
			\item For $\mathbb P$-a.e. $\omega \in \Omega$, 
			\begin{equation}\label{split2}
				BV=span\{\mathds 1\} \oplus BV_\omega^0,
			\end{equation}
			where 
			\[
			BV_\omega^0=\bigg \{ h\in BV: \int_X h\, d\mu_\omega=0 \bigg \},
			\]
			and $d\mu_\omega=h_\omega dm$, $\omega \in \Omega$;
			\item  there exist $\lambda' >0$ and  a  tempered random variable $\tilde D\colon \Omega \to (0, +\infty)$ such that for $\mathbb P$-a.e. $\omega \in \Omega$ and $n\in \mathbb N$,
			\begin{equation}\label{Exp2}
				\| L_\omega^n \tilde \Pi(\omega) \|_{BV} \le \tilde  D(\omega)e^{-\lambda' n}
			\end{equation}
			\begin{equation}\label{est22}
				\| L_\omega^n(\text{Id}- \tilde \Pi(\omega)) \|_{BV} \le \tilde  D(\omega),
			\end{equation}
			where $\tilde \Pi(\omega) \colon BV \to BV_\omega^0$ is a projection associated to the splitting~\eqref{split2}, and 
			\[
			L_\omega^n g =\mathcal L_\omega^n(g h_\omega) /h_{\sigma^n \omega}, \quad g\in BV, \ n\in \mathbb N.
			\]
			
			Finally, for uniformly random cocycles the random variable $\tilde D(\om)$ can be replaced with a positive constant.
		\end{itemize}
	\end{corollary}
	
	Since $\cL_\om h_\om=h_{\sigma\om}$ and $\cL_\om$ satisfy the duality relation \eqref{Dual} the measure $\mu_\om$ satisfies that for $\bbP$-a.e. $\om$ we have $(T_\om)_*\mu_\om=\mu_{\sig_\om}$. Thus $\mu_\om$ gives raise to a $T$-invariant probability measure $\mu$ on $\Om\times X$ so that 
$$
\mu(A\times B)=\int_{A}\mu_\om(B)d\bbP(\om)=\int_{A\times B}h(\om,x)d\bbP(\om)dm(x)
$$
for every measurable sets $A$ in $\Om$ and $B$ in $X$, where $h(\om,x)=h_\om(x)$.

\subsection{Main results: limit theorems for mixing base maps}
\subsection{The observable}
Let us take a measurable $\varphi:\Om\times X\to\bbR$ so that $\int \varphi d\mu=0$. 
Let $\tilde K(\om)$ be the tempered random variable defined by 
$$
\tilde K(\om)=\max\left(D(\om),\tilde D(\om), N(\om),\|1/h_\om\|_{BV}\right)
$$
where $D(\om),\tilde D(\om)$ and $N(\om)$ are specified in the definition of a good cocycles and in Theorem \ref{T} and Corollary \ref{Cor}.
In order to describe our assumptions on the observable $\varphi$, we will need the following classical result (see~\cite[Proposition 4.3.3.]{Arnold}).
	\begin{proposition}\label{PA}
		Let $\tilde K \colon \Omega \to (0, +\infty)$ be a tempered random variable. For each $\epsilon >0$, there exists a tempered random variable $\tilde K_\epsilon \colon \Omega \to (1, +\infty)$ such that
		\[
		\frac{1}{\tilde K_\epsilon (\omega)} \le \tilde K(\omega) \le \tilde K_\epsilon (\omega) \quad \text{and} \quad \tilde K_\epsilon(\omega)e^{-\epsilon |n|} \le \tilde K_\epsilon (\sigma^n \omega) \le \tilde K_\epsilon (\omega) e^{\epsilon |n|},
		\]
		for $\mathbb P$-a.e. $\omega \in \Omega$ and $n \in \mathbb Z$.
	\end{proposition}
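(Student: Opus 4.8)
The plan is to construct $K_\ve$ explicitly as a geometrically regularized supremum along the $\sig$-orbit of $K$. First I would replace $K$ by a symmetric version: put $\hat K(\om)=\max\{K(\om),1/K(\om)\}$, so that $\hat K\ge1$, and observe that $\hat K$ is again tempered, since $\frac1n\ln\big(1/K(\sig^n\om)\big)=-\frac1n\ln K(\sig^n\om)\to0$ for $\bbP$-a.e.\ $\om$. It then suffices to produce a tempered $K_\ve\colon\Om\to(1,\infty)$ with $\hat K\le K_\ve$ pointwise and with the asserted two-sided bound, because $K\le\hat K\le K_\ve$ together with $1/K\le\hat K\le K_\ve$ give $1/K_\ve\le K\le K_\ve$. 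Throughout I would work with the (equivalent) two-sided form of temperedness, $\lim_{|n|\to\infty}\frac1{|n|}\ln K(\sig^n\om)=0$ $\bbP$-a.e., cf.\ \cite{Arnold}.

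Next I would set
\[
K_\ve(\om)=\sup_{n\in\bbZ}\big(\hat K(\sig^n\om)+1\big)\,e^{-\ve|n|}
\]
and check four things. Measurability is immediate, as this is a countable supremum of measurable functions. For finiteness and the value range: by two-sided temperedness, for $\bbP$-a.e.\ $\om$ there is $N(\om)$ with $\big(\hat K(\sig^n\om)+1\big)e^{-\ve|n|}\le e^{-\ve|n|/2}\le1$ once $|n|\ge N(\om)$, so the supremum is a maximum over the finite set $\{|n|<N(\om)\}$ and is finite; taking $n=0$ gives $K_\ve(\om)\ge\hat K(\om)+1>1$ and also $K_\ve(\om)\ge\hat K(\om)\ge\max\{K(\om),1/K(\om)\}$. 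For the equivariance estimate, a reindexing gives $K_\ve(\sig^m\om)=\sup_{k\in\bbZ}\big(\hat K(\sig^k\om)+1\big)e^{-\ve|k-m|}$, and the triangle and reverse-triangle inequalities $|k|-|m|\le|k-m|\le|k|+|m|$ give $e^{-\ve|k|}e^{-\ve|m|}\le e^{-\ve|k-m|}\le e^{-\ve|k|}e^{\ve|m|}$ for all $k$; multiplying by $\hat K(\sig^k\om)+1\ge0$ and taking $\sup_k$ yields
\[
K_\ve(\om)\,e^{-\ve|m|}\le K_\ve(\sig^m\om)\le K_\ve(\om)\,e^{\ve|m|},\qquad m\in\bbZ,
\]
which is exactly the claimed inequality.

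The step I expect to be the main obstacle is showing that $K_\ve$ is itself \emph{tempered}: the equivariance estimate alone only gives $\limsup_{|m|\to\infty}\frac1{|m|}\ln K_\ve(\sig^m\om)\le\ve$, not $0$. To close this I would bring in a second, strictly smaller scale $\eta\in(0,\ve)$; two-sided temperedness of $\hat K$ furnishes a finite $C_\eta(\om)\ge1$ with $\hat K(\sig^k\om)+1\le C_\eta(\om)e^{\eta|k|}$ for all $k\in\bbZ$, and then, using $\eta|k|\le\eta|k-m|+\eta|m|$ together with $\eta-\ve<0$,
\[
K_\ve(\sig^m\om)\le C_\eta(\om)\sup_{k\in\bbZ}e^{\eta|k|-\ve|k-m|}\le C_\eta(\om)\,e^{\eta|m|}\sup_{k\in\bbZ}e^{(\eta-\ve)|k-m|}=C_\eta(\om)\,e^{\eta|m|}.
\]
Hence $\limsup_{|m|\to\infty}\frac1{|m|}\ln K_\ve(\sig^m\om)\le\eta$ for every $\eta\in(0,\ve)$, so this $\limsup$ is $\le0$; since also $K_\ve(\sig^m\om)\ge\hat K(\sig^m\om)+1\ge2$, we get $\frac1{|m|}\ln K_\ve(\sig^m\om)\ge\frac{\ln2}{|m|}\to0$, and therefore $\lim_{|n|\to\infty}\frac1{|n|}\ln K_\ve(\sig^n\om)=0$, as required. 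Apart from this separation-of-scales trick, the argument is routine bookkeeping with the triangle inequality.
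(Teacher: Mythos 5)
Your construction is correct, and it is essentially the canonical proof of this statement: the paper itself imports the proposition from Arnold's book without proof, and the regularized supremum $K_\ve(\om)=\sup_{n\in\bbZ}(\hat K(\sig^n\om)+1)e^{-\ve|n|}$ together with the two-scale argument for temperedness is exactly the standard route. The only point worth flagging is the one you already flagged yourself: the paper states temperedness as a one-sided limit ($n\to+\infty$), while your finiteness argument and the conclusion (which quantifies over all $n\in\bbZ$) genuinely need the two-sided version; that is the convention in the cited source, and for an invertible ergodic base the two are equivalent a.e., so adopting it is legitimate, but a careful write-up should either invoke that equivalence or state the hypothesis in its two-sided form.
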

Next, using the notations of Proposition \ref{PA} let $K(\om)=\tilde K_\ve(\om)$ for some $\ve<\la''/3$, where $\la''=\min(\la,\la')$, and $\la$ and $\la'$ care specified in Theorem \ref{T} and Corollary \ref{Cor}, respectively. 
\begin{remark}\label{R}
From now on we will replace both $\la$ and $\la'$ by their minimum, which for notational convenience will be denoted by $\la$.
\end{remark}

In what follows we will consider an observable $\varphi:\Om\times X\to\bbR$ satisfying the scaling condition
\begin{equation}\label{Scale}
\esssup_{\om\in\Om} (K(\om)\|\varphi_\om\|_{BV})<\infty
\end{equation}
which was first introduced in \cite{DS}.
In the uniformly random  case $\tilde K(\om)$ (and hence $K(\om)$) can be replace by a positive constant, and so the scaling condition reads 
$$
\esssup_{\om\in\Om} \|\varphi_\om\|_{BV}<\infty.
$$

The main goal in this paper is to obtain limit theorems for the sequence of functions
$$
S_n=S_n\varphi=\sum_{j=0}^{n-1}\varphi\circ \tau^j
$$
under certain mixing assumptions on the driving system $(\Om,\cF,\bbP,\sigma)$ and the above assumptions on the observable $\varphi$.
\begin{remark}
For expanding on the average maps the scaling condition \eqref{Scale} is necessary for limit theorems, see \cite[Appendix]{DHS}. In any case, our results are also new in the uniformly random  case, and the readers who would prefer can just consider this case together with the assumption that $\esssup_{\om\in\Om}\|\varphi_\om\|_{BV}<\infty$.
\end{remark}

\subsection{Limit theorems}
Let us first introduce our assumptions on the base map.
Let $(\xi_n)$ be a two sided stationary sequence taking values on some measurable space $\cY$. We assume here that $(\Om,\cF,\bbP,\sigma)$ is the corresponding shift system. Namely, $\Omega=\cY^\bbZ$,  $\sig((\om_j)_j)=(\om_{j+1})_j$ is the left shift and if $\pi_0:\Omega\to\cY$ denotes the $0$-th coordinate projection, then $(\xi_n)$ has the same distribution as $(\pi_0\circ\sigma^n)$. We also assume that $T_\om=T_{\om_0}$ and $\varphi(\om,\cdot)=\varphi(\om_0,\cdot)$ depend only on $0$-th coordinate $\om_0$ of $\om$.

\subsubsection{\textbf{Limit theorems for stretched exponentially fast $\alpha$-mixing driving processes}}
\,
\,

Let $(\Om_0,\mathscr F,\textbf{P})$ be the probability space on which $(\xi_n)$ is defined.
We recall that the $\al$-mixing (dependence) coefficient between two sub-$\sig$-algebras $\cG,\cH$ of $\mathscr F$ is given by 
$$
\al(\cG,\cH)=\sup\{|\textbf{P}(A\cap B)-\textbf{P}(A)\textbf{P}(B)|: A\in\cG, B\in\cH\}.
$$
The $\al$-dependence coefficients of $(\xi_n)$ are defined by 
\begin{equation}\label{al def}
\al_n=\sup_{k}\al(\mathscr F_{-\infty,k},\mathscr F_{k+n,\infty})=\al(\mathscr F_{-\infty,0},\mathscr F_{n,\infty})
\end{equation}
where $\mathscr F_{-\infty,k}$ is the $\sigma$-algebra generated by $\xi_j, j\leq k$ and $\mathscr F_{k+n,\infty}$ is generated by $\xi_j, j\geq k+n$. The last equality holds true due to stationarity.
Let us consider the following class of mixing assumptions on the base map:
\begin{assumption}[Stretched exponential $\alpha$ mixing rates]\label{AsAlMix}
There exist positive constants $c_1,c_2$ and $\eta$ so that $\al_n\leq c_1e^{-c_2n^\eta}$ for every $n$.
\end{assumption}

Our first result concerns the variance of $S_n$ and the central limit theorem (with rates).
\begin{theorem}\label{CLT}
Suppose that the cocycle $\cL$ is good.
Let $\varphi$ be an observable so that  $\|\varphi\|_K:=\esssup_{\om\in\Om}(K(\om)\|\varphi_\om\|_{BV})<\infty$, where $\varphi_\om=\varphi(\om,\cdot)$. Suppose that $\sum_n n\al_n<\infty$. Then the limit 
$$
s=\lim_{n\to\infty}n^{-1/2}\|S_n-\bbE[S_n]\|_{L^2(\mu)}
$$
exists and it vanishes if and only if $\varphi=r\circ\tau-r$ for some $r\in L^2(\mu)$. If in addition Assumption \ref{AsAlMix} is satisfied then $n^{-1/2}S_n$ converges in distribution to $sZ$, where $Z$ is a standard normal random variable. Moreover, there is a constant $C>0$ so that for all $n\in\bbN$,
\begin{equation}\label{BErate}
\sup_{t\in\bbR}\left|\mu(S_n-\bbE[S_n]\leq ts\sqrt n)-\Phi(t)\right|\leq Cn^{-\frac1{2+4\gamma}}
\end{equation}
where $\gamma=1/\eta$  and $\Phi$ is the standard normal distribution function. The constant $C$ depends only on $c_1,c_2,\eta$, $\|\varphi\|_K$ and the constant $C_{\text{v}}$ (from the definition of the variation $\text{v}(\cdot)$), and an explicit formula for $C$ can be recovered from the proof.
\end{theorem}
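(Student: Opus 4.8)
The plan is to follow the classical three-stage route for stationary sequences generated by mixing dynamical systems: (i) prove existence of the asymptotic variance $s^2$ together with the coboundary dichotomy using only $\sum_n n\al_n<\infty$; (ii) under the stretched exponential rate of Assumption \ref{AsAlMix}, establish the cumulant bound $|\mathrm{cum}_k(S_n)|\le n(k!)^{1+\gamma}c_0^{k-2}$ for $k\ge 3$ announced in the introduction; and (iii) feed these cumulant estimates into the quantitative CLT machinery (the method of cumulants of Saulis--Statuly\.avi\v{c}ius, as in \cite{Stat,DE}) to read off the Berry--Esseen rate $Cn^{-1/(2+4\gamma)}$. The crucial new analytic ingredient is a \emph{multiple correlation estimate}: for $j_1\le\cdots\le j_k$ one needs
\[
\left|\mu\!\left(\prod_{i=1}^k \varphi\circ\tau^{j_i}\right)-\text{(products over clusters)}\right|\le C^k\,\al_{\Delta}^{\,\kappa}
\]
where $\Delta$ is the largest gap among the $j_i$, with constants controlled by $\|\varphi\|_K$ and $C_0$. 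This is exactly the content the paper isolates in its Proposition \ref{PropMulti}, so in the write-up I would invoke that proposition and Theorem \ref{CumEst} rather than reprove them.

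For step (i), write $S_n=\sum_{j=0}^{n-1}\varphi\circ\tau^j$ and expand $\|S_n\|_{L^2(\mu)}^2=\sum_{|j|<n}(n-|j|)\,c_j$ where $c_j=\mu(\varphi\cdot\varphi\circ\tau^{|j|})$. The heart is to show $\sum_j |c_j|<\infty$; here one uses that $\varphi\circ\tau^j$ depends (up to an exponentially small $BV$-error coming from \eqref{Exp1}--\eqref{Exp2}) only on finitely many coordinates $\om_i$, so that correlation decay of $(\xi_n)$ transfers to $(\varphi\circ\tau^j)$. Concretely, $\mu(\varphi\cdot\varphi\circ\tau^{j})$ is compared to an average over $\om$ of $\int (\cL_\om^{j}(\varphi_\om h_\om))\cdot\varphi_{\sigma^j\om}\,dm$; the random RPF estimate \eqref{Exp2} replaces $\cL_\om^{\lfloor j/2\rfloor}$-type iterates by one-dimensional projections up to $O(K(\om)e^{-\la j/2})$, after which the remaining dependence is through $\al_{\lfloor j/2\rfloor}$-separated coordinates. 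Summability of $c_j$ gives convergence of $\tfrac1n\|S_n\|^2$ to $s^2=c_0+2\sum_{j\ge1}c_j$. The coboundary characterization $s^2=0\iff \varphi=r\circ\tau-r$ with $r\in L^2(\mu)$ is then standard: if $s^2=0$ one builds $r$ as the $L^2$-limit of a telescoping series (Leonov's lemma / the standard stationary-sequence argument), and conversely a coboundary has bounded partial sums so $s^2=0$.

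For step (iii), assuming $s^2>0$, the cumulant bound from step (ii) says the normalized sums $S_n/(s\sqrt n)$ satisfy the hypotheses of the Saulis--Statuly\.avi\v{c}ius theorem with parameter $\gamma=1/\eta$ in the ``$(k!)^{1+\gamma}$'' scale; the general theorem then yields simultaneously the CLT, the Berry--Esseen bound of the claimed order $n^{-1/(2+4\gamma)}$, and (used later in the paper) moderate deviations. I expect the \textbf{main obstacle} to be step (ii), specifically extracting the cumulant bound from the multiple correlation estimate: one must sum $|\mathrm{cum}_k(S_n)|\le \sum_{0\le j_1,\dots,j_k<n} |\text{(centered mixed moment)}|$ over all $k$-tuples, group them by the connected-partition/gap structure, and show the combinatorics produce only a factor $n\,(k!)^{1+\gamma}$ rather than $n^k$ — the $n^{k-1}$ gain is the usual ``a $k$-tuple contributing to the $k$-th cumulant must be connected, hence one index is free and the other $k-1$ are tied up by small gaps'' phenomenon, but here one additionally has to carefully track how the stretched-exponential $\al_n\le c_1e^{-c_2n^\eta}$ interacts with the $k$ nested sums so that the surviving constant is of the stated form $(k!)^{1+\gamma}c_0^{k-2}$; the exponential $BV$-errors from \eqref{Exp1}--\eqref{Exp2} and the scaling norm $\|\varphi\|_K$ must be bookkept at each level so the per-level constant does not blow up with $k$. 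Once this is in hand, everything else is a citation of \cite{Stat,DE} plus the elementary variance computation.
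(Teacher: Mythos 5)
Your proposal follows essentially the same route as the paper: Proposition \ref{PropMulti} gives the correlation decay $|\mu(\varphi\cdot\varphi\circ\tau^n)|=O(\delta^n+\alpha_{[n/3]})$ from which the asymptotic variance and the coboundary dichotomy follow by the standard stationary-sequence theory (\cite{IBR}), and then Theorem \ref{CumEst} fed into \cite[Corollary 2.1]{Stat} yields the CLT with the rate $n^{-1/(2+4\gamma)}$. The one detail you gloss over is that the Saulis--Statulevi\v{c}ius corollary produces the Berry--Esseen bound for $S_n/\sqrt{\mathrm{Var}(S_n)}$ whereas \eqref{BErate} normalizes by $s\sqrt n$; the paper bridges this via Lemma \ref{L3}, which gives $|\tfrac1n\mathrm{Var}(S_n)-s^2|=O(n^{-1})$, combined with \cite[Lemma 3.3]{BE}, so that the change of normalization does not degrade the rate.
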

The proof of  Theorem \ref{CLT} appears in Section \ref{CLT pf}.
As discussed in Sections \ref{skew} and \ref{New}, when the quenched CLT holds true with a deterministic centering, then  the CLT for the skew product follows by integration. This was the approach for the CLT in \cite{ANV}, but in the setup of this paper the function $\varphi$ and the measure $\mu_\om$ depend on $\om$, and so the quenched CLT only holds with fiberwise centering. Thus, the novelty of Theorem \ref{CLT} is that the CLT is obtained for the skew product beyond  the annealed case considered in \cite{ANV}. Moreover, Theorem \ref{BErate} also strengthens the CLT in \cite{ETDS}, since our maps $T_\om$ are not uniformly expanding, and the observable $\varphi$ is not fiberwise centered.

Next, let us discuss our results concerning moderate deviations and exponential concentration inequalities.

\begin{theorem}\label{Thm:ModDevNonc}
Suppose that the cocycle $\cL$ is good, and 
let $\varphi$ be an observable so that $\|\varphi\|_K=\esssup_{\om\in\Om}(K(\om)\|\varphi_\om\|_{BV})<\infty$. Let Assumption \ref{AsAlMix} hold and 
set $\gam=\frac1\eta$. Then there exist constants $a_1,a_2>0$ so that for every $x>0$ and $n\in\bbN$,
 \begin{equation}\label{FirstExpCon}
P(S_n-\bbE[S_n]\geq x)\leq \exp\Big(-\frac{x^2}{2(a_1+a_2xn^{-\frac1{2+4\gam}})^{\frac{1+2\gam}{1+\gam}}}\Big).
\end{equation}
All the constants depend only on $c_1,c_2,\eta$, $\|\varphi\|_K$ and $C_{\text{v}}$ from the definition of the variation $\text{v}(\cdot)$, and an explicit formula for them can be recovered from the proof.
\end{theorem}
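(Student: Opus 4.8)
The plan is to derive \eqref{FirstExpCon} by feeding the cumulant estimates for $S_n$ into the classical inequality that converts Statulevi\v{c}ius-type cumulant bounds into exponential concentration, following \cite{Stat, DE}. The first ingredient is the cumulant bound of Theorem \ref{CumEst}: there is a constant $c_0$, depending only on $c_1,c_2,\eta$, $\|\varphi\|_K$ and $C_0$, so that the $k$-th cumulant $\Gamma_k(S_n)$ of $S_n$ with respect to $\mu$ satisfies $|\Gamma_k(S_n)|\le n\,(k!)^{1+\gamma}c_0^{\,k-2}$ for all $k\ge 3$ and $n\ge 1$. For $k=2$ one invokes Theorem \ref{CLT} (whose variance part only needs $\sum_n n\alpha_n<\infty$, which follows from Assumption \ref{AsAlMix}) to get $\Gamma_2(S_n)=\text{Var}_\mu(S_n)\le C_3 n$. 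Hence, after harmlessly enlarging $c_0$, the centered variable $S_n$ satisfies
$$
|\Gamma_k(S_n)|\le (k!)^{1+\gamma}\,\frac{H_n}{\Delta^{\,k-2}},\qquad k\ge 2,
$$
with variance proxy $H_n\asymp n$ and width parameter $\Delta=1/c_0$ independent of $n$; equivalently, since $\Gamma_k(S_n/\sqrt n)=n^{-k/2}\Gamma_k(S_n)$, the normalized sum $\widehat S_n=S_n/\sqrt n$ satisfies the same condition with a \emph{bounded} variance proxy and width parameter $\widehat\Delta_n\asymp\sqrt n$.

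The second step is to apply the Saulis--Statulevi\v{c}ius concentration inequality (see \cite[Ch.~3]{Stat} and \cite[\S2]{DE}): a centered random variable $\xi$ with $|\Gamma_k(\xi)|\le (k!)^{1+\gamma}H/\Delta^{k-2}$ for all $k\ge 2$ satisfies, for every $y>0$, a bound $\bbP(\xi\ge y)\le\exp\big(-y^{2}/(2D(y))\big)$, where $D(y)$ interpolates between the variance proxy $H$ in the Gaussian zone $y=o\big((H^{1+\gamma}\Delta)^{1/(1+2\gamma)}\big)$ and, beyond it, a power of $y\,\Delta^{-1/(1+2\gamma)}$, the precise algebraic form carrying the exponent $\tfrac{1+2\gamma}{1+\gamma}$ and producing a stretched-exponential large-deviation tail. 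Applying this to $\xi=\widehat S_n$, where $H$ is bounded and $\widehat\Delta_n^{-1/(1+2\gamma)}\asymp n^{-1/(2(1+2\gamma))}=n^{-1/(2+4\gamma)}$, and then rescaling back via $S_n=\sqrt n\,\widehat S_n$, yields a bound of exactly the shape \eqref{FirstExpCon}; here the elementary identity $\tfrac12-\tfrac1{2+4\gamma}=\tfrac\gamma{1+2\gamma}$ is what lets the powers of $n$ collapse into the single factor $n^{-1/(2+4\gamma)}$ inside the bracket. The resulting constants $a_1,a_2$ depend only on $c_0$, hence only on $c_1,c_2,\eta$, $\|\varphi\|_K$ and $C_0$, with an explicit value readable from \cite{Stat} together with the proof of Theorem \ref{CumEst}.

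Given Theorem \ref{CumEst}, the argument above is essentially bookkeeping, and I expect the only delicate points to be (i) folding the $k=2$ term into the hypothesis so that the Saulis--Statulevi\v{c}ius lemma applies uniformly over $k\ge 2$, and (ii) tracking the substitutions $H_n\asymp n$ and $\widehat\Delta_n\asymp\sqrt n$ through the exponents, keeping all constants uniform in $n$ and $y$, so that \eqref{FirstExpCon} comes out verbatim. The genuinely hard step lies upstream: proving the multiple-correlation estimates of Proposition \ref{PropMulti} for stretched-exponentially $\alpha$-mixing driving processes and deducing from them the cumulant bound of Theorem \ref{CumEst} --- this is where the exponential loss of memory of the random transfer operators (Assumption \ref{StanAss}) is played off against the stretched-exponential mixing of the base (Assumption \ref{AsAlMix}).
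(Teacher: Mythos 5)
Your proposal is correct and follows exactly the paper's route: the paper derives \eqref{FirstExpCon} in one line from the cumulant bound of Theorem \ref{CumEst} together with the Saulis--Statulevi\v{c}ius concentration lemma (\cite[Lemma 2.3]{Stat}), which is precisely the combination you describe, including the normalization bookkeeping $H_n\asymp n$, $\Delta\asymp 1$ (equivalently $\widehat\Delta_n\asymp\sqrt n$ after rescaling). Your identification of the genuinely hard step as Proposition \ref{PropMulti} feeding into Theorem \ref{CumEst} also matches the structure of the paper.
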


We will also prove the following:

\begin{theorem}\label{Thm2.4}
Suppose that the cocycle $\cL$ is good, and
let $\varphi$ be an observable so that $\|\varphi\|_K=\esssup_{\om\in\Om}(K(\om)\|\varphi_\om\|_{BV})<\infty$. Let Assumption \ref{AsAlMix} hold and set $\gam=\frac1\eta$. Let us also assume that the asymptotic variance $s^2$ is positive.
\vskip0.2cm
(i) Set 
$v_n=\sqrt{\mathrm{Var}(S_n)}$ and when $v_n>0$ also set
$Z_n=\frac{S_n-\bbE[S_n]}{v_n}$.
Let $\Phi$ be the standard normal distribution function. 
Then  there exist 
constants $s_3,s_4,s_5>0$ 
so that for every $n\geq a_3$ we have $v_n>0$ and for every $0\leq x<a_4 n^{\frac1{2+4\gam}}$,
\begin{eqnarray}\label{ModDev3}
\left|\ln\frac{P(Z_n\geq x)}{1-\Phi(x)}\right|\leq a_5(1+x^3)
n^{-\frac1{2+4\gam}}\,\,\text{ and}\\
\left|\ln\frac{P(Z_n\leq -x)}{\Phi(-x)}\right|\leq a_5(1+x^3)
n^{-\frac1{2+4\gam}}.\nonumber
\end{eqnarray}
The constants $a_4,a_5$ depend only on $c_1,c_2,\eta$, $\|\varphi\|_K$ and $C_{\text{v}}$, and an explicit formula for them can be recovered from the proof.
\vskip0.2cm

(ii) Let $a_n,\,n\geq 1$ be a sequence of real numbers so that 
\[
\lim_{n\to\infty}a_n=\infty\,\,\text{ and }\,\,\lim_{n\to\infty}{a_n}{n^{-\frac{1}{2+4\gam}}}=0.
\]
Then the sequence  $W_n=(sn^{\frac12}a_n)^{-1}S_n,\,n\geq1$ satisfies the moderate deviations principle
with  speed $s_n=a_n^2$ and the rate function $I(x)=\frac{x^2}2$. Namely, for every Borel measurable set $\Gamma\subset\bbR$,
$$
-\inf_{x\in\Gamma^o}I(x)\leq \liminf_{n\to\infty}\frac 1{a_n^2}\ln\mu(W_n\in\Gamma)\leq \limsup_{n\to\infty}\frac 1{a_n^2}\ln\mu(W_n\in\Gamma)\leq -\inf_{x\in\overline{\Gamma}}I(x)
$$
where $\Gamma^o$ is the interior of $\Gamma$ and $\overline{\Gamma}$ is its closure.
\end{theorem}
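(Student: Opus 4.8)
The plan is to squeeze the entire statement out of the cumulant bounds that are already in hand, and to read off the moderate deviations principle from the Cram\'er-type estimate rather than directly from a Laplace-transform argument. First I would record the two inputs. Since $\int\varphi\,d\mu=0$ and $\mu$ is $\tau$-invariant we have $\mu(S_n)=0$, so $v_n^2=\mathrm{Var}_\mu(S_n)=\|S_n\|_{L^2(\mu)}^2$, and Theorem~\ref{CLT} gives $n^{-1}v_n^2=\big(n^{-1/2}\|S_n\|_{L^2(\mu)}\big)^2\to s^2$; under the standing hypothesis $s^2>0$ this already yields $v_n>0$ for all $n\ge a_3$, together with $v_n^2\ge c_* n$ and $v_n\sim s\sqrt n$, which are used below. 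Second, Theorem~\ref{CumEst} (based on the multiple correlation estimates of Proposition~\ref{PropMulti}) gives $|\Gamma_k(S_n)|\le A\,n\,(k!)^{1+\gam}c_0^{\,k-2}$ for every $k\ge 3$, with $\gam=1/\eta$. Using $\Gamma_k(Z_n)=\Gamma_k(S_n)/v_n^{\,k}$ and $v_n^2\ge c_*n$, a one-line computation turns this into
$$
\Gamma_2(Z_n)=1,\qquad |\Gamma_k(Z_n)|\le\frac{(k!)^{1+\gam}}{\Delta_n^{\,k-2}}\quad(k\ge3),\qquad \Delta_n=d_0\sqrt n,
$$
for a suitable $d_0>0$; the extra factor $A/c_*$ coming from the normalization is harmless because for $k\ge3$ it can be absorbed into $\Delta_n$ (replace $\Delta_n$ by $\Delta_n/(A/c_*)$ when $A/c_*\ge1$). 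Thus $Z_n$ satisfies the Saulis--Statulevi\v{c}ius cumulant condition with parameter $\gam$ and scale $\Delta_n=d_0\sqrt n$.

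For the Cram\'er-type estimate I would then invoke the moderate deviation theorem for sequences obeying that condition (see \cite{Stat, DE}): for $0\le x<c_\gam\Delta_n^{1/(1+2\gam)}$,
$$
\left|\ln\frac{P(Z_n\ge x)}{1-\Phi(x)}\right|\le\frac{C_\gam(1+x^3)}{\Delta_n^{1/(1+2\gam)}},
$$
and, applying the same theorem to $-S_n$ (whose cumulants have the same absolute values), the analogous bound for $P(Z_n\le -x)/\Phi(-x)$. Since $\Delta_n^{1/(1+2\gam)}=d_0^{1/(1+2\gam)}n^{1/(2+4\gam)}$, this is exactly \eqref{ModDev3} with $a_4=c_\gam d_0^{1/(1+2\gam)}$ and $a_5=C_\gam d_0^{-1/(1+2\gam)}$; tracking the constants back through Theorem~\ref{CumEst} shows they depend only on $c_1,c_2,\eta,\|\varphi\|_K,C_0$. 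I expect the only genuinely delicate point in this step to be bookkeeping: verifying that the multiplicative constant produced by the normalization $Z_n=S_n/v_n$ is really absorbable into $\Delta_n$, and that the polynomial-in-$x$ factor handed over by the cited theorem is $\le$ a multiple of $1+x^3$ throughout the admissible range (for large $\gam$ the natural exponent $2+1/\gam$ is $\le 3$; for small $\gam$ one must check that the Cram\'er-series correction contributes only an $O(x^3)$-term there).

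For the MDP I would deliberately avoid a direct Gärtner--Ellis argument, since with cumulants of size $(k!)^{1+\gam}$, $\gam>0$, the moment generating function of $S_n$ may fail to be finite near $0$; instead the MDP is extracted from \eqref{ModDev3} (alternatively one may quote the MDP statement of \cite{DE} directly). Write $W_n=S_n/(s\sqrt n\,a_n)$, so that $\{W_n\ge y\}=\{Z_n\ge x_n\}$ with $x_n=y\,a_n\,s\sqrt n/v_n$ and $x_n/a_n\to y$. Fix $y>0$: since $a_n\to\infty$ we have $x_n\to\infty$, and since $a_nn^{-1/(2+4\gam)}\to0$ we have $x_n<a_4 n^{1/(2+4\gam)}$ for all large $n$, so \eqref{ModDev3} applies and, combined with the Gaussian tail asymptotics $\ln(1-\Phi(x))=-\tfrac12x^2+O(\log x)$,
$$
\ln P(W_n\ge y)=-\tfrac12 x_n^2+O(\log x_n)+O\!\big((1+x_n^3)n^{-\frac1{2+4\gam}}\big)=-\tfrac12 x_n^2+o(a_n^2),
$$
where the last equality uses $x_n^3 n^{-1/(2+4\gam)}\asymp y^3a_n^2\cdot\big(a_nn^{-1/(2+4\gam)}\big)=o(a_n^2)$. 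Dividing by $a_n^2$ gives $\lim_n a_n^{-2}\ln P(W_n\ge y)=-y^2/2$, and symmetrically $\lim_n a_n^{-2}\ln P(W_n\le -y)=-y^2/2$; in particular $\limsup_n a_n^{-2}\ln\mu(|W_n|\ge M)\le -M^2/2\to-\infty$, so $(W_n)$ is exponentially tight at speed $a_n^2$. The full MDP with the good rate function $I(x)=x^2/2$ then follows from these one-sided limits by the standard soft argument (cf.\ \cite{DemZet}): for an open $G$ and $y\in G$ with $y>0$, pick $\delta$ with $(y-\delta,y+\delta)\subset G$ and bound $\mu(W_n\in G)\ge\mu(W_n\ge y-\delta)-\mu(W_n\ge y+\delta)$, the first term dominating exponentially (the cases $y<0$ and $0\in G$ being symmetric/trivial); for a closed $F$, split $F$ at the origin and bound each half by a single half-line probability. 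This yields $-\inf_{\Gamma^o}I\le\liminf_n a_n^{-2}\ln\mu(W_n\in\Gamma)\le\limsup_n a_n^{-2}\ln\mu(W_n\in\Gamma)\le-\inf_{\overline\Gamma}I$ for every Borel $\Gamma$, which is the assertion. The main obstacle, as indicated, is not conceptual but a careful matching of constants and admissible ranges between Theorem~\ref{CumEst}, the normalization, and the cited cumulant-method theorems.
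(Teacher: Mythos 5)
Your proposal is correct and follows essentially the same route as the paper: normalize the cumulant bound of Theorem \ref{CumEst} to put $Z_n$ in the Saulis--Statulevi\v{c}ius framework with $\Delta_n\asymp\sqrt n$, read off \eqref{ModDev3} from the cumulant-method Cram\'er estimate of \cite{Stat,DE}, and obtain the MDP at speed $a_n^2$. The only difference is cosmetic: the paper quotes \cite[Theorem 1.1]{DE} directly for the MDP, whereas you re-derive it from \eqref{ModDev3} via Gaussian tail asymptotics and exponential tightness, which is a valid (and more self-contained) alternative you also acknowledge.
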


We also obtain the following Roesenthal type moments estimates.

\begin{theorem}\label{MomThm}
Suppose that $\cL$ is a good cocycle.
If $\|\varphi\|_K=\esssup_{\om\in\Om}(K(\om)\|\varphi_\om\|_{BV})<\infty$, then under  Assumption \ref{AsAlMix}, there exist a constant $c_0$ so that with $\gamma=1/\eta$ 
 for every integer $p\geq1$ we have 
\begin{equation}\label{Rosen}
\big|\bbE[(S_n-\bbE[S_n])^p]-(\mathrm{Var}(S_n))^{\frac p2}\bbE[Z^p]\big|\leq 
(c_{0})^p(p!)^{1+\gam}\sum_{1\leq u\leq\frac {p-1}2}n^u\frac{p^u}{(u!)^2}=O(n^{[(p-1)/2]})
\end{equation}
where  $Z$ be a standard normal random variable. In particular, $\|S_n-\bbE[S_n]\|_{L^p}=O(\sqrt n)$ for every $p$.
As in the previous theorems, the constant $c_0$ depends (explicitly) only on  on $c_1,c_2,\eta$, $\|\varphi\|_K$ and $C_{\text{v}}$.
\end{theorem}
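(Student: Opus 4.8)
The plan is to deduce \eqref{Rosen} from the cumulant estimates of Theorem \ref{CumEst} by means of the classical moment--cumulant relations, i.e. the method of cumulants of \cite{Stat, DE}. Write $C_k=C_k(S_n)$ for the $k$-th cumulant of $S_n$. The moment--cumulant (partition) formula gives $\bbE[(S_n)^p]=\sum_{\pi}\prod_{B\in\pi}C_{|B|}$, where $\pi$ ranges over all set partitions of $\{1,\dots,p\}$ and $|B|$ denotes the cardinality of a block $B\in\pi$. Since $\int\varphi\,d\mu=0$ and $\mu$ is $\tau$-invariant, $C_1=\bbE[S_n]=0$, so every partition containing a singleton block contributes $0$; only partitions all of whose blocks have size at least $2$ survive.

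Next I would isolate the Gaussian main term. Each partition into blocks of size exactly $2$ (a perfect matching of $\{1,\dots,p\}$) contributes $\prod_{B\in\pi}C_2=(\mathrm{Var}(S_n))^{p/2}$, and the number of such partitions is $(p-1)!!$ for even $p$ and $0$ for odd $p$ --- which is exactly $\bbE[Z^p]$. Consequently $\bbE[(S_n)^p]-(\mathrm{Var}(S_n))^{p/2}\bbE[Z^p]=\sum_{\pi}^{*}\prod_{B\in\pi}C_{|B|}$, where $\sum^{*}$ runs over the partitions with all blocks of size $\ge2$ and at least one block of size $\ge3$. Such a partition with $u=|\pi|$ blocks satisfies $2(u-1)+3\le p$, i.e. $1\le u\le[(p-1)/2]$, which accounts for the range of $u$ in \eqref{Rosen}.

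It remains to estimate $\sum^{*}$ termwise. By Theorem \ref{CumEst}, and using $\mathrm{Var}(S_n)\le cn$ (from the variance analysis of Theorem \ref{CLT}) to cover $k=2$ after possibly enlarging $c_0$, we may assume $c_0\ge1$ and $|C_k|\le n(k!)^{1+\gam}(c_0)^{k-2}$ for every $k\ge2$. For a partition with block sizes $k_1,\dots,k_u$ (so $\sum_i k_i=p$ and each $k_i\ge2$) this gives $\prod_i|C_{k_i}|\le n^{u}(c_0)^{p-2u}\prod_i(k_i!)^{1+\gam}$. Writing $(k_i!)^{1+\gam}=(k_i!)^{\gam}k_i!$ and using $\prod_i k_i!\le p!$ we obtain $\prod_i(k_i!)^{1+\gam}\le(p!)^{\gam}\prod_i k_i!$. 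The number of set partitions of $\{1,\dots,p\}$ into $u$ blocks, weighted by $\prod_i k_i!$, is at most the Lah number $L(p,u)=\binom{p-1}{u-1}\frac{p!}{u!}\le p!\,p^{u}/(u!)^{2}$ (the restriction to blocks of size $\ge2$ only drops terms). Hence $\sum_{\pi:\,|\pi|=u}\prod_i(k_i!)^{1+\gam}\le(p!)^{1+\gam}p^{u}/(u!)^{2}$, and summing over $1\le u\le[(p-1)/2]$ while bounding $(c_0)^{p-2u}\le(c_0)^{p}$ yields precisely $(c_0)^p(p!)^{1+\gam}\sum_{1\le u\le[(p-1)/2]}n^u p^u/(u!)^2$, as claimed in \eqref{Rosen}; alternatively one may invoke directly the general Rosenthal-type estimate of \cite{Stat, DE}, whose hypothesis is exactly the cumulant bound just verified. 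Since the $u=[(p-1)/2]$ term dominates, the right-hand side is $O(n^{[(p-1)/2]})$. Finally, combining \eqref{Rosen} with $(\mathrm{Var}(S_n))^{p/2}=O(n^{p/2})$ and $\bbE[Z^p]<\infty$ gives $\bbE[(S_n)^{2m}]=O(n^{m})$ for every $m\in\bbN$, hence $\|S_n\|_{L^{2m}}=O(\sqrt n)$, and $\|S_n\|_{L^{p}}=O(\sqrt n)$ for all $p\ge1$ by monotonicity of $L^{p}$-norms. The stated dependence of $c_0$, and thus of all constants, only on $c_1,c_2,\eta,\|\varphi\|_K,C_0$ is inherited from Theorem \ref{CumEst}.

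\emph{Main obstacle.} The only genuinely substantial input is Theorem \ref{CumEst} itself (which in turn rests on the multiple correlation estimates of Proposition \ref{PropMulti}), and this is taken as already established; the remaining work is purely combinatorial. The points requiring care are that the pair-partition contributions reproduce \emph{exactly} $(\mathrm{Var}(S_n))^{p/2}\bbE[Z^p]$ and nothing more, and that the partition count is organized (via the Lah number bound, or equivalently the general lemma of \cite{Stat, DE}) so that the constants emerge in the precise form asserted --- both routine once the cumulant bound is in hand.
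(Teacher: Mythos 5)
Your proposal is correct and follows essentially the same route as the paper: the paper deduces Theorem \ref{MomThm} from the cumulant bound of Theorem \ref{CumEst} by appealing to the general moment--cumulant (Rosenthal-type) estimates of \cite{Stat, DE}, which is exactly the reduction you perform. The only difference is that you write out the combinatorics (pair partitions giving the Gaussian term, Lah-number count for the remainder, and the separate treatment of $k=2$ via $\mathrm{Var}(S_n)\le cn$) that the paper delegates to the cited references, and these details check out.
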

We remark that Theorem \ref{MomThm} provides another proof of the CLT by the method of moments. Indeed, if $s^2>0$ then it follows that for every integer $p\geq1$ the $p$-th moment of $(S_n-\bbE[S_n])n^{-1/2}s^{-1}$ converges to $\bbE[Z^p]$, where $s^2$ is the asymptotic variance. In fact, for even $p$'s we get the convergence rate $O(n^{-1/2})$, while for odd $p$'s we get the rate $O(n^{-1})$.  

\begin{remark}
The proofs of Theorems \ref{Thm:ModDevNonc},  \ref{Thm2.4} and \ref{MomThm} appear in Section \ref{MDP pf}.

Theorems \ref{Thm:ModDevNonc},  \ref{Thm2.4} and \ref{MomThm} are well established for sufficiently fast mixing (in the probabilistic sense) sequences of random variables, where one of the most notable methods of proof is the, so-called, method of cumulants (see \cite{Stat}). 
For random dynamical systems, a  moderate deviations principle was obtained in \cite{DH1}, using a random complex Perron-Frobenius theorem.  In the setup of \cite{ANV}, annealed (local) large deviations principles and exponential concentration inequalities were obtained for iid maps, and we expect that for independent maps the methods in \cite{ANV} will yield results like Theorems \ref{Thm:ModDevNonc}, \ref{Thm2.4} and \ref{MomThm} as well.
The novelty in Theorems \ref{Thm:ModDevNonc},  \ref{Thm2.4} and \ref{MomThm} is that we show how to apply the method of cumulants in the context of skew products with non independent fiber maps, which results in concentration inequalities, moderate deviations principles and Guassian moment estimates  beyond the annealed setup \cite{ANV}.
\end{remark}

Finally,  let us consider the random function $\cS_n(t)=n^{-1/2}\big(S_{[nt]}-\bbE[S_{nt}]\big)$ on $[0,1]$. We also obtain a  functional CLT.
\begin{theorem}\label{FCLT}
Let $\cL$ be a good cocycle.
Suppose that $\esssup_{\om\in\Om}(K(\om)\|\varphi_\om\|_{BV})<\infty$ and that Assumption \ref{AsAlMix} holds true. Then the random function $\cS_n$ converges in distribution towards the distribution of $\{sW_t\}$ where $W$ is a standard Brownian motion (restricted to $[0,1]$) and $s^2$ is the asymptotic variance.
\end{theorem}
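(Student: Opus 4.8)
The plan is to verify the two ingredients of the standard weak convergence criterion in the Skorokhod space $D[0,1]$: convergence of the finite-dimensional distributions of $\cS_n$ to those of $\{sW_t\}$, and tightness of the laws of $\cS_n$. A preliminary observation I would record first is that $\varphi$ is uniformly bounded: since $\sup|f|\le C_0(v(f)+\|f\|_{L^1(m)})$ and $K(\om)\ge1$, one has $\sup_x|\varphi_\om(x)|\le C_0\|\varphi_\om\|_{BV}\le C_0\|\varphi\|_K$ for $\bbP$-a.e.\ $\om$, so the maximal jump of the c\`adl\`ag path $t\mapsto\cS_n(t)$ is at most $n^{-1/2}C_0\|\varphi\|_K\to0$, which removes the usual complications caused by fixed discontinuities. (If $s=0$ the target is the zero process and the statement follows a fortiori from the moment bounds used for tightness, so I would concentrate on the generic case.)

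For the finite-dimensional distributions I would fix $0\le t_1<\dots<t_k\le1$ and, by the Cram\'er--Wold device, reduce to showing $\sum_{i=1}^k\la_i\cS_n(t_i)\Rightarrow\sum_{i=1}^k\la_i sW_{t_i}$ for each $\la\in\bbR^k$. Regrouping the summands by index and using $\tau$-invariance of $\mu$,
\begin{equation*}
\sqrt n\sum_{i=1}^k\la_i\cS_n(t_i)=\sum_{j=0}^{[nt_k]-1}c_j^{(n)}\varphi\circ\tau^j,\qquad c_j^{(n)}=\sum_{i:\,[nt_i]>j}\la_i,
\end{equation*}
where the weights $c_j^{(n)}$ are block-constant (constant on each $[[nt_{i-1}],[nt_i])$, with $t_0=0$) and bounded by $\sum_i|\la_i|$ uniformly in $n,j$. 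The point is that the multiple correlation estimates of Proposition \ref{PropMulti} and the cumulant bound of Theorem \ref{CumEst} are insensitive to such bounded block-constant weights: inserting factors $c_{j_1}^{(n)}\cdots c_{j_r}^{(n)}$ multiplies the $r$-th cumulant of $\sum_jc_j^{(n)}\varphi\circ\tau^j$ by at most $(\sum_i|\la_i|)^r$, so for $r\ge3$ it remains $O(n\,(r!)^{1+\gam}c_0^{r-2})$ and hence the $r$-th cumulant of $n^{-1/2}\sum_jc_j^{(n)}\varphi\circ\tau^j$ is $O(n^{1-r/2})\to0$. For the second cumulant I would expand the variance and invoke the summable decay of correlations guaranteed by Assumption \ref{AsAlMix}, exactly as in the proof of the variance limit in Theorem \ref{CLT}, to obtain $n^{-1}\mathrm{Var}\big(\sum_i\la_iS_{[nt_i]}\varphi\big)\to s^2\sum_{i,i'}\la_i\la_{i'}\min(t_i,t_{i'})=\mathrm{Var}\big(\sum_i\la_i sW_{t_i}\big)$, the cross-block covariances being $O(1)$ while each diagonal block contributes $s^2(t_i-t_{i-1})$. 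The method of cumulants then forces asymptotic normality with the correct variance, and Cram\'er--Wold closes the finite-dimensional step.

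For tightness I would use Theorem \ref{MomThm}: by $\tau$-invariance, for every integer $q\ge1$ there is $C_q$ with $\bbE[|S_{k+m}\varphi-S_k\varphi|^{2q}]=\bbE[|S_m\varphi|^{2q}]\le C_q m^q$ for all $k,m$, hence
\begin{equation*}
\bbE\big[|\cS_n(t)-\cS_n(s)|^{2q}\big]\le C_q\Big(\frac{[nt]-[ns]}{n}\Big)^{q},\qquad 0\le s\le t\le1.
\end{equation*}
Using $[nt]-[ns]\le 2n(t-s)$ whenever the interval $(s,t]$ meets the lattice $\{k/n\}$ (and a null increment otherwise), together with Cauchy--Schwarz to control the joint moment of two adjacent increments, one lands on a standard moment criterion for tightness of step-function partial-sum processes in $D[0,1]$ for a suitable $q$ (e.g.\ $q=2$). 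Combined with the vanishing of the maximal jump of $\cS_n$ noted above, this yields tightness of $(\cS_n)$, and together with the finite-dimensional convergence one concludes $\cS_n\Rightarrow sW$ in $D[0,1]$.

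I expect the main obstacle to be the finite-dimensional step: specifically the bookkeeping that makes the cumulant machinery of Section \ref{SA} apply to the block-weighted sums $\sum_jc_j^{(n)}\varphi\circ\tau^j$ uniformly in $n$ and in the chosen partition, and the verification that the limiting covariance is precisely that of a scaled Brownian motion---which, like the existence of $s^2$ itself, rests on the summability of correlations furnished by Assumption \ref{AsAlMix}. The tightness step should be routine given the Rosenthal-type bounds of Theorem \ref{MomThm}; the only delicate point there is the mild lattice/c\`adl\`ag bookkeeping, which is harmless here precisely because $\varphi$ is bounded and the jumps of $\cS_n$ are $O(n^{-1/2})$.
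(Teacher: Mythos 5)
Your proposal is correct and follows essentially the same route as the paper: finite-dimensional convergence via Cram\'er--Wold and the method of cumulants applied to the block-constant-weighted sums $\sum_j c_j^{(n)}\varphi\circ\tau^j$ (for which the multiple correlation estimates of Proposition \ref{PropMulti} and hence the cumulant bounds of Theorem \ref{CumEst} carry over), plus tightness from the fourth-moment bound $\|S_n\|_{L^4}=O(\sqrt n)$ of Theorem \ref{MomThm} combined with the Billingsley-type criterion on products of adjacent increments. The only cosmetic difference is that you write the limiting covariance as $s^2\sum_{i,i'}\la_i\la_{i'}\min(t_i,t_{i'})$ while the paper uses the equivalent telescoped form $s^2\sum_j(a_j+\cdots+a_d)^2(t_j-t_{j-1})$.
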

\begin{remark}
The proof of Theorem \ref{FCLT} appears in Section \ref{FCLT sec}.
In \cite{ANV} an almost sure invariance principle (ASIP) was obtained, which yields the functional CLT. In the next section, using different mixing coefficients for the base map we will obtain an ASIP for the more general skew products considered in this paper. However Theorem \ref{FCLT} shows that the functional CLT already holds true for stretched exponential $\al$ mixing base maps.
\end{remark}

\subsubsection{\textbf{An almost sure invariance principle and exponential concentration inequalities for $\phi$ and $\psi$-mixing driving processes (via martingale methods)}}
\,
\,

Let $(\Om_0,\mathscr F,\textbf{P})$ be the probability space on which $(\xi_n)$ is defined.
We recall that the $\phi$-mixing  and $\psi$ (dependence) coefficient between two sub-$\sig$-algebras $\cG,\cH$ of $\mathscr F$ is given by 
$$
\phi(\cG,\cH)=\sup\left\{|\textbf{P}(B|A)-\textbf{P}(B)|: A\in\cG, B\in\cH, \textbf{P}(A)>0\right\}
$$
and 
$$
\psi(\cG,\cH)=\sup\left\{\left|\frac{\textbf{P}(A\cap B)}{\textbf{P}(A)\textbf{P}(B)}-1\right|: A\in\cG, B\in\cH, \textbf{P}(A)\textbf{P}(B)>0\right\}.
$$
The reverse $\phi$-mixing coefficients of $(\xi_n)$ are defined by 
\begin{equation}\label{phi def}
\phi_{n,R}=\sup_{k}\phi(\mathscr F_{k+n,\infty},\mathscr F_{-\infty,k})=\phi(\mathscr F_{n,\infty},\mathscr F_{-\infty,0})
\end{equation}
while the $\psi$-mixing coefficients of $(\xi_n)$ are defined by 
\begin{equation}\label{psi def}
\psi_{n}=\sup_{k}\psi(\mathscr F_{-\infty,k},\mathscr F_{k+n,\infty})=\psi(\mathscr F_{-\infty,0},\mathscr F_{n,\infty})
\end{equation}
where $\mathscr F_{-\infty,k}$ is the $\sigma$-algebra generated by $\xi_j, j\leq k$ and $\mathscr F_{k+n,\infty}$ is generated by $\xi_j, j\geq k+n$.
It is clear from the definitions of the mixing coefficients that 
$$
\al_n\leq \phi_{n,R}\leq \psi_n.
$$

\begin{theorem}[Exponential concentration and maximal inequalities]\label{Mart}
Let $\cL$ be a good cocycle.
Suppose the observable satisfies that $\esssup_{\om\in\Om}(K(\om)^2\|\varphi_\om\|_{BV})<\infty$. 

Let $\cF_0$ be the $\sigma$ algebra generated by  the map $\pi(\om,x)=((\om_j)_{j\geq 0},x)$, namely the one generated by  $\cB$ and the coordinates with non-negative indexes in the $\om$ direction. 
If either $\text{ess-inf}\inf_{x} h_\om(x)>0$ and $\sum_{n}\phi_{n,R}<\infty$ or $\sum_{n}\psi_{n}<\infty$ then there is an $\cF_0$-measurable function $\chi\in L^\infty(\mu)$ so that if we set $u=\varphi+\chi\circ\tau-\chi$ then $(u\circ\tau^n)$ is a reverse martingale difference with respect to the  reverse filtration $\{\tau^{-n}\cF_0\}$. As a consequence:
\vskip0.2cm
(i) There are constants $a_1,a_2,a_3>0$  so that the following exponential concentration inequality holds true: for every $t>0$ we have 
\begin{equation}\label{ExConc}
\bbP(|S_n-\bbE[S_n]|\geq tn+a_1)\leq a_2e^{-a_3 nt^2}.
\end{equation}
The constants $a_1,a_2,a_3$ depend  only on  $\tilde\Phi=\sum_{n}\phi_{n,R}<\infty$ and $c$ (or $\tilde\Psi=\sum_{n}\psi_{n}<\infty$), the constant $C_{\text{v}}$ and $\|\varphi\|_{K,2}=\esssup_{\om\in\Om}(K(\om)^2\|\varphi_\om\|_{BV})$, and an explicit formula for them can be recovered from the proof.
\vskip0.2cm

(ii) For every $p\geq 2$ we have 
\begin{equation}\label{MomMart}
\left\|\max_{1\leq k\leq n}|S_k-\bbE[S_k]|\right\|_{L^p}\leq C_pn^{1/2}
\end{equation}
where $C_p>0$ is a a constant (which can be recovered from the proof and depends only on $p$ and the above constants).
\end{theorem}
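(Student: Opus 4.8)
\textbf{Step 1: Constructing the corrector $\chi$ via $L^\infty$-summability of the transfer operator iterates.}
The plan is to work with the factor system $(\Om\times X,\cF_0,\mu,\tau)$ and its transfer (Koopman-dual) operator $\cK$ on $L^1(\mu)$, so that $\cK$ is the adjoint of $g\mapsto g\circ\tau$ restricted to $\cF_0$-measurable functions. The key estimate, to be proved separately (as promised in Section \ref{Sketch}), is that under either of the two mixing hypotheses one has a geometric-type bound
$$
\|\cK^n\varphi-\mu(\varphi)\textbf{1}\|_{L^\infty(\mu)}\leq C\rho_n,\qquad \sum_{n\geq1}\rho_n<\infty,
$$
where $\rho_n$ is controlled by the mixing coefficients $\phi_{n,R}$ (resp.\ $\psi_n$) together with the fiberwise exponential contraction \eqref{Exp2} and the scaling assumption $\esssup_\om(K(\om)^2\|\varphi_\om\|_{BV})<\infty$; the extra power of $K$ and the hypothesis \eqref{h} are needed precisely to pass the bounded-variation control through the change of density between $\cL_\om$ and $L_\om$. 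Since $\mu(\varphi)=0$, this makes $\chi:=\sum_{n\geq1}\cK^n\varphi$ converge absolutely in $L^\infty(\mu)$, and $\chi$ is $\cF_0$-measurable because each $\cK^n\varphi$ is. Setting $u=\varphi+\chi\circ\tau-\chi$, a telescoping computation gives $\cK u=\cK\varphi+\chi-\cK\chi=\cK\varphi-\cK\varphi=0$ (using $\cK(\chi\circ\tau)=\chi$ on the factor), so $u\in L^\infty$ lies in the kernel of $\cK$; by the standard duality characterization (see \cite{Zemer}), $\bbE_\mu[u\mid\tau^{-1}\cF_0]=0$, which is exactly the statement that $(u\circ\tau^n)_{n\geq0}$ is a reverse martingale difference sequence for the decreasing filtration $\{\tau^{-n}\cF_0\}$.

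\textbf{Step 2: From the martingale-coboundary decomposition to the concentration inequality (i).}
Write $S_n\varphi=M_n+\chi-\chi\circ\tau^n$ where $M_n=\sum_{j=0}^{n-1}u\circ\tau^j$. The coboundary term is bounded by $2\|\chi\|_{L^\infty}$ pointwise, which will be absorbed into the additive constant $a_1$. For $M_n$ I would reverse the time index: $(u\circ\tau^{n-1-k})_{0\leq k\leq n-1}$ is a martingale difference array for an increasing filtration, each term bounded by $\|u\|_{L^\infty}\leq\|\varphi\|_{L^\infty}+2\|\chi\|_{L^\infty}$ in absolute value. The Azuma--Hoeffding inequality then gives $\bbP(|M_n|\geq sn)\leq 2e^{-s^2 n/(2\|u\|_{L^\infty}^2)}$; combined with the pointwise coboundary bound and Chernoff's bounding method this yields \eqref{ExConc} with $a_1=2\|\chi\|_{L^\infty}$, $a_2=2$, $a_3=(2\|u\|_{L^\infty}^2)^{-1}$, all of which are controlled by $C_0$, $\|\varphi\|_{K,2}$ and $\tilde\Phi$ (or $\tilde\Psi$) through Step 1.

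\textbf{Step 3: The maximal moment bound (ii).}
Again decompose $\max_{1\leq k\leq n}|S_k|\leq\max_{1\leq k\leq n}|M_k|+2\|\chi\|_{L^\infty}$. For the martingale maximum, Doob's $L^p$ maximal inequality reduces matters to $\|M_n\|_{L^p}$, and then the Burkholder--Davis--Gundy inequality bounds this by $C_p\big\|\big(\sum_{j=0}^{n-1}(u\circ\tau^j)^2\big)^{1/2}\big\|_{L^p}\leq C_p\|u\|_{L^\infty}\sqrt n$ since the quadratic variation is at most $n\|u\|_{L^\infty}^2$. Adding the bounded coboundary term (which contributes an $O(1)$, hence $O(\sqrt n)$, summand) gives \eqref{MomMart}.

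\textbf{Main obstacle.} The genuine content is Step 1: establishing $\sum_n\|\cK^n\varphi-\mu(\varphi)\textbf{1}\|_{L^\infty(\mu)}<\infty$. The difficulty is that $\cK$ is not quasi-compact and there is no spectral gap at the $L^\infty$ level; one must instead combine the fiberwise Ruelle--Perron--Frobenius contraction \eqref{Exp2} (which operates on one variable, with random rate governed by the tempered $K$) with the base-map mixing to decouple the two directions, and the bounded-variation norm must survive conditioning on $\cF_0$ and the density change controlled by \eqref{h} — this is where the stronger scaling hypothesis with $K(\om)^2$ enters. Everything downstream (Steps 2--3) is the standard Azuma/Doob/BDG toolkit applied to a bounded reverse-martingale difference sequence.
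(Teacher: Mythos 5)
Your overall architecture coincides with the paper's: the heart of the matter is the summable $L^\infty$ decay of $\cK^n\varphi$ (the paper's Lemma \ref{MainL}(i)), after which one sets $\chi=\sum_{n\ge1}\cK^n\varphi$ and runs Azuma--Hoeffding with Chernoff's bounding method for (i) and a martingale maximal inequality for (ii). Two remarks on the parts you do carry out. First, your verification that $\cK u=0$ contains a sign slip: with $\chi=\sum_{n\ge1}\cK^n\varphi$ one has $\cK\chi=\chi-\cK\varphi$ and $\cK(\chi\circ\tau)=\chi$, so for $u=\varphi+\chi\circ\tau-\chi$ you get $\cK u=\cK\varphi+\chi-(\chi-\cK\varphi)=2\cK\varphi$, not $0$; the martingale part is $u=\varphi+\chi-\chi\circ\tau$ (equivalently, take $\chi=-\sum_{n\ge1}\cK^n\varphi$ in the stated formula). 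The same sign appears in the paper's statement, so this is cosmetic, but your ``telescoping computation'' as written does not give $0$. Second, for (ii) you use Doob plus Burkholder--Davis--Gundy after reversing time, where the paper invokes Rio's inequality via \cite[Proposition 7]{MPU}; both are legitimate for bounded reverse martingale differences, though note that $M_k$ is not a forward martingale in $k$, so Doob must be applied to $N_m=M_n-M_{n-m}$ and combined with $\max_{1\le k\le n}|M_k|\le|M_n|+\max_{0\le m\le n}|N_m|$.

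The genuine gap is Step 1. The theorem asserts the existence of $\chi\in L^\infty(\mu)$, so the bound $\sum_n\|\cK^n\varphi\|_{L^\infty(\mu)}<\infty$ is part of what must be proved, and it is by far the longest portion of the paper's argument. You correctly name the ingredients (\eqref{Exp2}, \eqref{h}, the $K(\om)^2$ scaling, base mixing) but do not execute the proof. The actual mechanism is: identify $L^\infty$ with the dual of $L^1$ and reduce to bounding $\int g\cdot(\cK^n\varphi)\,d\mu=\int\big(\int (L_\om^n\varphi_\om)\, g_{\sig^n\om}\,d\mu_{\sig^n\om}\big)\,d\bbP(\om)$ for $\|g\|_{L^1}\le1$; replace $L_\om^n\varphi_\om$ by $\mu_\om(\varphi_\om)$ via \eqref{Exp2}; replace $h_{\sig^n\om}$ by $\cL_{\sig^{[n/2]}\om}^{n-[n/2]}\textbf{1}$ via \eqref{Exp1} and \eqref{h} so that $\mu_{\sig^n\om}(g_{\sig^n\om})$ becomes, up to an exponentially small error, a function of the coordinates $\om_j$ with $j\ge[n/2]$, while $\mu_\om(\varphi_\om)$ depends only on $\om_j$ with $j\le0$; and finally apply the $\phi$ (resp.\ $\psi$) covariance lemmas, where the lower bound on $h_\om$ (resp.\ a truncation and dominated-convergence argument) is what controls the $L^1$ norm of the resulting base observable. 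None of this is visible in your write-up, so as it stands the proposal establishes (i) and (ii) only conditionally on the unproved key estimate.
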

The proof of Theorem \ref{Mart} appears in Section \ref{MarSec}. Let us note that
once the martingale coboundary representation $\varphi=u+\chi-\chi\circ\tau$ is established, Theorem \ref{Mart} (i) follows from the Azuma–Hoeffding inequality together with Chernoff's bounding method and Theorem \ref{Mart} (ii) follows from, the so called, Rio's inequality \cite{Rio} (see \cite[Proposition 7]{MPU}). 

To obtain the martingale coboundary representation we show that 
if $\cK$ is the  transfer operator\footnote{Namely the one satisfying the duality relation
$$
\int (\cK g)\cdot fd\mu=\int g\cdot (f\circ\tau),\,\, g\in L^1(\Om\times X,\cF_0,\mu), f\in L^\infty(\Om\times X,\cF_0,\mu).
$$} corresponding to the system $(\Om\times X,\cF_0,\mu,\tau)$ then 
\begin{equation}\label{I}
\|\cK^n\varphi-\mu(\varphi)\|_{L^\infty}=O_{n\to\infty}(\del^n+\gamma_{[n/2]})
\end{equation}
where $\gamma_n$ is either $\psi_{n}$ or $\phi_{n,R}$, depending on the case, and $\del\in(0,1)$. 
Once this is established we can take 
$$
\chi=\sum_{n\geq 1}\cK^n\varphi.
$$ 
The proof of \eqref{I} is given in Proposition \ref{MainL} (i).

Our next result is an almost sure invariance principle.
\begin{theorem}[ASIP]\label{ASIP Mart}
Let $\cL$ be a good cocycle, and
suppose that the observable satisfies that $\esssup_{\om\in\Om}(K(\om)^2\|\varphi_\om\|_{BV})<\infty$.

When  $\text{ess-inf}\,\inf_{x} h_\om(x)>0$ we set $\gamma_n=\phi_{R,n}$, while otherwise we set $\gamma_n=\psi_n$. In both  cases, assume that 
$$
\sum_{n\geq 2}n^{5/2}(\log n)^3\gamma_n^4<\infty\,\,\text{ and }\,\,\sum_{n\geq 2}n(\log n)^3\gamma_n^2<\infty
$$
and 
$$
\sum_{n\geq 2}\frac{(\log n)^3}{n^2}\left(\sum_{k=0}^n(k+1)\gamma_k\right)^2<\infty.
$$
Then the limit 
$$
s^2=\lim_{n\to\infty}\frac{1}n\bbE\left[\big(S_n-\bbE[S_n]\big)^2\right]
$$
exists and the following version of the almost sure invariance principle holds true: there is a coupling of $(\varphi\circ\tau^n)$ with a sequence of iid Gaussian random variables $Z_j$ with zero mean and variance $s^2$ so that 
$$
\sup_{1\leq k\leq n}\left|(S_k-\bbE[S_k])-\sum_{j=1}^k Z_j\right|=O(n^{1/4}(\log n)^{1/2}(\log\log n)^{1/4}),\,\,\text{ almost surely}.
$$
\end{theorem}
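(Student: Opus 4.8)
The plan is to read off the ASIP for $S_n=S_n\varphi$ from the reverse martingale-coboundary decomposition already produced in Theorem~\ref{Mart}, reduce it to an almost sure invariance principle for a reverse martingale, and then apply the strong invariance principle for reverse martingale differences of Cuny and Merlev\`ede~\cite{CM}; the only nonroutine point is verifying the hypothesis of that criterion, which concerns the conditional (predictable) quadratic variation.

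First I would set up the reduction. By Theorem~\ref{Mart} there is an $\cF_0$-measurable $\chi\in L^\infty(\mu)$ with $u:=\varphi+\chi\circ\tau-\chi$ such that $(u\circ\tau^n)$ is a reverse martingale difference sequence for the decreasing filtration $\{\tau^{-n}\cF_0\}$; equivalently $\cK u=0$, where $\cK$ is the transfer operator of $\tau$ on the factor $(\Om\times X,\cF_0,\mu,\tau)$, and $\chi$ is, up to sign, the series $\sum_{m\ge1}\cK^m\varphi$, which converges in $L^\infty(\mu)$ since $\mu(\varphi)=0$ and $\|\cK^m\varphi\|_{L^\infty(\mu)}\to0$ fast enough (by the martingale section) under the assumed mixing rates. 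Writing $M_n:=\sum_{j=0}^{n-1}u\circ\tau^j$, one has $S_n=M_n+\chi-\chi\circ\tau^n$; since $\|\chi\|_{L^\infty(\mu)}<\infty$ the coboundary term is $O(1)$ $\mu$-a.s., hence negligible against the target error $O(n^{1/4}(\log n)^{1/2}(\log\log n)^{1/4})$, and it suffices to prove the ASIP for $M_n$. Moreover, because $\sup|\varphi_\om|\le C_0\|\varphi_\om\|_{BV}$ and $\varphi_\om$ has uniformly bounded variation, $\varphi\in L^\infty(\mu)$, hence $u\in L^\infty(\mu)$, so all integrability and moment hypotheses of~\cite{CM} hold trivially (and $\|\max_{k\le n}|M_k|\|_{L^p}=O(n^{1/2})$ by Theorem~\ref{Mart}(ii) and boundedness of the coboundary).

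The criterion of~\cite{CM} then gives exactly the rate $O(n^{1/4}(\log n)^{1/2}(\log\log n)^{1/4})$ once the predictable quadratic variation of $M_n$ is suitably controlled. Using the identity $\bbE_\mu[\,\cdot\,|\tau^{-1}\cF_0]=(\cK\,\cdot\,)\circ\tau$, this quadratic variation is
$$
\langle M\rangle_n=\sum_{j=0}^{n-1}\bbE_\mu\big[(u\circ\tau^j)^2\,\big|\,\tau^{-(j+1)}\cF_0\big]=\sum_{j=1}^{n}g\circ\tau^j,\qquad g:=\cK(u^2)\in L^\infty(\mu),
$$
while orthogonality of the reverse differences gives $\bbE_\mu[M_n^2]=n\,\mu(u^2)$, so $s^2:=\mu(u^2)=\lim_n n^{-1}\bbE_\mu[S_n^2]$ exists; if $s^2=0$ then $u=0$ $\mu$-a.e., $S_n=\chi-\chi\circ\tau^n$ is bounded, and the ASIP holds trivially with $Z_j\equiv0$. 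Assuming $s^2>0$, the remaining task is an $L^q$ bound on $\langle M\rangle_n-ns^2=\sum_{j=1}^n(g-s^2)\circ\tau^j$ decaying fast enough for~\cite{CM} (which also yields $n^{-1}\langle M\rangle_n\to s^2$ $\mu$-a.s., e.g.\ by a G\'al--Koksma/Borel--Cantelli argument along a dyadic subsequence). Expanding $g=\cK(u^2)$ through $u=\varphi+\chi\circ\tau-\chi$ and $\chi=\mp\sum_{m\ge1}\cK^m\varphi$ writes $g$ as an $L^\infty$-absolutely convergent combination of terms $\cK^{a}\!\big(\psi\cdot\cK^{b}\psi'\big)$ with $\psi,\psi'$ equal to $\varphi$ up to $\tau$-shifts; expanding the square then reduces the $L^q$ bound to covariance estimates $|\mathrm{Cov}_\mu(g\circ\tau^j,g\circ\tau^k)|$ for $j\le k\le n$, which by stationarity are controlled by quantities of the form $\big\|\cK^{i}(\Psi\cdot\cK^{\ell}\Psi')-\mu\big(\cK^{i}(\Psi\cdot\cK^{\ell}\Psi')\big)\big\|_{L^\infty(\mu)}$ and by $\|\cK^n\varphi-\mu(\varphi)\|_{L^\infty(\mu)}$ --- precisely the estimates established in Section~\ref{Sketch}. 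Summing these over $j\le k\le n$ produces exactly the three combinations of the mixing coefficients appearing in the hypotheses: $\frac{(\log n)^3}{n^2}\big(\sum_{k\le n}(k+1)\gamma_k\big)^2$ from the ``variance'' bookkeeping in the dyadic form of the criterion, and $n(\log n)^3\gamma_n^2$, $n^{5/2}(\log n)^3\gamma_n^4$ from the $L^2$ deviation of $\langle M\rangle_n-ns^2$ (a fourth-order functional of $\varphi$). Feeding this into~\cite{CM} gives the ASIP for $M_n$, hence for $S_n$, the coupling being transferred to $(\varphi\circ\tau^n)$ via $\varphi\circ\tau^n=u\circ\tau^n+\chi\circ\tau^n-\chi\circ\tau^{n+1}$.

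The main obstacle is the input just used: obtaining $\|\cK^{i}(\varphi\cdot\cK^{j}\varphi)-\mu(\cdot)\|_{L^\infty(\mu)}$ with the correct joint dependence on $i$, $j$ and the mixing rate $\gamma_n$. The difficulty is that $\cK^{j}\varphi$, although $L^\infty$-close to the constant $\mu(\varphi)=0$, need not have uniformly bounded variation, so one cannot simply treat $\varphi\cdot\cK^{j}\varphi$ as a bounded-variation observable and iterate $\cK$; this is resolved by the duality argument of Section~\ref{Sketch}, bounding these $L^\infty$ norms by testing against $L^1$ functions and exploiting the contraction of the normalized fiberwise operators $L_\om^n$ from Assumption~\ref{StanAss} together with the $\phi$- or $\psi$-mixing of the coordinates $\om_j$. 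Once those product estimates are available with sharp rates, the rest is the bookkeeping above together with the off-the-shelf reverse-martingale ASIP of~\cite{CM}.
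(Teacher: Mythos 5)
Your proposal is correct and follows essentially the same route as the paper: the paper likewise reduces the ASIP to the reverse martingale--coboundary decomposition $u=\varphi\pm(\chi\circ\tau-\chi)$ with $\chi=\sum_{m\ge1}\cK^m\varphi$ and then invokes \cite[Theorem 3.2]{CM}, whose hypotheses are verified exactly through the two $L^\infty$ bounds of Lemma \ref{MainL} (on $\|\cK^n\varphi\|_{L^\infty}$ and on $\|\cK^i(\varphi\,\cK^j\varphi)-\mu(\cK^i(\varphi\,\cK^j\varphi))\|_{L^\infty}$), proved by the $L^1$--$L^\infty$ duality argument you describe. Your additional unpacking of the predictable quadratic variation is just the interior of the cited Cuny--Merlev\`ede criterion, so nothing is missing.
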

\begin{remark}
The ASIP implies the functional CLT, see \cite{PS}. Thus, Theorem \ref{ASIP Mart} yields better results than Theorem \ref{FCLT} for $\phi_R$ or $\psi$ mixing driving sequences (which are not necessarily stretched-exponentially mixing).
\end{remark}

The proof of Theorem \ref{ASIP Mart} appears in Section \ref{MarSec}, and it relies on an application of \cite[Theorem 3.2]{CM}. In addition to \eqref{I}, in order to apply \cite[Theorem 3.2]{CM} we will show that for all $1\leq i,j\leq n$ we have
\begin{equation}\label{E}
\left\|\cK^i(\bar\varphi \cK^j\bar\varphi)-\mu\big(\cK^i(\bar\varphi \cK^j\bar\varphi)\big)\right\|_{L^\infty}=O_{n\to\infty}(\del^n+\gamma_n)
\end{equation}
where $\bar\varphi=\varphi-\mu(\varphi)$ and 
 $\del$ and $\gamma_n$  are as in \eqref{I}. The proof of \eqref{E} is given in Proposition \ref{MainL} (ii).

\begin{remark}
As discussed in Section \ref{SM}, the martingale-coboundary decomposition in Theorem \ref{Mart} (and its consequences) is comparable with the annealed case \cite{ANV}, and the main novelty is that we obtain it for more general skew products and functions $\varphi$ which depend on $\om$. Moreover, we do not assume that all $T_\om$ preserve the same absolutely continuous probability measure. The ASIP's we obtain are comparable to ASIP's in \cite{ANV} and \cite{Atnip} (see the discussion in Section \ref{SM}).
\end{remark}

\subsubsection{\textbf{A vector valued almost sure invariance principle in the uniformly random case for exponentially fast $\alpha$-mixing base maps}}
\,
\,

Let us take a vector-valued measurable function $\varphi=(\varphi_1,...,\varphi_d):\Om\times X\to\bbR^d$ so that $\varphi_\om=\varphi(\om,\cdot)$ depend on $\om$ only through $\om_0$ and $\esssup_{\om\in\Om}(K(\om)\|\varphi_{\om,i}\|_{BV})<\infty$ for all $1\leq i\leq d$. Let us also assume that $\mu(\varphi_i)=0$ for every $i$. Set $S_n=\sum_{j=0}^{n-1}\varphi\circ\tau^j$.
\begin{theorem}\label{ASIP GOU}
Suppose that $\al_n=O(\al^n)$ for some $\al\in(0,1)$. Then there is a positive semi definite matrix $\Sig^2$ so that
$$
\Sig^2=\lim_{n\to\infty}\frac1n\text{Cov}(S_n).
$$
Moreover, $\Sig^2$ is positive definite if and only if $\varphi\cdot v \neq r-r\circ\tau$ for all unit vectors v and all $r\in L^2$.

Assume now that there are constants $C>0$ and $\del\in(0,1)$ so that
\begin{equation}\label{App0}
\|\mathcal L_\omega^n \textbf{1}-h_{\sigma^n\omega}\|_{BV}\leq C\delta^n
\end{equation}
namely, that  $K(\om)$ is a bounded random variable.
Then there is a coupling of $(\varphi\circ\tau^n)$ with a sequence of independent Gaussian centered random vectors $(Z_n)$ so that $\text{Cov}(Z_n)=\Sig^2$ and for every $\ve>0$,
$$
\left|(S_n-\bbE[S_n])-\sum_{j=1}^n Z_j\right|=o(n^{1/4+\ve}),\,\text{almost surely}.
$$ 
\end{theorem}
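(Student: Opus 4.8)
We outline a proof via Gou\"ezel's spectral approach \cite{GO}. \textbf{Step 1 (existence and degeneracy of $\Sig^2$).} For every unit vector $v\in\bbR^d$ the scalar observable $\langle v,\varphi\rangle$ satisfies the hypotheses of Theorem \ref{CLT}, since exponential $\al$-mixing forces $\sum_n n\al_n<\infty$; hence $s_v^2:=\lim_n n^{-1}\mathrm{Var}_\mu(\langle v,S_n\rangle)$ exists and vanishes precisely when $\langle v,\varphi\rangle=r-r\circ\tau$ for some $r\in L^2(\mu)$. By polarization, $n^{-1}\mathrm{Cov}(S_n)$ converges to a symmetric positive semidefinite matrix $\Sig^2$ with $\langle v,\Sig^2 v\rangle=s_v^2$, which gives both assertions about $\Sig^2$ at once. (Equivalently one reads the limit off the $\ell=2$ case of Proposition \ref{PropMulti}: combining the fibrewise decay \eqref{Exp2} with exponential $\al$-mixing of $(\xi_n)$ yields $\|\mathrm{Cov}_\mu(\varphi,\varphi\circ\tau^n)\|\le C\te^n$ for some $\te\in(0,1)$, so the averages $n^{-1}\sum_{0\le j,k<n}\mathrm{Cov}_\mu(\varphi\circ\tau^j,\varphi\circ\tau^k)$ converge absolutely to $\Sig^2=\sum_{k\in\bbZ}\mathrm{Cov}_\mu(\varphi,\varphi\circ\tau^{|k|})$.)

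\textbf{Step 2 (reduction to a block decorrelation estimate).} By the abstract $\bbR^d$-valued ASIP of \cite{GO}, once $\Sig^2$ exists (a degenerate limit being permitted) the ASIP with error $o(n^{1/4+\ve})$ for every $\ve>0$ follows from a decorrelation estimate for characteristic functions of blocks of the form
\begin{equation}\label{GouCond}
\Big|\bbE_\mu\Big[e^{i\sum_{j=1}^{p}\langle t_j,S_{I_j}\rangle}\Big]-\prod_{j=1}^{p}\bbE_\mu\Big[e^{i\langle t_j,S_{I_j}\rangle}\Big]\Big|\le C_p\Big(\sum_{j=1}^{p}|t_j|\Big)\sum_{j=1}^{p-1}\te^{g_j},
\end{equation}
valid for a fixed $\te\in(0,1)$, a fixed $\del_0>0$ and constants $C_p$, uniformly over $t_1,\dots,t_p\in\bbR^d$ with $|t_j|\le\del_0$ and over all integer intervals $I_1<\dots<I_p$ with consecutive gaps $g_1,\dots,g_{p-1}$, where $S_I=\sum_{j\in I}\varphi\circ\tau^j$. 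Everything thus reduces to \eqref{GouCond}.

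\textbf{Step 3 (fibrewise factorisation via perturbed operators).} Conditioning on $\om$, transport each block to the start of its orbit using $(T_\om^{a})_*\mu_\om=\mu_{\sig^a\om}$ and unfold the transfer-operator identity, so that the quenched joint block characteristic function becomes an alternating composition of the complex perturbed fibre operators $L_{\om,t}g=L_\om(e^{i\langle t,\varphi_\om\rangle}g)$ over the blocks and of plain operators $L_\om^{g_j}$ over the gaps, applied to $\textbf{1}$ (using $L_\om\textbf{1}=\textbf{1}$). In the uniformly random case \eqref{App0} supplies an $\om$-uniform spectral gap for $L_\om$ through \eqref{Exp2}, and $\om$-uniform analytic perturbation theory gives, for $|t|\le\del_0$, a uniform quasi-compact splitting $L_{\om,t}^n=\la_{\om,t}^n\Pi_{\om,t}+Q_{\om,t}^n$ with $\|Q_{\om,t}^n\|_{BV}\le C\rho^n$ ($\rho<1$) and $|\la_{\om,t}-1|+\|\Pi_{\om,t}-\mu_\om(\cdot)\textbf{1}\|_{BV}\le C|t|$. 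Replacing each gap operator $L_\om^{g_j}$ by the rank-one projection $\mu_\om(\cdot)\textbf{1}$, at a total cost $C_p(\sum_k|t_k|)\sum_j\rho^{g_j}$, factorises the quenched characteristic function as $\prod_j F_j(\om)$ plus an error, where $F_j(\om)=\int e^{i\langle t_j,S_{[0,|I_j|)}^{\sig^{a_j}\om}\varphi\rangle}\,h_{\sig^{a_j}\om}\,dm$ satisfies $|F_j|\le1$. Finally $h_{\sig^{a_j}\om}=\cL^{N}_{\sig^{a_j-N}\om}h_{\sig^{a_j-N}\om}$ lies, by \eqref{Exp1} and boundedness of $K$, within $C\rho^{N}$ of $\cL^{N}_{\sig^{a_j-N}\om}\textbf{1}$ in $L^1(m)$, and the latter depends only on $\om_{a_j-N},\dots,\om_{a_j-1}$; hence each $F_j$ coincides, up to $C\rho^{N}$, with a function bounded by $1$ of the coordinates $\om_\ell$, $\ell\in[a_j-N,b_j)$. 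Choosing $N=\lceil\tfrac13\min_k g_k\rceil$ keeps these windows pairwise separated by a definite fraction of the gaps $g_j$.

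\textbf{Step 4 (base decorrelation, conclusion, and the main difficulty).} After Step 3, $\bbE_\mu[e^{i\sum_j\langle t_j,S_{I_j}\rangle}]$ lies within $C_p(\sum_k|t_k|)\sum_j\rho^{g_j}$ of $\int\prod_j\tilde F_j\,d\bbP$, where the $\tilde F_j$ are bounded by $1$ and measurable on pairwise separated index windows; telescoping the joint law against the product law one window at a time and using the exponential bound on $\al_n$ replaces this by $\prod_j\int\tilde F_j\,d\bbP$, up to an error of the same kind, while $\int\tilde F_j\,d\bbP$ is itself within $C\rho^{g_j}$ of $\bbE_\mu[e^{i\langle t_j,S_{I_j}\rangle}]$. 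Absorbing every geometric rate into a single $\te\in(0,1)$ and the combinatorics into the $C_p$ yields \eqref{GouCond}, and \cite{GO} then delivers the ASIP with the stated rate. The main difficulty is Step 3: the passage from a quenched block characteristic function to a uniformly bounded, finite-window-measurable function with exponentially small error must be carried out \emph{uniformly} over all block configurations, over all $|t_j|\le\del_0$ and over $\om$, which requires the $\om$-uniform perturbation theory of the non-autonomous products $L_{\om,t}^n$ and a careful accounting of how the density $h_\om$ and the right-endpoint matching of each block enter, so that the residual base dependence is genuinely confined to a window of size $O(\min_k g_k)$; one must also verify that \eqref{GouCond} matches exactly the hypothesis of \cite{GO} in the $\bbR^d$-valued setting. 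Once \eqref{GouCond} is in hand, the remainder is a black-box application.
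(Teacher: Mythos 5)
Your proof follows the same route as the paper's: $\Sig^2$ is obtained by polarization from the scalar asymptotic variances of Theorem \ref{CLT}, the ASIP is reduced to the characteristic-function hypothesis of \cite[Theorem 1.2]{GO}, the quenched block estimate is handled by perturbed transfer operators (the paper simply imports this step as \cite[Lemma 24]{DHS} rather than re-deriving it), and the annealed estimate then follows by approximating the quenched characteristic functions $G(\om),F(\om)$ by functions of finitely many coordinates --- using \eqref{App0} and equivariance --- and applying exponential $\al$-mixing of the base, exactly as in your Steps 3--4. The one point to repair is the target condition in Step 2: Gou\"ezel's hypothesis concerns \emph{two} groups of \emph{adjacent} blocks separated by a single gap $k$, with an error of the form $C^{n+m}e^{-ck}$, so your full-product decorrelation penalized by all consecutive gaps $g_j$ is neither literally what \cite{GO} requires nor provable when the internal gaps vanish (adjacent blocks are genuinely correlated); restating it in the two-group form, your Steps 3--4 then apply verbatim, with only the single gap operator $L_\om^{k}$ replaced by a rank-one projection.
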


\section{Limit theorems via the method of cumulants for $\al$-mixing driving processes}
We recall next that the $k$-th cumulant of a random variable $W$ with finite moments of all 
orders is given by
\[
\Gam_k(W)=\frac1{i^k}\frac{d^k}{dt^k}\big(\ln\bbE [e^{itW}]\big)\big|_{t=0}.
\]
Note that $\Gam_1(W)=\bbE[W]$, $\Gam_2(W)=\mathrm{Var}(W)$ and that 
$\Gam_k(aW)=a^k\Gam_k(W)$ for any $a\in\bbR$ and $k\geq 1$.

From now on we will assume that $\bbE[S_n]=0$ for all $n$,  namely we will replace $\varphi$ by $\varphi-\mu(\varphi)$.
The main result in this section is the following.
\begin{theorem}\label{CumEst}
Let $\cL$ be a good cocycle,  and suppose that Assumption \ref{AsAlMix} holds true and that $\|\varphi\|_{K}=\esssup_{\om\in\Om}(K(\om)\|\varphi_\om\|_{BV})<\infty$. Then, with $\gamma=1/\eta$,
there exists a constant $c_0$ which depends only on $\|\varphi\|_K$ and the constants from Assumption \ref{AsAlMix} so that for any $k\geq3$,
\[
|\Gam_k(S_n)|\leq n(k!)^{1+\gam}(c_0)^{k-2}.
\]
\end{theorem}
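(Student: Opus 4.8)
The plan is to expand $\Gam_k(S_n)$ into a sum of joint cumulants of the stationary sequence $(\varphi\circ\tau^j)_{j\ge 0}$, reduce that sum by symmetry and stationarity to a single series over ordered index tuples anchored at $0$, bound each joint cumulant by a multiple correlation estimate (Proposition~\ref{PropMulti}) that decays stretched-exponentially in the largest gap between the indices, and then sum the series by an elementary counting argument together with a moment estimate for stretched-exponential tails.

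First, $\varphi\in L^\infty(\mu)$: by the standing assumptions on the variation $v$ one has $\|\varphi_\om\|_\infty\le C_0\|\varphi_\om\|_{BV}\le C_0\|\varphi\|_K$ (recall $K\ge 1$), so all moments of $S_n$, hence all its cumulants, are finite. Since $\mu$ is $\tau$-invariant the process $(\varphi\circ\tau^j)$ is stationary, and by multilinearity of joint cumulants
\[
\Gam_k(S_n)=\sum_{j_1,\dots,j_k=0}^{n-1}\mathrm{Cum}\big(\varphi\circ\tau^{j_1},\dots,\varphi\circ\tau^{j_k}\big).
\]
Joint cumulants are symmetric in their entries, so grouping the tuples by their order statistics (each weakly increasing tuple arising from at most $k!$ tuples) and using translation invariance of the joint cumulants of a stationary process to anchor the smallest index at $0$ gives
\[
|\Gam_k(S_n)|\le n\,k!\sum_{0=d_1\le d_2\le\cdots\le d_k\le n-1}\big|\mathrm{Cum}\big(\varphi,\varphi\circ\tau^{d_2},\dots,\varphi\circ\tau^{d_k}\big)\big|.
\]
Since $k!\,(k!)^{\gamma}=(k!)^{1+\gamma}$, it suffices to bound the last sum by $(k!)^{\gamma}(c_0)^{k-2}$, uniformly in $n$.

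The heart of the matter is Proposition~\ref{PropMulti}, which I would prove as follows. Expand a $k$-fold joint cumulant as the standard alternating sum over set partitions of products of moments $\mu\big(\prod_{i\in B}\varphi\circ\tau^{d_i}\big)$; this reduces the task to decorrelating, across the largest gap $r=r(\mathbf d)=\max_i(d_{i+1}-d_i)$, a product $FG$ of two such blocks of ``early'' and ``late'' factors. Writing the late block as $\widetilde G\circ T_\om^{\,p}$ (with $p$ the smallest late index) and using duality, $\mu(FG)=\int\big(\cL_\om^{\,p}(F_\om h_\om)\big)\widetilde G_{\sigma^p\om}\,dm\,d\bbP$; splitting $\cL_\om^{\,p}$ right after the last early index and feeding in \eqref{Exp1}--\eqref{Exp2} replaces this, up to $O(e^{-\la r/3})$ times a constant growing geometrically in the block sizes, by $\int\mu_\om(F_\om)\,\mu_{\sigma^p\om}(\widetilde G_{\sigma^p\om})\,d\bbP$; and by \eqref{Exp1} again these two factors depend, up to a further $O(e^{-\la r/3})$ tail, on disjoint blocks of the coordinates $\om_j$ separated by at least $\lfloor r/3\rfloor$ indices, so Assumption~\ref{AsAlMix} bounds their covariance by $c_1e^{-c_2\lfloor r/3\rfloor^\eta}$. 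Summing over the bounded number of partitions this yields
\[
\big|\mathrm{Cum}\big(\varphi\circ\tau^{d_1},\dots,\varphi\circ\tau^{d_k}\big)\big|\le B^{k}\,e^{-c\,r(\mathbf d)^{\eta}}
\]
for suitable $B,c>0$ depending only on $\|\varphi\|_K,C_0,c_1,c_2,\la,\eta$. The main obstacle is making $B$ depend only geometrically on $k$: the variations of the products $\prod_i\varphi_{\sigma^{d_i}\om}\circ T_\om^{d_i}$ must be controlled using submultiplicativity of $v$ and the distortion bound \eqref{V}, and the scaling hypothesis $\esssup_\om(K(\om)\|\varphi_\om\|_{BV})<\infty$ together with the temperedness of $K$ (Proposition~\ref{PA}, with $\ve<\tfrac13\la$) is precisely what keeps the factors $K(\sigma^i\om)$ accumulated along the compositions from swamping the exponential decay; for the same reason one works throughout with the normalized cocycle $L_\om$ and estimate \eqref{Exp2} rather than with $\cL_\om$ directly.

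Finally, insert this bound into the ordered sum and count. The joint cumulant depends only on the consecutive gaps $d_{i+1}-d_i$ ($i=1,\dots,k-1$, with $d_1=0$), and the set of ordered tuples with $r(\mathbf d)=r_0$ is contained in the set of tuples with all $k-1$ gaps in $\{0,1,\dots,r_0\}$, which has at most $(r_0+1)^{k-1}$ elements. Hence
\[
\sum_{0=d_1\le\cdots\le d_k\le n-1}\big|\mathrm{Cum}(\varphi,\varphi\circ\tau^{d_2},\dots)\big|\le B^{k}\sum_{r_0\ge 0}(r_0+1)^{k-1}e^{-c\,r_0^{\eta}}\le (k!)^{\gamma}(c_0)^{k-2},
\]
after relabelling the constant, where the exponent $\gamma=1/\eta$ enters exactly through the elementary estimate $\sum_{r\ge0}(r+1)^{m}e^{-c r^{\eta}}\le (m!)^{1/\eta}(c')^{m}$ (a stretched-exponential tail of index $\eta$ has $m$-th moment of order $(m!)^{1/\eta}$ up to a geometric factor), and the fiberwise exponential term $e^{-\la r/3}$, being of this form with $\eta=1\ge\eta$, contributes no worse. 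Combining with the reduction of the second paragraph gives $|\Gam_k(S_n)|\le n(k!)^{1+\gamma}(c_0)^{k-2}$, as claimed; this last step is the ``general result'' of the method of cumulants referred to in \cite{Stat, DE}, carried out by hand here because the summands $\varphi\circ\tau^j$ are not classically $\al$-mixing at a controlled rate and Proposition~\ref{PropMulti} is the substitute that supplies the required joint-cumulant decay.
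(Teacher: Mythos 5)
Your overall architecture (multiple correlation estimates for the products $\prod_j\varphi\circ\tau^{d_j}$, then a cumulant summation) is the same as the paper's, and your sketch of Proposition \ref{PropMulti} — decorrelate fiberwise across the largest gap via \eqref{Exp1}--\eqref{Exp2} with the scaling condition taming the factors $K(\sigma^i\om)$, then approximate $\mu_\om(F_\om)$ by a function of finitely many coordinates and invoke $\al$-mixing — matches Lemma \ref{L1}, Lemma \ref{LemmmaAp} and Corollaries \ref{Cor1}--\ref{ProMult}. Where you diverge is the passage from those estimates to the cumulant bound: the paper feeds them into the black box Proposition \ref{GorcCor} (\cite[Corollary 3.2]{AIHP}), whereas you try to redo that step by hand via individual joint cumulants. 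That step contains a genuine gap.

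Concretely, your claimed bound
$\bigl|\mathrm{Cum}\bigl(\varphi\circ\tau^{d_1},\dots,\varphi\circ\tau^{d_k}\bigr)\bigr|\le B^{k}e^{-c\,r(\mathbf d)^{\eta}}$
with $B$ depending only geometrically on $k$ is false. Already at $\mathbf d=(0,\dots,0)$ the left-hand side is $\Gam_k(\varphi)$, the $k$-th cumulant of a single bounded random variable, which is generically of order $k!\,\rho^{-k}$ (the cumulant generating function has a finite radius of convergence $\rho$), not $B^k$. The derivation you sketch — ``summing over the bounded number of partitions'' — is where this is lost: the moment--cumulant formula $\mathrm{Cum}=\sum_{\pi}(-1)^{|\pi|-1}(|\pi|-1)!\prod_{B\in\pi}m_B$ runs over all set partitions of $\{1,\dots,k\}$ (the Bell number, which is not geometrically bounded), and with the weights $(|\pi|-1)!$ the total mass $\sum_\pi|\pi|!$ is the Fubini number, of order $k!/(\ln 2)^{k+1}$. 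Comparing $\prod_B m_B$ with its factorized version across the largest gap therefore costs an extra factor of order $k!$, so the honest version of your joint-cumulant bound is $k!\,B^{k}e^{-c\,r(\mathbf d)^{\eta}}$. Inserted into your final summation (which already pays $k!$ for the symmetrization and $(k!)^{\gamma}$ for the stretched-exponential tail) this yields $n(k!)^{2+\gamma}(c_0)^{k}$, strictly weaker than the claimed $n(k!)^{1+\gamma}(c_0)^{k-2}$ — and the exponent on $k!$ is not cosmetic: the Berry--Esseen rate $n^{-1/(2+4\gamma)}$ and the moderate deviations range in Theorems \ref{CLT}--\ref{Thm2.4} are read off from it. Avoiding the double factorial is exactly the content of the Statulevi\v{c}ius-type combinatorics encapsulated in Proposition \ref{GorcCor}, which works with centered moments rather than summing bounds on individual joint cumulants; you would need either to invoke that result (as the paper does) or to reproduce its argument, not the naive partition expansion.
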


We will prove Theorem \ref{CumEst} by applying the following Proposition \ref{GorcCor}, which appears in \cite{AIHP} as Corollary 3.2.

Let us start with a few preparations.
Let $V$ be a finite set and $\rho:V\times V\to[0,\infty)$ be so that $\rho(v,v)=0$
and $\rho(u,v)=\rho(v,u)$ for all $u,v\in V$. For every $A,B\subset V$ set 
\[
\rho(A,B)=\min\{\rho(a,b): a\in A, b\in B\}.
\] 
We assume here that  
 there exist $c_0\geq1$ and $u_0\geq 0$ so that 
\begin{equation}\label{linear rho}
|\{u\in V: \rho(u,v)\leq s\}|\leq c_0s^{u_0}
\end{equation}
for all $v\in V$ and $s\geq 1$.

Next, let $X_v,\, v\in V$ be a collection of centered random variables 
with finite moments of all orders, and for each $v\in V$ and $t\in(0,\infty]$ let 
$\varrho_{v,t}\in(0,\infty]$ be so that $\|X_v\|_t\leq\varrho_{v,t}$.  
\begin{assumption}\label{X ass}
For some $0<\del\leq\infty$ and all $k\geq 1$, $b>0$ and a finite collection $A_j,\,j\in\cJ$ of (nonempty) subsets of $V$ so that 
$\min_{i\not=j}\rho(A_i,A_j)\geq b$ and $r:=\sum_{j\in\cJ}|A_j|\leq k$ we have
\begin{equation}\label{MixGorc}
\left|\bbE\left[\prod_{j\in\cJ}\prod_{i\in A_j}X_i\right]-\prod_{j\in\cJ} \bbE\left[\prod_{j\in A_j}X_i\right]\right|\leq
(r-1)\Big(\prod_{j\in\cJ}\prod_{i\in A_j}\varrho_{i,(1+\del)k}\Big)\gam_\del(b,k)
\end{equation}
where $\gam_\del(b,r)$ is some nonnegative number which depends only
on $\del,b$ and $r$, and $|\Del|$ stands for the cardinality of a finite set $\Del$.
\end{assumption}

Set $W=\sum_{v\in V} X_v$. 
In the course of the proof of Theorems \ref{CLT}, \ref{Thm:ModDevNonc}, \ref {Thm2.4} and \ref{FCLT} we will need the following general result. 

\begin{proposition}[Corollary 3.2, \cite{AIHP}]\label{GorcCor}
Let \eqref{linear rho} be and Assumption \ref{X ass} be in force.
 Assume also that 
$$
\tilde\gam_\del(m,k):=\max\{\gam_\del(m,r)/r: 1\leq r\leq k\}
\leq de^{-am^\eta}
$$ 
for some
$a,\eta>0$, $d\geq 1$ and all $k,m\geq1$. Then there exists a constant $c$ which depends only
on $c_0,a,u_0$ and $\eta$ so that for every $k\geq 2$, 
\begin{equation}\label{GorEst1}
|\Gam_k(W)|\leq d^k|V|c^k(k!)^{1+\frac{u_0}\eta}\big(M_k^k+M_{(1+\del)k}^k\big)
\end{equation}
where for all $q>0$,
\[
M_{q}=\max\{\varrho_{v,q}:\,v\in V\}\,\,\text{ and }\,\,M_q^k=(M_q)^k.
\]
When the $X_v$'s are bounded and (\ref{MixGorc}) holds true with $\del=\infty$
we can always take $\varrho_{v,t}=\varrho_{v,\infty},\, t>0$ and 
then for any $k\geq2$,
\begin{equation}\label{GorEst1-cor}
|\Gam_k(W)|\leq 2d^k|V|M_\infty^kc^k(k!)^{1+\frac{u_0}\eta}.
\end{equation}
When $\del<\infty$ and there exist $\te\geq0$ and $M>0$ so that
\begin{equation}\label{MomGorch}
(\varrho_{v,k})^k\leq M^k(k!)^{\te}
\end{equation}
for any $v\in V$ and $k\geq1$, then for any $k\geq2$,
\begin{equation}\label{GorEst2}
|\Gam_k(W)|\leq 3C^{\frac{\te}{1+\del}}d^k|V|c^k(1+\del)^k
M^k(k!)^{1+\frac{u_0}{\eta}+\te}
\end{equation}
where $C$ is some absolute constant. 
\end{proposition} 

Theorem \ref{CumEst} will follow from the following result, which is proved in the next section.
\begin{proposition}\label{PropMulti}
For a good cocycle $\cL$ and an observable $\varphi$ satisfying \eqref{Scale} we have the following.
Fix some $n$ and set  $V=\{0,1,...,n-1\}$ and $X_v=\varphi\circ\tau^v$. 
Set also $\rho(x,y)=|x-y|$, and let $t=\delta=\infty$, $\gamma_{\infty}(b,k)=\gamma_b=e^{-(\la-\ve)b/3}+\al_{[b/3]}$.
Then condition \eqref{MixGorc} holds true with the above choices and with 
$$\varrho_{v,\infty}=A_0\max \left(\esssup_{\om\in\Om}(K(\om)\|\varphi_\om\|_{BV}), \|\varphi\|_{L^\infty}\right)$$ 
where $A_0$ is a constant which depends only on $\la-3\ve$ and on the constant $C$ so that $\sup|g|\leq C\|g\|_{BV}$ for every function $g:X\to\bbC$ (and the dependence can be easily recovered from the proof).

If in addition Assumption \ref{AsAlMix} holds then the conditions of Proposition \ref{GorcCor} hold true with $u_0=1$, $c_0=2$ and $\gamma=1/\eta$.
\end{proposition}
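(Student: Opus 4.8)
The plan is to verify \eqref{MixGorc} by treating the fibre direction with the transfer‑operator estimates \eqref{Exp1}--\eqref{Exp2} and the base direction with the $\al$‑mixing of $(\xi_n)$, the scaling hypothesis $K(\om)\|\varphi_\om\|_{BV}\le\|\varphi\|_K$ (with $\|\varphi\|_K:=\esssup_{\om\in\Om}(K(\om)\|\varphi_\om\|_{BV})$) being what keeps every variation norm uniformly bounded. Write $\cJ=\{1,\dots,p\}$, $Y_j=\prod_{i\in A_j}\varphi\circ\tau^i$, sort $\bigcup_jA_j=\{w_1<\dots<w_r\}$, and cut this list at every gap $w_{l+1}-w_l\ge b$ into ``blocks'' $B_1<\dots<B_q$. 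Since indices of distinct $A_j$'s are $\ge b$ apart, each $B_t$ lies inside a single $A_j$, the blocks are pairwise $\ge b$ apart in $\bbZ$, and with $Z_t=\prod_{i\in B_t}\varphi\circ\tau^i$ one has $\prod_jY_j=\prod_tZ_t$ and $Y_j=\prod_{B_t\subseteq A_j}Z_t$. So it suffices to show that the joint $\mu$‑expectation factorises over any family of pairwise‑($\ge b$)‑separated blocks up to an error $O(\varrho^r\gam_b)$; applying this once to $\{B_1,\dots,B_q\}$ and once, for each $j$, to $\{B_t:B_t\subseteq A_j\}$, and recombining, collapses everything to \eqref{MixGorc}.

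For the fibre step fix $\om$ and, using $\mu_\om(g\cdot f\circ T_\om^d)=\mu_{\sigma^d\om}(L_\om^dg\cdot f)$ repeatedly, rewrite $\mu_\om(\prod_l\varphi_{\om_{w_l}}\circ T_\om^{w_l})=\mu_{\sigma^{w_r}\om}(U_r)$ with $U_1=\varphi_{\om_{w_1}}$ and $U_l=L_{\sigma^{w_{l-1}}\om}^{w_l-w_{l-1}}(U_{l-1})\cdot\varphi_{\om_{w_l}}$. The point is that this transport does not blow up: tracking $\hat U_l:=K(\sigma^{w_l}\om)\|U_l\|_{BV}$, the bound $\|L_{\sigma^{w_{l-1}}\om}^{d}U_{l-1}\|_{BV}\le(K(\sigma^{w_{l-1}}\om)e^{-\la d}+C_0)\|U_{l-1}\|_{BV}$ from \eqref{Exp2}, together with $K(\sigma^{w_{l-1}}\om)\le K(\sigma^{w_l}\om)e^{\ve(w_l-w_{l-1})}$ (here $\ve<\la$) and $\|\varphi_{\om_{w_l}}\|_{BV},\ \|\varphi_{\om_{w_l}}\|_{L^\infty}\le C_0\|\varphi\|_K/K(\sigma^{w_l}\om)$, gives $\hat U_l\le c_1\hat U_{l-1}$, hence $\|U_l\|_{BV}\le c_1^{l-1}\|\varphi\|_K/K(\sigma^{w_l}\om)$ with $c_1$ depending only on $C_0$. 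At every block boundary $l\to l+1$ (so $w_{l+1}-w_l\ge b$) I replace $L^{w_{l+1}-w_l}(U_l)$ by $\mu_{\sigma^{w_l}\om}(U_l)\mathbf 1$; by \eqref{Exp2} the discarded part has $BV$‑norm $\le\hat U_le^{-\la(w_{l+1}-w_l)}\le c_1^{l-1}\|\varphi\|_Ke^{-\la b}$ — again with no free $K$‑factor — and, absorbed into the remaining product, contributes $\lesssim\varrho^re^{-\la b}$. Telescoping over the $q-1\le r-1$ boundaries gives $\bbE_\mu\prod_jY_j=\int\prod_tc_{B_t}(\om)\,d\bbP+O((r-1)\varrho^re^{-\la b})$, where $c_B(\om):=\mu_\om(\prod_{i\in B}\varphi_{\om_i}\circ T_\om^i)$ satisfies $|c_B|\le\|\varphi\|_{L^\infty}^{|B|}$, is $\mathscr F_{-\infty,\max B}$‑measurable in $\om$ (it involves only $h_\om$ and the maps and observables up to time $\max B$), and is built from a transported block product $\tilde U_B$ with $\|\tilde U_B\|_{BV}\le c_1^{|B|-1}\|\varphi\|_K/K(\sigma^{\max B}\om)$; the identical computation applied to each $Y_j$ gives $\bbE_\mu Y_j=\int\prod_{B\subseteq A_j}c_B\,d\bbP+O(\cdots)$.

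It remains to decorrelate the $c_B$'s over $\bbP$. With $N=[b/3]$, approximating $h_{\sigma^{\max B}\om}$ by $\cL^{N}_{\sigma^{\max B-N}\om}\mathbf 1$ via \eqref{Exp1} replaces $c_B$ by $\tilde c_B$, a function of the coordinates $\om_j$ with $\min B-N\le j\le\max B$ only; the error is $\le C_0\|\tilde U_B\|_{BV}K(\sigma^{\max B-N}\om)e^{-\la N}$, and since $\|\tilde U_B\|_{BV}$ carries the factor $1/K(\sigma^{\max B}\om)$ while $K(\sigma^{\max B-N}\om)\le K(\sigma^{\max B}\om)e^{\ve N}$, this is $\le c_1^{|B|-1}\|\varphi\|_Ke^{-(\la-\ve)[b/3]}$, once more with no free $K$‑factor. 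Because distinct blocks are $\ge b$ apart and $N<b$, the coordinate windows of the $\tilde c_B$'s are disjoint, linearly ordered, and $\ge b-N\ge[b/3]$ apart, so iterating $|\mathrm{Cov}_\bbP(F,G)|\le 4\|F\|_{L^\infty}\|G\|_{L^\infty}\al(\cG,\cH)$ along the chain yields $|\int\prod_t\tilde c_{B_t}d\bbP-\prod_t\int\tilde c_{B_t}d\bbP|\le 4(r-1)\varrho^r\al_{[b/3]}$. Feeding this and its version for each $A_j$ back, and using $\prod_j\prod_{B\subseteq A_j}\int\tilde c_Bd\bbP=\prod_{\text{all }B}\int\tilde c_Bd\bbP$, all main terms cancel and \eqref{MixGorc} holds with $\gam_b=e^{-(\la-\ve)b/3}+\al_{[b/3]}$ and $\varrho=A_0\max(\|\varphi\|_K,\|\varphi\|_{L^\infty})$, where $A_0$ (which must dominate $c_1/\|\varphi\|_K$, $C_0$, the geometric‑series losses, and make the case of small $b$ trivial) depends only on $C_0$ and $\la-3\ve$. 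For the last assertion, \eqref{linear rho} is immediate for $\rho=|\cdot-\cdot|$ on $\{0,\dots,n-1\}$ with $u_0=1$ (at most $2s+1\le 3s$ integers lie within distance $s\ge1$ of a point), and under Assumption \ref{AsAlMix}, since $\gam_\infty(m,r)=\gam_m$ does not depend on $r$, $\tilde\gam_\infty(m,k)=\gam_m=e^{-(\la-\ve)m/3}+\al_{[m/3]}\le de^{-am^\eta}$ (one may assume $\eta\le1$, whence $e^{-(\la-\ve)m/3}\le e^{-(\la-\ve)3^{-\eta}m^\eta}$ and $\al_{[m/3]}\le c_1e^{-c_26^{-\eta}m^\eta}$), so $\gam=u_0/\eta=1/\eta$.

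The step that is more than bookkeeping is the non‑blow‑up estimate: a head‑on treatment of a block product $\prod_{i\in B}\varphi_{\om_i}\circ T_\om^i$ as a $BV$ function fails, because $v(g\circ T_\om^i)$ can grow like $v(g)\prod_{k<i}K(\sigma^k\om)$. The remedy — transporting one observable at a time with the \emph{normalised} operator $L_\om$ and tracking $\|U_l\|_{BV}$ together with the factor $1/K(\sigma^{w_l}\om)$, so that (i) the $e^{-\la d}$ contraction in \eqref{Exp2} across each gap $d$ offsets the temperedness growth $e^{\ve d}$ of $K$ (this is where $\ve<\la$, ultimately $\ve<\tfrac13\la$, is used), and (ii) the single leftover $K$‑factor at each step is absorbed either by the next $\varphi$‑factor via the scaling bound, or, in the $h$‑localisation, by this tracked $1/K$ — is the technical heart, and it is exactly this interplay of \eqref{Exp1}--\eqref{Exp2}, temperedness (Proposition \ref{PA}) and the scaling condition that forces $A_0$ to depend on nothing beyond $C_0$ and $\la-3\ve$.
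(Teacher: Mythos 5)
Your proposal is correct and follows essentially the same three-stage route as the paper: fiberwise decorrelation of the block products via the normalized operators $L_\om$ and \eqref{Exp2} across the gaps (the paper's Lemma \ref{L1}), localization of each resulting factor to finitely many coordinates via \eqref{Exp1} (Lemma \ref{LemmmaAp}), and finally the $\al$-mixing of $(\xi_n)$ to factorize the $\bbP$-integral. The differences are only bookkeeping --- you transport one observable at a time while tracking $K(\sigma^{w_l}\om)\|U_l\|_{BV}$ and cut at every gap $\geq b$, whereas the paper bounds the $BV$-norm of an entire block product at once and treats each $A_j$ as a single interval --- so there is nothing of substance to compare.
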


 \subsection{Multiple correlation estimates: proof of Proposition \ref{PropMulti}}

Our goal is to show that \eqref{MixGorc} holds true with the desired upper bounds.
We first need the following result.

\begin{lemma}
For every two measurable  functions $g,h$  on $\mathcal Y^{\mathbb N}$ with $g,h\in L^\infty$(w.r.t to the law of $(\xi_n)$) and all $k\in\mathbb Z$ and $n\in\mathbb N$  we have 
\begin{equation}\label{alpha}
\left|\mathbb E[g(...,\xi_{k-1},\xi_k)h(\xi_{k+n},\xi_{k+n+1},...)]-\mathbb E[g(...,\xi_{k-1},\xi_k)]\cdot\mathbb E[h(\xi_{k+n},\xi_{k+n+1},...)]\right|
\end{equation}
$$\leq \frac14 \|g(...,\xi_{k-1},\xi_k)\|_{L^\infty}\|h(\xi_{k+n},\xi_{k+n+1},...)\|_{L^\infty} \alpha_n.$$
\end{lemma}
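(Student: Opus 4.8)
The plan is to recognize the estimate \eqref{alpha} as the classical covariance inequality for $\al$-mixing $\sig$-algebras and to reproduce its short proof in the present notation. Set $\cG=\mathscr F_{-\infty,k}$, $\cH=\mathscr F_{k+n,\infty}$, $G=g(\dots,\xi_{k-1},\xi_k)$ and $H=h(\xi_{k+n},\xi_{k+n+1},\dots)$; then $G$ is $\cG$-measurable, $H$ is $\cH$-measurable, and $\al(\cG,\cH)\le\al_n$ by \eqref{al def} together with stationarity. Dividing through by $\|G\|_{L^\infty}\|H\|_{L^\infty}$ (when one of these vanishes both sides are $0$) and splitting into real and imaginary parts, I reduce the claim to bounding $|\bbE[GH]-\bbE[G]\bbE[H]|$ for real-valued $G,H$ with $\|G\|_{L^\infty},\|H\|_{L^\infty}\le1$.

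The first step is the indicator estimate: for $B\in\cH$, since $G$ is $\cG$-measurable,
$$
\bbE[G\mathbf 1_B]-\bbE[G]\bbP(B)=\bbE\big[G(\bbE[\mathbf 1_B\mid\cG]-\bbP(B))\big],
$$
so its modulus is at most $\bbE|U|$ with $U=\bbE[\mathbf 1_B\mid\cG]-\bbP(B)$. Since $\{U\ge0\}$ and $\{U<0\}$ lie in $\cG$, writing $\bbE|U|=\bbE\big[U(\mathbf 1_{\{U\ge0\}}-\mathbf 1_{\{U<0\}})\big]$ expresses it as a difference of two quantities of the form $\bbP(A\cap B)-\bbP(A)\bbP(B)$ with $A\in\cG$, $B\in\cH$, whence $\bbE|U|\le 2\al(\cG,\cH)$.

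The second step applies the same ``conditioning and sign'' device on the $\cH$-side. Since $H$ is $\cH$-measurable, $\bbE[GH]-\bbE[G]\bbE[H]=\bbE[VH]$ with $V=\bbE[G-\bbE[G]\mid\cH]$, so $|\bbE[GH]-\bbE[G]\bbE[H]|\le\bbE|V|$ (using $\|H\|_{L^\infty}\le1$); as $\bbE[V\mathbf 1_B]=\bbE[G\mathbf 1_B]-\bbE[G]\bbP(B)$ for $B\in\cH$, expanding $\bbE|V|=\bbE\big[V(\mathbf 1_{\{V\ge0\}}-\mathbf 1_{\{V<0\}})\big]$ and applying the first step with $B=\{V\ge0\}$ and $B=\{V<0\}$ gives $\bbE|V|\le 4\al(\cG,\cH)\le 4\al_n$. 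Undoing the normalizations yields \eqref{alpha} with an absolute constant in place of $\tfrac14$; since everything downstream (in particular Proposition \ref{PropMulti} and the cumulant estimates) only uses this bound up to a multiplicative constant, I would not try to optimize it.

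I do not expect any genuine obstacle here: the entire content is the defining property of $\al(\cG,\cH)$ together with the standard trick of replacing a bounded factor by a signed pair of indicator functions, performed once on $\cG$ and once on $\cH$. The only point needing a little care is the measurability bookkeeping — that $g$ depends only on the coordinates $\xi_j$ with $j\le k$ and $h$ only on those with $j\ge k+n$ — which is precisely what identifies the relevant dependence coefficient as $\al(\mathscr F_{-\infty,k},\mathscr F_{k+n,\infty})$, dominated by $\al_n$.
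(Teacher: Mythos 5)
Your proof is correct and rests on the same underlying idea as the paper's: the characterization of $\al(\cG,\cH)$ through the $L^1$-norm of $\bbE[\,\cdot\,|\cG]-\bbE[\,\cdot\,]$. The only difference is that the paper simply quotes Bradley's identity
$\al(\cG,\cH)=\frac14\sup\{\|\bbE[h|\cG]-\bbE[h]\|_{L^1}:\ h\in L^\infty(\cH),\ \|h\|_{L^\infty}\le1\}$
and concludes in one line, whereas you re-derive the covariance inequality from scratch by the double indicator/sign trick; your version is self-contained, the paper's is shorter.

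One point worth flagging: your argument yields the bound with constant $4$, not $\tfrac14$, and you are right not to worry about it --- in fact the constant $\tfrac14$ in the lemma as stated appears to be a slip, since the paper's own cited identity gives $\|\bbE[h|\cG]-\bbE[h]\|_{L^1}\le 4\al(\cG,\cH)\|h\|_{L^\infty}$ and hence $|\bbE[hg]-\bbE[g]\bbE[h]|\le 4\al_n\|g\|_{L^\infty}\|h\|_{L^\infty}$, which is the classical Ibragimov constant. Since the lemma is only used up to multiplicative constants (in Proposition \ref{PropMulti} and the cumulant estimates), this discrepancy is harmless, but strictly speaking neither your proof nor the paper's establishes the inequality with the factor $\tfrac14$ as written.
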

\begin{proof}
By \cite[Ch.4]{Br}, we have 
$$
\al(\cG,\cH)=\frac14\sup\{\|\bbE[h|\cG]-\bbE[h]\|_{L^1}: h\in L^\infty(\Omega,\cG,\textbf{P}), \|h\|_{L^\infty}\leq 1\}.
$$
Taking $g=g(...,\xi_{k-1},\xi_k)$ and $h=h(\xi_{k+n},\xi_{k+n+1},...)$, $\cG=\mathscr F_{-\infty,k}$ and $\cH=\mathscr F_{k+n,\infty}$ we get 
$$
|\bbE[hg]-\bbE[g]\bbE[h]]=|\bbE[([h|\cG]-\bbE[h])g]|\leq \frac14 \al(\cG,\cH)\|g\|_{L^\infty}\|h\|_{L^\infty}.
$$
\end{proof}

Next, is it clearly enough to prove Proposition \ref{PropMulti} when $\|\varphi\|_{L^\infty}$ and $\esssup_{\om\in\Om}(K(\om)\|\varphi_\om\|_{BV})$ do not exceed $1$, for otherwise we can just divide $\varphi$ by the maximum between the two. 
 Recall also our  assumption  that $K(\omega)e^{-\ve |m|}\leq K(\sigma^{m}\omega)\leq K(\omega)e^{\ve |m|}$ for some $\ve<\lambda/3$ (recall Remark \ref{R}). 
 

The first step in the proof of Proposition \ref{PropMulti} is the following result.
\begin{lemma}[Fiberwise multiple correlation estimates]\label{L1}
Let $B_1,B_2,...,B_m$ be nonempty intervals in the nonnegative integers so that $B_i$ is to the left of $B_{i+1}$ and $B_1$ contains $0$. Let us denote by $d_i$ the gap between $B_i$ to $B_{i+1}$ (namely the distance).
Let us fix some $\omega$ and let $f_i$ be a family of functions so that $K(\sigma^i\omega)\|f_i\|_{BV}\leq 1$ and $\|f_i\|_{L^\infty}\leq1$. Let us define $F_j=F_{B_j,\omega}=\prod_{i\in B_j}f_i\circ T_\omega^i$. Then 
$$
\left|\left(\int \left(\prod_{j=1}^{m}F_j\right)d\mu_\omega\right)-\left(\prod_{j=1}^m\int F_jd\mu_\omega\right)\right|\leq A\sum_{j=1}^{m-1}e^{-(\lambda-\ve) d_j},
$$
where $A=C^2\sup_{d\in\mathbb N}2de^{-(\lambda-\ve)d}$ and $\la$ comes from \eqref{Exp1} and \eqref{Exp2} (recall Remark \ref{R}).
\end{lemma}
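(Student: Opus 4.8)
The plan is to combine a standard telescoping identity with the transfer–operator ``pull–out'' relation and the exponential loss of memory \eqref{Exp2}. Write $b_j=\min B_j$, $a_j=\max B_j$, $P_k=\prod_{j=1}^k F_j$, and recall $d_j$ is the distance between $B_j$ and $B_{j+1}$, so $b_{j+1}=a_j+d_j$. Since each $|f_i|\le 1$ we have $|F_j|\le 1$ and $|\mu_\om(F_j)|\le 1$, so telescoping $\mu_\om(P_m)-\prod_{j}\mu_\om(F_j)$ through the intermediate quantities $\mu_\om(P_k)\prod_{j>k}\mu_\om(F_j)$ and estimating the unwanted factors by $1$ reduces the lemma to the ``single gap'' estimate
\[
\big|\mu_\om(P_{k-1}F_k)-\mu_\om(P_{k-1})\mu_\om(F_k)\big|\le A\,e^{-(\la-\ve)d_{k-1}},\qquad 2\le k\le m,
\]
and summing this over $k$ yields the claim.

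To prove the single–gap estimate, note that $F_k=\hat F_k\circ T_\om^{b_k}$, where $\hat F_k=\prod_{i\in B_k}f_i\circ T_{\sig^{b_k}\om}^{\,i-b_k}$ is a function on the fibre over $\sig^{b_k}\om$ with $|\hat F_k|\le 1$. The iterated pull–out identity $L_\om^{n}\big(g\cdot(\hat F\circ T_\om^{n})\big)=\hat F\cdot L_\om^{n}(g)$, together with the facts that $L_\om$ preserves integrals ($\mu_\om(g)=\mu_{\sig^n\om}(L_\om^n g)$) and that $\mu_\om$ is equivariant ($\mu_{\sig^{b_k}\om}(\hat F_k)=\mu_\om(F_k)$), gives
\[
\mu_\om(P_{k-1}F_k)=\mu_{\sig^{b_k}\om}\!\big(\hat F_k\cdot L_\om^{b_k}(P_{k-1})\big).
\]
Now split $L_\om^{b_k}$ into its first $a_{k-1}$ iterates and the remaining $d_{k-1}$ iterates, set $g_{k-1}=L_\om^{a_{k-1}}(P_{k-1})$ (so that $\mu_{\sig^{a_{k-1}}\om}(g_{k-1})=\mu_\om(P_{k-1})$ and $\sup|g_{k-1}|\le 1$), and apply \eqref{Exp2} to the last $d_{k-1}$ iterates to obtain $\|L_\om^{b_k}(P_{k-1})-\mu_\om(P_{k-1})\mathbf 1\|_{BV}\le K(\sig^{a_{k-1}}\om)\,\|g_{k-1}\|_{BV}\,e^{-\la d_{k-1}}$. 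Since $|\hat F_k|\le 1$ and $\|u\|_{L^1(\mu_{\sig^{b_k}\om})}\le\sup|u|\le C\|u\|_{BV}$, the single–gap estimate will follow once we bound $C\,K(\sig^{a_{k-1}}\om)\,\|g_{k-1}\|_{BV}\,e^{-\la d_{k-1}}$ by $A\,e^{-(\la-\ve)d_{k-1}}$.

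This last bound is the heart of the matter. Using the product rule for $v$ and \eqref{V} iterated, together with the scaling hypothesis $v(f_i)\le\|f_i\|_{BV}\le 1/K(\sig^i\om)$ (which is exactly what keeps each term of a product of fibrewise observables in check), one estimates $\|P_{k-1}\|_{BV}$ by a sum over $i\in B_1\cup\dots\cup B_{k-1}$ of terms comparable to $\big(\prod_{l<i}K(\sig^l\om)\big)/K(\sig^i\om)$, after which \eqref{Exp2} with $n=a_{k-1}$ gives $\|g_{k-1}\|_{BV}\le 1+K(\om)\|P_{k-1}\|_{BV}e^{-\la a_{k-1}}$. Writing $e^{-\la d_{k-1}}=e^{-(\la-\ve)d_{k-1}}e^{-\ve d_{k-1}}$ and invoking the refined temperedness of $K$ from Proposition \ref{PA} (available since we took $\ve<\tfrac13\la$) to compare the value of $K$ at the various shifted base points and to absorb the surplus decay $e^{-\ve d_{k-1}}$ against the polynomial factors produced above, one is left with a bound of the form $C^2\cdot 2d_{k-1}\,e^{-2(\la-\ve)d_{k-1}}$, and since $2d_{k-1}e^{-(\la-\ve)d_{k-1}}\le\sup_{d\in\bbN}2d\,e^{-(\la-\ve)d}$ this is at most $A\,e^{-(\la-\ve)d_{k-1}}$, as required.

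I expect the third step to be the main obstacle: controlling the growth of $\|P_{k-1}\|_{BV}$ and of its push–forward $\|g_{k-1}\|_{BV}$ under the random transfer operator, and showing that this growth is dominated by the gap contraction $e^{-\la d_{k-1}}$ up to the slack produced by $\ve$-temperedness. It is precisely here that the scaling condition $K(\sig^i\om)\|f_i\|_{BV}\le 1$ and the choice $\ve<\la/3$ are indispensable, and it is the reason the constant takes the shape $C^2\sup_d 2d\,e^{-(\la-\ve)d}$.
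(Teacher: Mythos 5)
Your reduction to a single-gap estimate via telescoping, and the pull-out identity $\mu_\om(P_{k-1}F_k)=\mu_{\sig^{b_k}\om}\big(\hat F_k\cdot L_\om^{b_k}P_{k-1}\big)$, match the paper's induction. The gap is in your third step, exactly where you predicted trouble. By splitting $L_\om^{b_k}$ into $a_{k-1}$ iterates followed by $d_{k-1}$ iterates you apply \eqref{Exp2} twice, and each application costs a factor of $K$: your bound has the shape $K(\sig^{a_{k-1}}\om)\big(1+K(\om)\|P_{k-1}\|_{BV}e^{-\la a_{k-1}}\big)e^{-\la d_{k-1}}$. Only one reciprocal factor of $K$ is available to cancel these, namely $\sup|\hat F_k|\le C\|f_{b_k}\|_{BV}\le CK(\sig^{b_k}\om)^{-1}$, harvested from the scaling hypothesis applied to the \emph{first observable of the next block} --- and you do not invoke this at all, estimating instead $|\hat F_k|\le 1$. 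Even with that refinement, which handles the constant term via $K(\sig^{a_{k-1}}\om)\cdot CK(\sig^{b_k}\om)^{-1}\le Ce^{\ve d_{k-1}}$, the second term still carries an uncancelled $K(\om)$ from the first application of \eqref{Exp2}, so your final constant is $\om$-dependent. That is not cosmetic: the lemma's constant $A$ must be uniform in $\om$ because Corollary \ref{Cor1} integrates the estimate over $\om$, and $K$ is only assumed tempered, not bounded or integrable.

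The paper avoids this by applying \eqref{Exp2} once, over all $b_k=a_{k-1}+d_{k-1}$ steps. This produces a single $K(\om)$, which is cancelled against the single $K(\sig^{b_k}\om)^{-1}$ from $\sup|\hat F_k|$ at the cost of $e^{\ve b_k}$ (Proposition \ref{PA}, with $\ve<\la/3$), and the full decay $e^{-\la b_k}$ absorbs both this loss and the linear factor $\|P_{k-1}\|_{BV}\le 2Ca_{k-1}$, since $2a_{k-1}e^{-(\la-\ve)a_{k-1}}\le\sup_{d}2de^{-(\la-\ve)d}$; what survives is precisely $Ae^{-(\la-\ve)d_{k-1}}$. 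Note also that the linear factor entering the constant is $2a_{k-1}$ (the right endpoint of $B_{k-1}$), not $2d_{k-1}$ as in your final display: it is absorbed by the decay accumulated over the blocks, not over the gap. In the uniformly expanding case, where $K$ is constant, your two-stage argument does close; in the expanding-on-the-average setting it does not.
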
  
\begin{proof}
The proof will be carried out by induction on $m$.
Let us first prove the lemma in the case $m=2$.
We first note that for all functions $g_0,g_1,...,g_q$ we have 
$$
\text{v}\left(\prod_{k=0}^{q}g_k\circ T_\omega^k\right)\leq\sum_{k=0}^q \left(\prod_{0\leq s<k}\|g_s\|_\infty\right)\cdot\left(\text{v}(g_k\circ T_\omega^k)\right)\cdot\left(\prod_{k<s\leq q}\|g_s\|_\infty\right)
$$
where $\|g_i\|_\infty=\sup\|g_i\|_{L^\infty}$,
and hence
\begin{equation}\label{U}
\left\|\prod_{k=0}^qg_k\circ T_\omega^k\right\|_{BV}\leq\prod_{k=0}^q\|g_k\|_\infty+\sum_{k=0}^q\left(\prod_{0\leq s<k}\|g_s\|_\infty\right)\cdot\left(\prod_{s=0}^{k-1}K(\sigma^s\omega)\text{v}(g_k)\right)\left(\prod_{k<s\leq q}\|g_s\|_\infty\right)
\end{equation}
where we have used \eqref{V},  that $N(\om)\leq K(\om)$ and that 
$$
\left\|\prod_{k=0}^qg_k\circ T_\omega^k\right\|_{L^1}\leq \left\|\prod_{k=0}^qg_k\circ T_\omega^k\right\|_{L^\infty}\leq \prod_{k=0}^q\|g_k\|_\infty.
$$ 
Let us write $B_1=\{0,1,...,d\}$. Taking $g_k=f_k$ for $0\leq k\leq d=q$ and noting that $K(\sigma^s\omega)\|g_{s}\|_\infty\leq C$ for some constant $C$ which depends\footnote{$C$ is a constant which satisfies that $\|g\|_\infty=\sup|g|\leq C\|g\|_{BV}$ for every complex function on $X$.} only the space $X$  
we conclude that 
$$
\|F_1\|_{BV}\leq C(d+1)\leq 2Cd.
$$

Now, if we write $B_2=\{d+n,d+n+1,...,d+n+L\}$ then
$$
\mu_\omega(F_1F_2)=\mu_\omega(F_1\cdot G_2\circ T_\omega^{d+n})=\mu_{\sigma^{n+d}\omega}(G_2 L_\omega^{n+d}F_1 )
$$
where 
$$
G_2=\prod_{u\in B_2}f_u\circ T_{\sigma^u\omega}^{u-n-d}.
$$
By \eqref{Exp2} we have,
$$
\left\|L_\omega^{n+d}F_1-\mu_\omega(F_1)\right\|_{BV}\leq K(\omega)\|F_1\|_{BV}e^{-\lambda(d+n)}\leq 2dCK(\omega)e^{-\lambda(d+n)}.
$$
Therefore, using also that $\mu_\omega$ is an equivariant family and that  (since $n+d\in B_2$)
$$\|G_2\|_{L^\infty}\leq\|f_{n+d}\|_{L^\infty}\leq CK(\sigma^{n+d}\omega)^{-1}$$
we get that
$$
\left|\mu_\omega(F_1F_2)-\mu_\omega(F_1)\mu_{\omega}(F_2)\right|=\left|\mu_{\sigma^{n+d}\omega}(G_2 L_\omega^{n+d}F_1)-\mu_\omega(F_1)\mu_{\sigma^{d+n} \omega}(G_2)\right|$$
$$
=\left|\int (L_\omega^{d+n}F_1-\mu_\omega(F_1))G_2 \, d\mu_{\sigma^{d+n} \omega}\right|\leq 2dCK(\omega)e^{-\lambda(d+n)}\|G_2\|_{L^\infty}
$$
$$\leq 2dCK(\omega)e^{-\lambda(d+n)}K(\sigma^{n+d}\omega)^{-1}\leq 2dC^2 e^{-(\lambda-\ve)(d+n)}=
(2C^2de^{-(\lambda-\ve)d})e^{-(\lambda-\ve)n}.
$$
This proves the lemma for $m=2$.

Next, let us complete the induction step. Let $d$ be the right end point of $B_{m-1}$. Then $d+d_m$ is the left end point of $B_m$ and we can write
$$
\mu_\omega\big(\prod_{k}F_k\big)=\mu_\omega \big(\prod_{k<m}F_k\cdot (G_m\circ T_{\omega}^{d+d_m})\big)=
\mu_{\sigma^{d+d_m}\om}\big(L_\omega^{d+d_m}(\prod_{k<m}F_k)\cdot G_m\big)
$$
where $G_m$ is some function. Now we observe that
$$
\left\|\prod_{k<m}F_k\right\|_{BV}\leq C(d+1)\leq 2Cd
$$
which is proved exactly as in the previous case (even though there are gaps between the blocks $B_j$, we can set $g_i=1$ when $i$ does not belong to one of the $B_j$'s, and then $\text{v}(g_i)=0$). Thus, as in the case $m=2$, we have 
$$
\left|\mu_\omega\left(\prod_{k}F_k\right)-\mu_\omega\big(F_m\big)\mu_\omega\left(\prod_{k<m}F_k\right)\right|\leq (2C^2de^{-(\lambda-\ve)d})e^{-(\lambda-\ve)d_m}.
$$
 The induction is completed by the above inequality, taking into account that $|\mu_\omega(F_m)|\leq 1$.
\end{proof} 

Integrating over $\om$ yields the following corollary of Lemma \ref{L1}.
\begin{corollary}\label{Cor1}
Let $\tau$ be the skew product. Let $B_j, 1\leq j\leq m$ be blocks as in Lemma \ref{L1}.
Set $G_j=\prod_{i\in B_j}\varphi\circ\tau^{i}$. Let us denote by $b_j$ the left end point of $B_j$.
 Then
\begin{equation}\label{EC1}
\left|\int \prod_{j=1}^{m} G_j d\mu-\int\left(\prod_{j=1}^{m}\int\left(\prod_{i\in B_j}\varphi_{\sigma^i\omega}\circ T_{\sig^{b_j}\om}^{i-b_j}
\right)d\mu_{\sigma^{b_j}\omega}\right)d\mathbb P(\omega)\right|\leq A\sum_{j=1}^{d}e^{-\lambda d_j}.
\end{equation} 
\end{corollary}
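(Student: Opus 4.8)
The plan is to derive Corollary \ref{Cor1} from Lemma \ref{L1} essentially by Fubini's theorem, but one must be careful about the bookkeeping of the blocks after integrating the fiberwise estimate against $d\bbP(\om)$. Recall that $\mu = \int \mu_\om\, d\bbP(\om)$ and that, by $\tau$-equivariance,
$$
\int \prod_{j=1}^m G_j\, d\mu = \int \Big(\int \prod_{j=1}^m \prod_{i\in B_j} \varphi_{\sigma^i\om}\circ T_\om^i\, d\mu_\om\Big) d\bbP(\om),
$$
so the left-hand integrand (before integrating over $\om$) is exactly the quantity $\int \prod_j F_{B_j,\om}\, d\mu_\om$ from Lemma \ref{L1}, with the choice $f_i = \varphi_{\sigma^i\om}$. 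First I would check the hypotheses of Lemma \ref{L1} for $\bbP$-a.e.\ $\om$: one needs $K(\sigma^i\om)\|f_i\|_{BV}\le 1$ and $|f_i|\le 1$, which hold because of the normalization $\esssup_{\om}(K(\om)\|\varphi_\om\|_{BV})\le 1$ and $\|\varphi\|_{L^\infty}\le 1$ reduced to at the start of the section (note $K(\sigma^i\om)\|\varphi_{\sigma^i\om}\|_{BV} \le \esssup K\|\varphi_\cdot\|_{BV}\le 1$). Thus Lemma \ref{L1} applies pointwise in $\om$.

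Next I would rewrite each fiberwise factor $\int F_{B_j,\om}\,d\mu_\om$ in terms of the skew product. Writing $b_j$ for the left endpoint of $B_j$, equivariance of $\{\mu_\om\}$ gives
$$
\int F_{B_j,\om}\, d\mu_\om = \int \prod_{i\in B_j}\varphi_{\sigma^i\om}\circ T_\om^{\,i}\, d\mu_\om = \int \prod_{i\in B_j}\varphi_{\sigma^i\om}\circ T_{\sigma^{b_j}\om}^{\,i-b_j}\, d\mu_{\sigma^{b_j}\om},
$$
which is precisely the $j$-th factor appearing inside the product in the statement of Corollary \ref{Cor1}. Applying Lemma \ref{L1} pointwise in $\om$ gives
$$
\Big|\int \prod_{j=1}^m F_{B_j,\om}\, d\mu_\om - \prod_{j=1}^m \int F_{B_j,\om}\, d\mu_\om\Big| \le A\sum_{j=1}^{m-1}e^{-(\la-\ve)d_j}
$$
for $\bbP$-a.e.\ $\om$, where $A = C^2\sup_{d\in\bbN} 2d e^{-(\la-\ve)d}$. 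Then I would integrate this inequality over $\om$ with respect to $\bbP$, pull the absolute value inside the integral on the left (triangle inequality / Jensen), and observe that the right-hand bound is constant in $\om$, hence survives integration unchanged.

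The only genuinely delicate point is reconciling the exponent and the index range in the displayed conclusion of the corollary with what the integration actually produces: Lemma \ref{L1} gives $A\sum_{j=1}^{m-1}e^{-(\la-\ve)d_j}$, whereas \eqref{EC1} as printed reads $A\sum_{j=1}^d e^{-\la d_j}$. I would point out that $e^{-(\la-\ve)d_j}\le$ the stated form only if $\ve$ is absorbed (or, more precisely, the intended bound is with $\la-\ve$ and the upper summation index is $m-1$, the number of gaps); since $d_j\ge 1$ for all $j$ and there are $m-1$ gaps among $m$ consecutive blocks, the sum $\sum_{j=1}^{m-1}e^{-(\la-\ve)d_j}$ is the correct object, and the displayed ``$d$'' and ``$\la$'' are harmless typographical slips for ``$m-1$'' and ``$\la-\ve$''. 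Apart from this cosmetic matching, the proof is a one-line application of Fubini, so I do not anticipate any real obstacle; the main thing to get right is simply to verify the pointwise-in-$\om$ hypotheses of Lemma \ref{L1} and to confirm that the $\om$-dependent choice $f_i=\varphi_{\sigma^i\om}$ is measurable in $\om$ so that the integration step is legitimate.
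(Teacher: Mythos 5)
Your proposal is correct and is exactly the paper's argument: the paper derives Corollary \ref{Cor1} in one line by integrating the fiberwise estimate of Lemma \ref{L1} over $\om$, with the $j$-th factor rewritten via the equivariance $(T_\om^{b_j})_*\mu_\om=\mu_{\sigma^{b_j}\om}$, just as you do. Your observations that the summation index ``$d$'' should be ``$m-1$'' and that the exponent should be $\la-\ve$ (matching Lemma \ref{L1}) are accurate readings of typographical slips in the displayed bound.
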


The next step of the proof is to estimate the second term inside the absolute value on the left hand side of \eqref{EC1}. To obtain appropriate estimates, we first need the following lemma:
\begin{lemma}\label{LemmmaAp}
Let us fix some $k\in\mathbb N$ and set
$$
F_\omega=\prod_{j=0}^k\varphi_{\sigma^k\omega}\circ T_{\omega}^k.
$$
Then for every $n\in\mathbb N$ and for $\mathbb P$ a.e. $\omega$ we have
$$
\left|\mu_\omega(F_\omega)-m(F_\om\mathcal L_{\sigma^{-n}\omega}^n\textbf{1})\right|\leq Ce^{-n(\lambda-\ve)}
$$
where $C$ is such that $\|g\|_{L^\infty}\leq C\|g\|_{BV}$ for every function $g$ on $X$ with bounded variation (recall that such a constant $C$ exists by our assumption on the variation $\text{v}(\cdot)$).
\end{lemma}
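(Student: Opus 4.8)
The plan is to compare $\mu_\om(F_\om)$ with $m\big(F_\om\,\cL^n_{\sig^{-n}\om}\textbf{1}\big)$ by writing the fiberwise measure as $\mu_\om=h_\om\,dm$ (Assumption \ref{StanAss}(1)) and exploiting the exponential convergence of $\cL^n_{\sig^{-n}\om}\textbf{1}$ to $h_\om$ provided by \eqref{Exp1}, while keeping track of the random constant $K$ through temperedness. Concretely, I would first write
$$
\mu_\om(F_\om)-m\big(F_\om\,\cL^n_{\sig^{-n}\om}\textbf{1}\big)=\int F_\om\big(h_\om-\cL^n_{\sig^{-n}\om}\textbf{1}\big)\,dm,
$$
and then bound this integral by $\sup|F_\om|$ times the $L^1(m)$-norm of $h_\om-\cL^n_{\sig^{-n}\om}\textbf{1}$, using moreover that $\|\cdot\|_{L^1(m)}\le\|\cdot\|_{BV}$ by the definition of our norm, so that
$$
\big|\mu_\om(F_\om)-m\big(F_\om\,\cL^n_{\sig^{-n}\om}\textbf{1}\big)\big|\le\sup|F_\om|\cdot\big\|\cL^n_{\sig^{-n}\om}\textbf{1}-h_\om\big\|_{BV}.
$$

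Next I would apply \eqref{Exp1} with $\sig^{-n}\om$ in place of $\om$, which bounds $\big\|\cL^n_{\sig^{-n}\om}\textbf{1}-h_\om\big\|_{BV}$ by $K(\sig^{-n}\om)e^{-\la n}$, and then invoke the temperedness estimate $K(\sig^{-n}\om)\le K(\om)e^{\ve n}$ (valid after enlarging $K$ as in Proposition \ref{PA}, with our standing choice $\ve<\frac13\la$) to turn this into $K(\om)e^{-(\la-\ve)n}$. The remaining ingredient is the pointwise bound $\sup|F_\om|\le C\,K(\om)^{-1}$: recall that we work in the normalized regime $\|\varphi\|_{L^\infty}\le1$ and $\esssup_\om\big(K(\om)\|\varphi_\om\|_{BV}\big)\le1$, so every factor $\varphi_{\sig^j\om}\circ T_\om^j$ of $F_\om$ has sup-norm at most $1$, whereas the factor indexed by $j=0$ equals $\varphi_\om$; hence $\sup|F_\om|\le\sup|\varphi_\om|\le C\|\varphi_\om\|_{BV}\le C\,K(\om)^{-1}$. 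Multiplying the two bounds yields $C\,K(\om)^{-1}\cdot K(\om)e^{-(\la-\ve)n}=C\,e^{-(\la-\ve)n}$, which is exactly the claimed estimate with the stated constant $C$.

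The only subtle point --- and the one I would treat most carefully --- is the bookkeeping of the random constant $K$: the exponential gain $e^{-\la n}$ in \eqref{Exp1} is accompanied by the factor $K(\sig^{-n}\om)$, which may grow sub-exponentially in $n$, so one spends part of the rate (an $\ve n$) to absorb it and then recovers the resulting loss from the factor $K(\om)^{-1}$ hidden in $\sup|\varphi_\om|$ in the normalized regime; this matching is precisely where the scaling hypothesis $\esssup_\om(K(\om)\|\varphi_\om\|_{BV})<\infty$ gets used. Everything else reduces to the elementary identity $\mu_\om=h_\om\,dm$ together with $\int h_\om\,dm=1$ and $\|\cdot\|_{L^1(m)}\le\|\cdot\|_{BV}$.
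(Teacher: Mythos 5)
Your proof is correct and follows essentially the same route as the paper's: write the difference as $\int F_\om(h_\om-\cL^n_{\sig^{-n}\om}\textbf{1})\,dm$, bound $\sup|F_\om|\le\sup|\varphi_\om|\le C\|\varphi_\om\|_{BV}\le CK(\om)^{-1}$ using the normalization, and control the $L^1(m)$-norm of $h_\om-\cL^n_{\sig^{-n}\om}\textbf{1}$ via \eqref{Exp1} at $\sig^{-n}\om$ together with the temperedness bound $K(\sig^{-n}\om)\le K(\om)e^{\ve n}$. The cancellation of $K(\om)^{-1}$ against $K(\om)$ that you highlight is exactly the mechanism in the paper's argument.
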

\begin{proof}
Using \eqref{Exp1}, that $K(\sig^{-n}\om)\leq e^{\ve n}K(\om)$ and that  $\|F_\omega\|_{L^\infty}\leq \|\varphi_\omega\|_{L^\infty}\leq C\|\varphi_\om\|_{BV}\leq CK(\om)^{-1}$ we obtain that
$$
\left|\mu_\omega(F_\omega)-m(F_\om\mathcal L_{\sigma^{-n}\omega}^n\textbf{1})\right|=
$$
$$
\left|\int(h_\omega-\mathcal L^n_{\sigma^{-n}\omega}\textbf{1})F_
\omega dm\right|\leq CK(\omega)^{-1}\int|h_\omega-\mathcal L^n_{\sigma^{-n}\omega}\textbf{1}|dm
$$
$$
\leq K(\omega)^{-1}e^{-\lambda n}K(\sigma^{-n}\omega)\leq Ce^{-n(\lambda-\ve)}.
$$
\end{proof}

Taking into account that $|\mu_\omega(F_\omega)|\leq1$, that $|m(F_\omega \cL_{\sigma^{-n}\omega}^n\textbf{1})|=|m(F_\om\circ T_{\sig^{-n}\om}^n)|\leq1$ and that $|\prod_{j}\alpha_j-\prod_j\beta_j|\leq\sum_j|\alpha_j-\beta_j|$ for all numbers $\alpha_j,\beta$ so that  $|\al_j|,|\beta_j|\leq1$ we get the following result directly from  Corollary \ref{Cor1} and Lemma \ref{LemmmaAp}.
\begin{corollary}\label{Cor2}
Let $b_j$ be the left end point of the block $B_j$. Let us also set $r_j=d_j/3$ and $r_0=r_1$. Then there exists a constant $A_1>0$ which does not depend on $\om$ or on the blocks so that
in the notations of  Corollary \ref{Cor1} and Lemma \ref{LemmmaAp} we have
$$
\left|\int \prod_{j=1}^{m}G_jd\mu-\int\left(\prod_{j=0}^{d}m(\varphi_{\om,j}\mathcal L^{d_j}_{\sigma^{b_j-d_j}\omega}\textbf{1})\right)d\mathbb P(\omega)\right|\leq A_1\sum_{j=1}^{m-1}e^{-(\lambda-\ve)r_j}
$$ 
where
$$
\varphi_{\omega,j}=\prod_{i\in B_j}\varphi_{\sigma^i\omega}\circ T_{\sigma^{b_j}\omega}^{i-b_j}.
$$
\end{corollary}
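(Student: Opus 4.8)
The plan is to deduce Corollary \ref{Cor2} directly from Corollary \ref{Cor1} and Lemma \ref{LemmmaAp}. Corollary \ref{Cor1} has already replaced $\int\prod_j G_j\,d\mu$, up to an error $A\sum_{j=1}^{m-1}e^{-\la d_j}$, by $\int\big(\prod_{j=1}^{m}\mu_{\sig^{b_j}\om}(\varphi_{\om,j})\big)d\bbP(\om)$; all that remains is to replace, one block at a time, the fiberwise integral $\mu_{\sig^{b_j}\om}(\varphi_{\om,j})$ by the corresponding $m$-integral against an iterate of the transfer operator appearing in the statement, and to control how the errors pile up across the $m$ factors. The accumulation is governed by the elementary estimate $\big|\prod_j\al_j-\prod_j\be_j\big|\le\sum_j|\al_j-\be_j|$, valid as soon as all the $\al_j$ and $\be_j$ lie in $[-1,1]$.

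First I would record the a priori bounds that license this estimate. After the normalization already imposed in the proof of Proposition \ref{PropMulti} we may assume $\|\varphi\|_{L^\infty}\le1$ and $\esssup_{\om}(K(\om)\|\varphi_\om\|_{BV})\le1$, whence $\|\varphi_{\om,j}\|_\infty\le1$ for every block $B_j$ and a.e.\ $\om$. Consequently each $\mu_{\sig^{b_j}\om}(\varphi_{\om,j})$ has modulus $\le1$ (it is the integral of a function bounded by $1$ against a probability measure), and so does each replacement term $m(\varphi_{\om,j}\,\cL^{n}_{\sig^{b_j-n}\om}\textbf{1})$, since by the duality defining $\cL$ it equals $m(\varphi_{\om,j}\circ T_{\sig^{b_j-n}\om}^{n})$.

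Next, for each $j$ I would apply Lemma \ref{LemmmaAp} at the base point $\sig^{b_j}\om$ --- here one uses that $\varphi_{\om,j}=\prod_{\ell=0}^{|B_j|-1}\varphi_{\sig^{\ell}(\sig^{b_j}\om)}\circ T_{\sig^{b_j}\om}^{\ell}$ is exactly of the form treated in that lemma --- with the pull-back length equal to the prescribed third of the neighbouring gap, say $r_{j-1}$, together with the convention $r_0=r_1$ covering the first block (which has no gap to its left). This yields, uniformly in $\om$ and in the blocks, $\big|\mu_{\sig^{b_j}\om}(\varphi_{\om,j})-m(\varphi_{\om,j}\,\cL^{r_{j-1}}_{\sig^{b_j-r_{j-1}}\om}\textbf{1})\big|\le Ce^{-(\la-\ve)r_{j-1}}$ with $C$ the constant from Lemma \ref{LemmmaAp}. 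Plugging the $[-1,1]$ bounds and these estimates into the product inequality and integrating over $\om$ (the bound being $\om$-independent) controls the distance between the second term of \eqref{EC1} and the product in the statement by $C\sum_{j=1}^{m}e^{-(\la-\ve)r_{j-1}}$. Adding the Corollary \ref{Cor1} error by the triangle inequality and using $r_j=d_j/3$ (so $e^{-\la d_j}\le e^{-(\la-\ve)r_j}$) together with the re-indexing $\sum_{j=1}^{m}e^{-(\la-\ve)r_{j-1}}\le2\sum_{j=1}^{m-1}e^{-(\la-\ve)r_j}$ afforded by $r_0=r_1$, everything collapses into $A_1\sum_{j=1}^{m-1}e^{-(\la-\ve)r_j}$ with, e.g., $A_1=A+2C$.

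There is no genuine obstacle: this corollary is purely a matter of assembling the two previous results. The only points that require a little attention are (a) checking that the normalization really does propagate the uniform $\|\cdot\|_\infty\le1$ bounds, which is what makes the product inequality applicable to the entire $m$-fold product rather than to a single factor, and (b) keeping the index bookkeeping straight --- attaching the pull-back length $r_{j-1}$ to the block $B_j$, honouring the boundary convention $r_0=r_1$, and disposing of degenerate cases (a gap too short for the pull-back length to be a positive integer, where the claimed bound is anyway trivial for $A_1$ large enough) --- so that the accumulated error telescopes cleanly into the single sum $A_1\sum_{j=1}^{m-1}e^{-(\la-\ve)r_j}$.
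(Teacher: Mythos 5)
Your proposal is correct and follows essentially the same route as the paper, whose (very terse) proof likewise invokes the a priori bounds $|\mu_\om(F_\om)|\le 1$ and $|m(F_\om\,\cL^n_{\sig^{-n}\om}\textbf{1})|=|m(F_\om\circ T^n_{\sig^{-n}\om})|\le 1$, the elementary inequality $|\prod_j\al_j-\prod_j\be_j|\le\sum_j|\al_j-\be_j|$, and then combines Corollary \ref{Cor1} with Lemma \ref{LemmmaAp} blockwise. Your additional care with the pull-back lengths and the boundary convention $r_0=r_1$ only makes explicit what the paper leaves implicit.
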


Now, we observe that $m(\varphi_{\om,j}\mathcal L^{d_j}_{\sigma^{n_j-d_j}\omega}\textbf{1})$ is a function of $\xi_{b_j-r_j},...,\xi_{b_{j+1}-r_j}$ (i.e. of the coordinates $\om_{b_j-r_j},...,\om_{b_{j+1}-r_j}$). Namely, in distribution it can be written as
$$
m(\varphi_{\om,j}\mathcal L^{d_j}_{\sigma^{n_j-d_j}\omega}\textbf{1})=f_j(\xi_{b_j-r_j},...,\xi_{b_{j+1}-r_j})
$$
for some measurable function $f_j$. Since
$
m(\varphi_{\om,j}\mathcal L^{d_j}_{\sigma^{n_j-d_j}\omega}\textbf{1})=m(\varphi_{\om,j}\circ T_{\sigma^{n_j-d_j}\omega}^{d_j})
$
and $|\varphi_{\om,j}|\leq 1$, we can insure that $|f_j|\leq1$.
 Using \cite[(2.20)]{AIHP} and Corollary \ref{Cor2}  we conclude that:
\begin{corollary}\label{ProMult}
Let $G_j, 1\leq j\leq m$ be as in Corollary \ref{Cor1} (defined by some blocks $B_j$ with gaps $d_j$).
There are constants $A>1$ and $\delta_0\in(0,1)$ which do not depend on the blocks so that 
$$
\left|\int \left(\prod_{j=1}^{m}G_j\right)d\mu-\left(\prod_{j=1}^{m}\int G_jd\mu\right)\right|\leq A\sum_{j=1}^{m}(\delta_0^{r_j}+\alpha([r_j])).
$$ 
\end{corollary}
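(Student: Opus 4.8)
The plan is to combine Corollary \ref{Cor2} with a standard telescoping decoupling of the approximating product, which is precisely the content of \cite[(2.20)]{AIHP}. Keeping the notation of Corollary \ref{Cor2}, with $r_j=d_j/3$ and $r_0=r_1$, and writing
$$
f_j:=m\big(\vf_{\om,j}\,\cL^{d_j}_{\sigma^{b_j-d_j}\om}\textbf{1}\big)=m\big(\vf_{\om,j}\circ T^{d_j}_{\sigma^{b_j-d_j}\om}\big),
$$
that corollary reduces the task to comparing $\int\big(\prod_{j=1}^m f_j\big)\,d\bbP$ with $\prod_{j=1}^m\int G_j\,d\mu$, at the cost of the additive error $A_1\sum_j e^{-(\la-\ve)r_j}$. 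As observed just before the statement, in distribution each $f_j$ is a measurable function of the coordinates $\xi_k$ with $k$ ranging over a finite index interval $I_j$; these intervals are ordered from left to right, and $|f_j|\le1$ since $|\vf_{\om,j}|\le1$.

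The key point is that consecutive blocks $I_j$ and $I_{j+1}$ are separated by at least $[r_j]$ indices. Indeed, the gap $d_j$ between $B_j$ and $B_{j+1}$ is only used in part: a portion of order $r_j=d_j/3$ is consumed by the backward iterates $\cL^{d_j}_{\sigma^{b_j-d_j}\om}\textbf{1}$, which replace the fiber measure $\mu_{\sigma^{b_j}\om}$ by an expression depending on finitely many coordinates, while at least a further $[r_j]$ indices of the gap remain untouched between $I_j$ and $I_{j+1}$ (with $r_0=r_1$ taking care of the leftmost block, which contains $0$). Granting this, $\prod_{k\le j}f_k$ is $\mathscr F_{-\infty,\ell}$-measurable, where $\ell$ is the right endpoint of $I_j$, while $f_{j+1}$ is $\mathscr F_{\ell+[r_j],\infty}$-measurable; applying \eqref{alpha} (with $\|f_k\|_{L^\infty}\le1$) we peel off $f_{j+1}$ at the cost $\tfrac14\al_{[r_j]}$, and iterating downward over $j$ gives
$$
\left|\int\Big(\prod_{j=1}^m f_j\Big)\,d\bbP-\prod_{j=1}^m\int f_j\,d\bbP\right|\le\frac14\sum_{j=1}^{m-1}\al_{[r_j]}.
$$

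To finish one must replace $\prod_j\int f_j\,d\bbP$ by $\prod_j\int G_j\,d\mu$. From the $\tau$-invariance of $\mu$ and the $\sig$-invariance of $\bbP$ one gets $\int G_j\,d\mu=\int\mu_{\sigma^{b_j}\om}(\vf_{\om,j})\,d\bbP(\om)$, so Lemma \ref{LemmmaAp}, integrated over $\om$, yields $\big|\int G_j\,d\mu-\int f_j\,d\bbP\big|\le Ce^{-(\la-\ve)r_j}$. All these numbers lie in $[-1,1]$, so the elementary bound $\big|\prod_j x_j-\prod_j y_j\big|\le\sum_j|x_j-y_j|$ (valid when $|x_j|,|y_j|\le1$) gives $\big|\prod_j\int f_j\,d\bbP-\prod_j\int G_j\,d\mu\big|\le C\sum_j e^{-(\la-\ve)r_j}$. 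Adding the three error contributions, setting $\del_0=e^{-(\la-\ve)}\in(0,1)$ and taking $A$ any constant larger than $1+A_1+C$, we arrive at $\big|\int\prod_j G_j\,d\mu-\prod_j\int G_j\,d\mu\big|\le A\sum_j\big(\del_0^{r_j}+\al_{[r_j]}\big)$, with $A$ and $\del_0$ visibly independent of the blocks.

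The step I expect to be the real obstacle is the combinatorial bookkeeping behind the separation claim: one must track precisely which coordinates $f_j$ depends on — those entering through $\cL^{d_j}_{\sigma^{b_j-d_j}\om}\textbf{1}$ and those entering through $\vf_{\om,j}$ via $B_j$ — and verify that the one-third split of every gap $d_j$ genuinely leaves a clean window of $[r_j]$ untouched indices between the relevant blocks, uniformly over all admissible configurations of $B_1,\dots,B_m$. Once this is secured, the telescoping estimate and the arithmetic with the exponential error terms are routine.
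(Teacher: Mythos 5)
Your argument is correct and is essentially the paper's own proof: Corollary \ref{Cor2} followed by the telescoping $\al$-mixing decoupling of \cite[(2.20)]{AIHP} (your iterated application of \eqref{alpha}), together with the replacement of $\int f_j\,d\bbP$ by $\int G_j\,d\mu$ via Lemma \ref{LemmmaAp} and the elementary bound $|\prod x_j-\prod y_j|\leq\sum|x_j-y_j|$. The separation issue you flag is precisely what the choice $r_j=d_j/3$ is designed for: the backward approximation attached to the $(j+1)$-th block reaches only $r_j$ indices back into the gap $d_j$ (with $r_0=r_1$ handling the leftmost block), so consecutive dependence windows stay separated by at least $2r_j\geq [r_j]$ uniformly over all admissible configurations, while the approximation error is still $O(e^{-(\la-\ve)r_j})$.
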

All that is left is to notice that Corollary \ref{ProMult} is a reformulation of Proposition \ref{PropMulti}, using the notations of this section.

\subsection{Limit theorems via the method of cumulants}

\subsubsection{The CLT: proof of Theorem \ref{CLT}}\label{CLT pf}
First,  by Proposition \ref{PropMulti} we have that \eqref{MixGorc} holds true with the numbers $\varrho_{i,(1+\del)k}$ and $\gamma_\del(b,k)$ specified in Proposition \ref{PropMulti} . By taking $r=2$, $A_1=\{0\}$ and $A_2=\{n\}$ in  \eqref{MixGorc} we see that
$$|\bbE_\mu[\varphi\cdot \varphi^n]|=O( \del^n+\al_{[n/3]})$$ for some $\del\in(0,1)$. Hence, if $\sum n\al_n<\infty$ then $\sum_n n|\bbE_\mu[\varphi\cdot \varphi^n]|<\infty$ and the results concerning the asymptotic variance $s^2$ follow from the general theory of (weakly) stationary processes (see \cite{IBR} and Lemma \ref{L3} below).

Now, suppose that $s^2=\lim_{n\to\infty}\frac1n\text{Var}_\mu(S_n)>0$, where $S_n=S_n\varphi$. To prove the CLT and the convergence rate  \eqref{BErate},by  applying \cite[Corollary 2.1]{Stat}, taking into account Theorem \ref{CumEst}, we get the CLT and the rate \eqref{BErate} for $S_n/\sqrt{\text{Var}(S_n)}$. 	To get the same rate for $S_n/\sqrt n$ we need the following general fact from the theory of stationary real-valued sequences, which for the sake of convenience is stated as a lemma.
\begin{lemma}\label{L3}
Let $Y_n$ be a centered weakly stationary sequence of square integrable random variables. Set $b_n=\bbE[Y_0Y_n]$ and $S_n=\sum_{j=1}^nY_j$. Suppose that $\sum_{k}k|b_k|<\infty$. Then 
$$
\lim_{n\to\infty}\frac 1n\bbE[S_n^2]=b_0+2\sum_{n\geq 1}b_n:=s^2
$$
and 
$$
\left|\frac1n\bbE[S_n^2]-s^2\right|\leq 2n^{-1}\sum_{k=1}^\infty k|b_k|.
 $$
\end{lemma}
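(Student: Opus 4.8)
The plan is to compute $\bbE[S_n^2]$ exactly and then control its deviation from $s^2$ directly, so that the limit and the quantitative bound come out simultaneously. First I would expand, using $\bbE[Y_iY_j]=b_{|i-j|}$ (weak stationarity),
$$
\bbE[S_n^2]=\sum_{i,j=1}^n b_{|i-j|}=nb_0+2\sum_{k=1}^{n-1}(n-k)b_k,
$$
since there are exactly $2(n-k)$ ordered pairs $(i,j)\in\{1,\dots,n\}^2$ with $|i-j|=k$ for $1\le k\le n-1$, and $n$ pairs with $i=j$. Dividing by $n$ gives
$$
\frac1n\bbE[S_n^2]=b_0+2\sum_{k=1}^{n-1}b_k-\frac2n\sum_{k=1}^{n-1}kb_k.
$$
Note that $\sum_k k|b_k|<\infty$ in particular forces $\sum_k|b_k|<\infty$, so $s^2=b_0+2\sum_{k\ge1}b_k$ is well defined.

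Next I would subtract $s^2$ and rearrange:
$$
\frac1n\bbE[S_n^2]-s^2=-2\sum_{k=n}^\infty b_k-\frac2n\sum_{k=1}^{n-1}kb_k.
$$
Taking absolute values and using that $k\ge n$ in the first sum (so $|b_k|\le\frac kn|b_k|$), while $k\le n-1$ in the second,
$$
\Big|\frac1n\bbE[S_n^2]-s^2\Big|\le\frac2n\sum_{k=n}^\infty k|b_k|+\frac2n\sum_{k=1}^{n-1}k|b_k|=\frac2n\sum_{k=1}^\infty k|b_k|,
$$
which is precisely the claimed bound. Since the right-hand side is the tail of a convergent series divided by $n$, it tends to $0$ as $n\to\infty$, and this also yields $\frac1n\bbE[S_n^2]\to s^2$, completing the argument.

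There is really no serious obstacle here; the only points requiring a little care are the combinatorial count of pairs at a given distance and the slightly delicate rearrangement that absorbs both error terms into the single expression $\frac2n\sum_k k|b_k|$ rather than a weaker bound of the form $\frac Cn\sum_k k|b_k|$. Everything else is standard bookkeeping for weakly stationary sequences, in the spirit of Kronecker's lemma, cf. \cite{IBR}.
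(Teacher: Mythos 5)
Your proof is correct and follows essentially the same route as the paper: expand $\bbE[S_n^2]=nb_0+2\sum_{k=1}^{n-1}(n-k)b_k$, subtract $s^2$ to isolate the tail $\sum_{k\geq n}b_k$ and the weighted sum $n^{-1}\sum_{k<n}kb_k$, and bound both by $n^{-1}\sum_k k|b_k|$. No differences worth noting.
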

Let us give a reminder of the short proof. We have $\frac1n\bbE[S_n^2]=\sum_{k=1}^{n-1}(1-k/n)b_k+b_0$ and so 
$$
\left|\frac1n\bbE[S_n^2]-s^2\right|=\left|2\sum_{k=n}^\infty b_k+2n^{-1}\sum_{k=1}^{n-1}kb_k\right|\leq 
2n^{-1}\left(\sum_{k=n}^\infty k|b_k|+\sum_{k=1}^{n-1}k|b_k|\right)\leq 2n^{-1}\sum_{k\geq 1}k|b_k|.
$$
Using this lemma together with \cite[Lemma 3.3]{BE} with $a=2$ and that 
$$
\left\|\frac{S_n}{\sqrt{\text{Var}(S_n)}}-\frac{S_n}{s\sqrt n}\right\|_{L^2}
=\|S_n\|_{L^2}\left|\frac{1}{\sqrt{\text{Var}(S_n)}}-\frac1{s\sqrt n}\right|=O(n^{1/2})\cdot O(n^{-3/2})=O(n^{-1})
$$
we obtain \eqref{BErate}.

\subsubsection{A moderate deviations principle, stretched exponential concentration inequalities and 	Rosenthal type estimates: proof of Theorems \ref{Thm:ModDevNonc}, \ref{Thm2.4} and \ref{MomThm}.}
\label{MDP pf}
\,
\,

First, Theorem \ref{Thm:ModDevNonc} follows from Theorem \ref{CumEst} and \cite[Lemma 2.3]{Stat}. 
 The estimates \eqref{ModDev3} stated in Theorem \ref{Thm2.4}  follow from Theorem \ref{CumEst}  and \cite[Lemma 2.3]{DE} (which is a consequence of \cite[Lemma 2.3]{Stat}).
  The moderate deviations principle stated in Theorem \ref{Thm2.4} follows from Theorem \ref{CumEst} and \cite[Theorem 1.1]{DE}. We note that the conditions of  \cite[Lemma 2.3]{Stat}, \cite[Lemma 2.3]{DE}  and  \cite[Theorem 1.1]{DE} are certain estimates on the growth rates (in $k$) of the cumulants  $\Gamma_k(S_n)$, and the role of  Theorem \ref{CumEst} is that it shows that the conditions of all of these results are in force in the setup of this paper.

\subsection{A functional CLT via the method of cumulants: proof of Theorem \ref{FCLT}}\label{FCLT sec}
Let us first show that the sequence $\cS_n$ is tight. By Theorem \ref{MomThm} we have that 
$$
\|S_n\|_{4}=O(\sqrt n)
$$
where $\|\cdot\|_4=\|\cdot\|_{L^4}$, and therefore, using also stationarity and the H\"older inequality we get that for all $t_1<t_2\leq r_1<r_2$,
$$
\bbE\left[\left(\cS_{n}(r_2)-\cS_{n}(r_1)\right)^2\left(\cS_{n}(t_2)-\cS_{n}(t_1)\right)^2\right]\leq 
\|\cS_{n}(r_2)-\cS_{n}(r_1)\|_4^2\|\cS_{n}(t_2)-\cS_{n}(t_1)\|_4^2$$$$
\leq C\left(\frac{[r_2n]-[t_1n]}{n}\right)^2.
$$
Thus, by \cite[Ch.15]{Bil},\,  $\cS_n(\cdot)$ is a tight sequence in the Skorokhod space $D[0,1]$.

Now let us show that the finite-dimensional distributions converge. Let us fix some $t_1<t_2<...<t_d$. Set $X_k=\varphi\circ\tau^k$. Next, let us recall the following general fact. Given a vector valued sequence of random variables $Y_n=(Y_{1,n},...,Y_{d,n})$, 
by the multidimensional version of Levi's theorem, in order to show that  $Y_n$ converges in distribution as $n\to\infty$ towards a given random variable $\cZ$,
it is enough to show that 
for every $a\in\bbR^d$ we have 
$$
\lim_{n\to\infty}\bbE[e^{i(a\cdot Y_n)}]=\bbE[e^{i(a\cdot\cZ)}].
$$
Therefore, it is enough to show that any linear combination of $Y_{j,n}, j=1,2,...,d$ converges in distribution towards the corresponding linear combination of  the coordinates of $\cZ$.
Returning to our problem, to obtain the appropriate convergence of the distribution of $(\cS_n(it_j))_{j=1}^{d}$
it is enough to show that any linear combination of $\cS_{n}(t_j)$ converges towards a centered normal random variable with an appropriate variance. More precisely, 
let $a_1,...,a_d\in\bbR$. Then we need to show that $\sum_{j=1}^{d}a_j\cS_{n}(t_j)$ converges in distribution towards a centered normal random variable with variance 
$$
s^2\sum_{j=1}^d\left(a_j+....+a_d\right)^2(t_j-t_{j-1})
$$ 
where $t_0=0$ and $s^2=\lim_{n\to\infty}\frac 1n\bbE[S_n^2]$. We first notice that 
$$
\sum_{j=1}^da_j \cS_{n}(t_j)=n^{-1/2}\sum_{j=1}^d(a_j+...+a_d)\left(S_{[nt_j]}-S_{[nt_{j-1}]}\right)
$$ 
where we set $t_0=0$ and $S_0=0$.
Thus, using stationarity, we have
$$
\bbE\left[\left(\sum_{j=1}^{d}a_j\cS_{n}(t_j)\right)^2\right]=n^{-1}\sum_{j=1}^d\left(a_j+....+a_d\right)^2\bbE[S_{[nt_j]-[nt_{j-1}]}^2]
$$
$$
+2n^{-1}\sum_{1\leq j_1<j_2\leq d}(a_{j_1}+...+a_{j_d})(a_{j_2}+...+a_{j_d})\bbE\left[\big(S_{[nt_{j_2}]}-S_{[nt_{j_2-1}]}\big)\big(S_{[nt_{j_1}]}-S_{[nt_{j_1-1}]}\big)\right].
$$
Now, the first summand on the above right hand side converges to 
$$
s^2\sum_{j=1}^d\left(a_j+....+a_d\right)^2(t_j-t_{j-1}),
$$ 
while the second summand (the double sum) converges to $0$ because $|\bbE[\varphi\cdot \varphi\circ\tau^n]|$ converges to $0$ stretched exponentially fast. Therefore, the asymptotic variance of $\sum_{j=1}^{d}a_j\cS_{n}(t_j)$ has the desired form. Now, let us consider the following array of random variables. Set  
$$Y_k=Y_k^{(n,a_1,...,a_d,t_1,...,t_d)}=(a_1+...+a_j)\varphi\circ\tau^k\,\text{ if }\, [nt_{j-1}]\leq k<[nt_j].$$
Then, 
$$
\sum_{j=1}^d(a_j+...+a_d)\left(S_{[nt_j]}-S_{[nt_{j-1}]}\right)=
\sum_{j=1}^d(a_j+...+a_d)\sum_{s=[nt_{j-1}]}^{[nt_j]-1}\varphi\circ\tau^s
$$
$$
=\sum_{j=1}^d(a_j+...+a_d)\sum_{s=0}^{[nt_d]-1}\bbI([nt_{j-1}]\leq s<[nt_j])\varphi\circ\tau^s=
\sum_{s=0}^{[nt_d]-1}Y_s.
$$
On the other other hand,
arguing as in the proof of Theorem \ref{CumEst} (replacing each appearance of $\varphi\circ\tau^k$ by $Y_k$) we get the same kind of estimates on the cumulants of 
$$
\tilde S_n:=\sum_{s=0}^{[nt_d]-1}Y_s,
$$
that is, there exists a constant $c_0$ which might depend on $t_j$ and $a_j$ so that for every $k$ we have 
$$
|\Gam_k(\tilde S_n)|\leq n(k!)^{1+\gam}(c_0)^{k-2}.
$$
Thus, by applying \cite[Corollary 2.1]{Stat} we get that
$$
\sum_{s=0}^{[nt_d]-1}Y_s^{(n,a_1,...,a_d)}/w_n
$$
converges towards the standard normal distribution,
where $w_n$ is the standard deviation of the numerator. Note that, as we have shown, $w_n^2/n\to s^2\sum_{j=1}^d\left(a_1+....+a_d\right)^2(t_j-t_{j-1})$, which is positive unless either $s=0$ or $ a_1=...=a_d=0$, which are both trivial cases.  Thus, in any case we obtain the desired convergence of the linear combination
$
\sum_{j=1}^d a_j\cS_{n}(t_j)
$
and the proof of Theorem \ref{FCLT} is complete.

\section{Limit theorems via martingale approximation for $\phi$ and $\psi$ mixing driving processes}\label{MarSec}
\subsection{Some expectation estimates using mixing coefficients}
In the course of the proof of Theorem \ref{ASIP Mart} we will need the following two relatively simple 
lemmas.

\begin{lemma}\label{PhiLem}
Let $\cG,\cH$ be two sub-$\sig$-algebras of a given $\sig$-algebra on some space measure space. Let $g$ be a real-valued bounded $\cG$-measurable function and $h$ be an $\cH$-measurable real-valued integrable function. 
Then 
$$
\left|\bbE[hg]-\bbE[h]\bbE[g]\right|\leq\frac12\|h\|_{L^\infty}\|g\|_{L^1}\phi(\cG,\cH)
$$
\end{lemma}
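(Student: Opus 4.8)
The plan is to reduce the covariance $\bbE[hg]-\bbE[h]\bbE[g]$ to an estimate on conditional expectations and then invoke the defining property of $\phi$-mixing for conditional probabilities. Since $g$ is $\cG$-measurable one has $\bbE[hg\mid\cG]=g\,\bbE[h\mid\cG]$, and since $\bbE[h]$ is a constant,
$$
\bbE[hg]-\bbE[h]\bbE[g]=\bbE\big[g\big(\bbE[h\mid\cG]-\bbE[h]\big)\big],
$$
so that $\big|\bbE[hg]-\bbE[h]\bbE[g]\big|\le\|g\|_{L^1}\,\big\|\bbE[h\mid\cG]-\bbE[h]\big\|_{L^\infty}$. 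Thus it suffices to bound $\big\|\bbE[h\mid\cG]-\bbE[h]\big\|_{L^\infty}$ by a constant multiple of $\phi(\cG,\cH)\|h\|_{L^\infty}$.

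For this I would first record the pointwise estimate $|\bbP(B\mid\cG)-\bbP(B)|\le\phi(\cG,\cH)$ a.s.\ for every $B\in\cH$: if the set $A=\{\bbP(B\mid\cG)>\bbP(B)+\phi(\cG,\cH)\}\in\cG$ had positive probability, integrating the identity $\bbE[\mathbf 1_A\,\bbP(B\mid\cG)]=\bbP(A\cap B)$ over $A$ would force $\bbP(B\mid A)>\bbP(B)+\phi(\cG,\cH)$, contradicting the definition of $\phi(\cG,\cH)$; the reverse inequality is symmetric. Next, for bounded $h$ I would use the layer-cake representation: after the affine normalization reducing to $0\le h\le1$, one has $h=\int_0^1\mathbf 1_{\{h>t\}}\,dt$ with each $\{h>t\}\in\cH$, hence $\bbE[h\mid\cG]-\bbE[h]=\int_0^1\big(\bbP(h>t\mid\cG)-\bbP(h>t)\big)\,dt$, and the integrand is a.s.\ bounded by $\phi(\cG,\cH)$; undoing the normalization gives $\big\|\bbE[h\mid\cG]-\bbE[h]\big\|_{L^\infty}\le 2\phi(\cG,\cH)\|h\|_{L^\infty}$ (with the sharper constant $\phi(\cG,\cH)\|h\|_{L^\infty}$ already when $h\ge0$). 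Substituting into the display above completes the proof.

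The only genuinely delicate point is the one-sidedness of $\phi$: one must condition on $\cG$ — the first argument of $\phi(\cG,\cH)$ — and let the level sets $\{h>t\}$ range over $\cH$, which is what makes the coefficient come out as $\phi(\cG,\cH)$ rather than $\phi(\cH,\cG)$. This also dictates which factor enters through its supremum norm (the $\cH$-measurable one) and which through its $L^1$ norm (the $\cG$-measurable one $g$). Everything else is routine; the precise numerical constant is whatever the bookkeeping above yields and plays no role in the applications in Section~\ref{MarSec}.
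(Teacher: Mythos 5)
Your argument is correct and follows essentially the same route as the paper: reduce the covariance to $\bbE\bigl[g\bigl(\bbE[h\mid\cG]-\bbE[h]\bigr)\bigr]$ and then bound $\|\bbE[h\mid\cG]-\bbE[h]\|_{L^\infty}$ by a multiple of $\phi(\cG,\cH)\|h\|_{L^\infty}$. The only difference is that the paper imports that last bound wholesale from Bradley \cite[Ch.~4]{Br}, whereas you derive it from scratch via the pointwise estimate $|\bbP(B\mid\cG)-\bbP(B)|\le\phi(\cG,\cH)$ a.s.\ and the layer-cake representation; both steps of your derivation are sound.

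One point worth flagging: your bookkeeping yields the constant $2$ (after the affine normalization for signed $h$), not the $\tfrac12$ stated in the lemma, and in fact $2$ is the correct sharp constant --- the stated $\tfrac12$ cannot hold. Take $B$ with $\bbP(B)=\tfrac12$, $\cG=\cH=\sigma(B)$ and $g=h=\mathbf{1}_B-\mathbf{1}_{B^c}$; then $\phi(\cG,\cH)=\tfrac12$, $\bbE[hg]=1$, $\bbE[h]=\bbE[g]=0$, $\|h\|_{L^\infty}=\|g\|_{L^1}=1$, so the left-hand side equals $1$ while $\tfrac12\phi\|h\|_{L^\infty}\|g\|_{L^1}=\tfrac14$; the bound with constant $2$ is attained with equality. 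The discrepancy comes from the paper misquoting Bradley's identity, which reads $2\phi(\cG,\cH)=\sup\{\|\bbE[h\mid\cG]-\bbE[h]\|_{L^\infty}:\ h\in L^\infty(\cH),\ \|h\|_{L^\infty}\le1\}$, i.e.\ it gives $\|\bbE[h\mid\cG]-\bbE[h]\|_{L^\infty}\le 2\phi(\cG,\cH)\|h\|_{L^\infty}$ rather than $\tfrac12\phi(\cG,\cH)\|h\|_{L^\infty}$. Since the lemma is only invoked in Section~\ref{MarSec} through unspecified constants, your version with constant $2$ is the one that should be used, and nothing downstream is affected.
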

\begin{proof}
By \cite[Ch. 4]{Br} we have
$$
\|\bbE[h|\cG]-\bbE[h]\|_{L^\infty}\leq \frac12\|h\|_{L^\infty}\phi(\cG,\cH)
$$
which clearly implies the lemma.
\end{proof}

The next result is:
\begin{lemma}\label{PsiLem}
Let $\cG,\cH$ be two sub-$\sig$-algebras of a given $\sig$-algebra on some space measure space. Let $g$ a real-valued bounded $\cG$-measurable function and $h$ be an $\cH$-measurable real-valued integrable function. 
Suppose also that $\psi=\psi(\cG,\cH)<1$. Then 
$$
\left|\bbE[hg]-\bbE[h]\bbE[g]\right|\leq 4\|hg\|_{L^1}C_{\psi}\psi
$$
where $C_{\psi}=(1-\psi)^{-1}$.
\end{lemma}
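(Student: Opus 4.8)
The plan is to establish the estimate for indicator functions first, then bootstrap it to nonnegative simple functions by linearity, to nonnegative integrable functions by monotone convergence, and finally to general $g,h$ by splitting into positive and negative parts.

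First I would record the only place where the hypothesis $\psi<1$ enters. For $A\in\cG$ and $B\in\cH$ with $\bbP(A)\bbP(B)>0$, the definition of $\psi(\cG,\cH)$ gives $\bbP(A\cap B)\geq(1-\psi)\bbP(A)\bbP(B)$, hence $\bbP(A)\bbP(B)\leq C_\psi\,\bbP(A\cap B)$ with $C_\psi=(1-\psi)^{-1}$, and therefore
\[
\bigl|\bbP(A\cap B)-\bbP(A)\bbP(B)\bigr|\leq\psi\,\bbP(A)\bbP(B)\leq C_\psi\psi\,\bbP(A\cap B).
\]
This is exactly the claimed bound for $g=\mathbf{1}_A$, $h=\mathbf{1}_B$; the case $\bbP(A)\bbP(B)=0$ is trivial since then $\bbP(A\cap B)=0$ as well.

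Next, for nonnegative simple functions $g=\sum_i a_i\mathbf{1}_{A_i}$ (with $a_i\geq0$, $A_i\in\cG$) and $h=\sum_j b_j\mathbf{1}_{B_j}$ (with $b_j\geq0$, $B_j\in\cH$) I would expand
\[
\bbE[hg]-\bbE[h]\bbE[g]=\sum_{i,j}a_ib_j\bigl(\bbP(A_i\cap B_j)-\bbP(A_i)\bbP(B_j)\bigr),
\]
apply the indicator estimate termwise, and use the triangle inequality to get the bound $C_\psi\psi\sum_{i,j}a_ib_j\bbP(A_i\cap B_j)=C_\psi\psi\,\bbE[hg]=C_\psi\psi\|hg\|_{L^1}$, the last two equalities using $h,g\geq0$. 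For general nonnegative $g\in L^\infty$ and $h\in L^1$ I would take nondecreasing sequences of nonnegative simple functions $g_n\uparrow g$ ($\cG$-measurable) and $h_n\uparrow h$ ($\cH$-measurable); since $|h_ng_n|\leq\|g\|_{L^\infty}|h|\in L^1$, monotone and dominated convergence give $\bbE[h_ng_n]\to\bbE[hg]$, $\bbE[h_n]\to\bbE[h]$, $\bbE[g_n]\to\bbE[g]$ and $\|h_ng_n\|_{L^1}\to\|hg\|_{L^1}$, so the inequality passes to the limit.

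Finally, for arbitrary real-valued bounded $\cG$-measurable $g$ and integrable $\cH$-measurable $h$, I would write $g=g^+-g^-$, $h=h^+-h^-$, so that $\bbE[hg]-\bbE[h]\bbE[g]$ is an alternating sum over $\varepsilon,\delta\in\{+,-\}$ of the terms $\bbE[h^\varepsilon g^\delta]-\bbE[h^\varepsilon]\bbE[g^\delta]$. Each $g^\delta$ is bounded and $\cG$-measurable and each $h^\varepsilon$ is nonnegative, integrable and $\cH$-measurable, so the nonnegative case bounds the $(\varepsilon,\delta)$ term by $C_\psi\psi\,\bbE[h^\varepsilon g^\delta]$; summing the four terms and using $\sum_{\varepsilon,\delta}h^\varepsilon g^\delta=|h|\,|g|$ gives $\bigl|\bbE[hg]-\bbE[h]\bbE[g]\bigr|\leq C_\psi\psi\,\bbE[|h|\,|g|]=C_\psi\psi\|hg\|_{L^1}$. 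I do not expect a genuine obstacle here: the argument is a routine measure-theoretic approximation, and the only subtle points are the inversion $\bbP(A)\bbP(B)\leq C_\psi\bbP(A\cap B)$ (which is where $\psi<1$ is needed) and keeping the approximation compatible with $h$ being merely integrable, handled by the domination $|h_ng_n|\leq\|g\|_{L^\infty}|h|$.
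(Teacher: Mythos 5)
Your proof is correct, and it reaches the same two structural pivots as the paper's argument --- the inversion $\bbE[h]\bbE[g]\le C_\psi\,\bbE[hg]$ for nonnegative functions (which is the only place $\psi<1$ is used) and the final decomposition $h=h^+-h^-$, $g=g^+-g^-$ --- but it gets to the first pivot by a different route. The paper establishes the intermediate estimate $\left|\bbE[hg]-\bbE[h]\bbE[g]\right|\le\psi\,\bbE[h]\bbE[g]$ for nonnegative $h,g$ in one line by citing the covariance inequality $\|\bbE[h|\cG]-\bbE[h]\|_{L^\infty}\le\|h\|_{L^1}\psi(\cG,\cH)$ from Bradley's book and pairing it with $g$; you instead prove the same estimate from scratch, starting from the definition of $\psi$ on pairs of events, extending to nonnegative simple functions by bilinearity, and then to general nonnegative $g\in L^\infty$, $h\in L^1$ by monotone/dominated convergence (with the correct domination $|h_ng_n|\le\|g\|_{L^\infty}|h|$ to handle $h$ being merely integrable). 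Your version is entirely self-contained and makes transparent exactly where the hypothesis $\psi<1$ enters, at the cost of a longer approximation argument; the paper's version is shorter but leans on an external reference. Both are complete, and the final bilinearity step summing the four terms $\bbE[h^\ve g^\del]$ to $\bbE[|h|\,|g|]=\|hg\|_{L^1}$ is identical in the two proofs.
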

\begin{proof}
By \cite[Ch.4]{Br} we have, 
$$
\|\bbE[h|\cG]-\bbE[h]\|_{L^\infty}\leq \|h\|_{L^1}\psi(\cG,\cH).
$$
Hence 
$$
\left|\bbE[hg]-\bbE[h]\bbE[g]\right|\leq \|h\|_{L^1}\|g\|_{L^1}\psi.
$$
Taking $h,g\geq 0$ we get that 
$$
\left|\bbE[hg]-\bbE[h]\bbE[g]\right|\leq \bbE[h]\bbE[g]\psi.
$$
Thus, 
$$
\bbE[h]\bbE[g]\leq (1-\psi)^{-1}\bbE[hg]=C_{\psi}\bbE[hg].
$$
Therefore, for nonnegative functions we have
$$
\left|\bbE[hg]-\bbE[h]\bbE[g]\right|\leq C_{\psi}\psi\bbE[hg].
$$
Now the general result follows by writing $h=h^+-h^{-}$ and $g=g^{+}-g^{-}$ where $h^{\pm}$ and $g^{\pm}$ are nonnegative functions so that $h^{+}+h^{-}=|h|$ and $g^{+}+g^{-}=|g|$, and using that both $(g,h)\to \bbE[g]\bbE[h]$ and $(g,h)\to\bbE[hg]$ are bilinear in $(g,h)$. 
\end{proof}

\subsection{Convergence of the iterates of the transfer operator with respect to a sub-$\sig$-algebra}
Let $\cF_0$ be the $\sigma$-algebra generated by the map $\pi(\om,x)=((\om_j)_{j\geq 0},x)$, namely the one generated by  $\cB$ and the coordinates with non-negative indexes in the $\om$ direction. Then $(\tau^{-k}\cF_0)_{k\geq0}$ is a decreasing sequence of $\sig$-algebras and $\tau^{-k}\cF_0$ is generated by $\tau^k$ and the coordinates $\om_{j}$ for $j\geq k$. In particular $\tau$ preserves $\cF_0$.

Next, let us define a transfer operator with respect to $\cF_0$. For each function $g\in L^1(\mu)$ there is a unique $\cF_0$-measurable function $G$ so that 
$$
\bbE[g|\tau^{-1}\cF_0]=G\circ\tau.
$$
Let us define $\cK g=G$, where we formally set $G$ to be $0$ outside the image of $\tau$ (if $\tau$ is not onto). Then 
$$
\bbE[g|\tau^{-1}\cF_0]=\cK g\circ\tau.
$$
Notice that for $g\in L^1(\Om\times X,\cF_0,\mu), f\in L^\infty(\Om\times X,\cF_0,\mu)$ we have
$$
\int (\cK g) f\,d\mu=\int (\cK g\circ\tau)f\circ\tau \,d\mu=\int \bbE[g|\tau^{-1}\cF_0]\cdot f\circ\tau \,d\mu=
\int g\cdot f\circ\tau \,d\mu
$$
and therefore $\cK$ can also be defined using the usual duality relation.
That is, it is the transfer operator of $\tau$ with respect to $(\Om\times X,\cF_0,\mu)$.

The proof of Theorems \ref{Mart} and \ref{ASIP Mart} is based on the following result.

\begin{proposition}\label{MainL}
Under the assumptions of Theorems \ref{Mart} and \ref{ASIP Mart}, and when $\mu(\varphi)=0$ we have the following:

(i) We have
\begin{equation}\label{2nd}
\left\|\cK^n\varphi\right\|_{L^\infty}\leq C\big(e^{-(\la-2\ve)n/2}+\psi_{[n/2]}\big):=C\gamma_{2,n}.
\end{equation}
Moreover, if  $h_\om \geq c^{-1}>0$ for some constant $c>1$ then
\begin{equation}\label{1st}
\left\|\cK^n\varphi\right\|_{L^\infty}\leq Cc\big(e^{-(\la-2\ve)n/2}+\phi_{[n/2],R}\big):=C\gamma_{1,n}.
\end{equation}
Here $C=C_{\varphi}$ is a constant having the form $C_{\varphi}=AC_{\text{v}}\esssup_{\om\in\Om}(K(\om)^2\|\varphi_\om\|_{BV})$
where $A$ is an absolute constant and $C_0$ is any constant satisfying $\|g\|_{L^\infty}\leq C_0\|g\|_{BV}$ and 
$\|fg\|_{BV}\leq C_0\|g\|_{BV}\|f\|_{BV}$ for all functions $g,f:X\to\bbC$.
\vskip0.1cm
(ii)  We have
$$
\left\|\cK^i(\varphi \cK^j\varphi)-\mu\big(\cK^i(\varphi \cK^j\varphi)\big)\right\|_{L^\infty}\leq  C\gamma_{2,\max(i,j)}.
$$
If $h_\om \geq c^{-1}>0$ for some constant $c>1$ then
$$
\left\|\cK^i(\varphi \cK^j\varphi)-\mu\big(\cK^i(\varphi \cK^j\varphi)\big)\right\|_{L^\infty}\leq Cc\gamma_{1,\max(i,j)}.
$$
\end{proposition}

\begin{proof}[Proof of Theorems \ref{Mart} and \ref{ASIP Mart} based on Proposition \ref{MainL}]
First, theorem \ref{Mart} (i) follows since if we set $\chi=\sum_{n=1}^\infty K^n\varphi$ and $u=\varphi+\chi\circ\tau-\chi$, then $\|\chi\|_{L^\infty}<\infty$ and
$(u\circ \tau^n)$ is a reverse martingale difference with respect to the reverse filtration $\{\tau^{-n}\cF_0\}$. Moreover, the differences $u\circ \tau^n$ are uniformly bounded (as $\chi$ and $\varphi$ are in $L^\infty$). Thus by the Azuma-Hoeffding inequality for every $\be>0$ we have,
$$
\bbE_\mu[e^{\la\sum_{j=0}^{n-1}u\circ\tau^j}]\leq e^{\be^2n\|u\|_{L^\infty}^2}.
$$
Now the proof proceeds by using the Chernoff bounding method:
 by the Markov inequality for all $t>0$ we have
$$
\mu\left\{\sum_{j=0}^{n-1}u\circ\tau^j\geq tn\right\}\leq e^{-\be tn}e^{\be^2n\|u\|_{L^\infty}^2}.
$$
Taking $\be=\be_t=\frac{t}{2\|u\|_{L^\infty}}$ and replacing $u$ with $-u$ we get that
$$
\mu\left\{\pm\sum_{j=0}^{n-1}u\circ\tau^j\geq tn\right\}\leq e^{-\frac{nt^2}{4\|u\|_{L^\infty}}}.
$$
The proof of  Theorem \ref{Mart} (i) is completed now by noticing that 
\begin{equation}\label{Not}
\left\|S_n\varphi-\sum_{j=0}^{n-1}u\circ\tau^j\right\|_{L^\infty}=\left\|\chi-\chi\circ\tau^n\right\|_{L
^\infty}\leq 2\|\chi\|_{L^\infty}.
\end{equation}

Next, the proof of Theorem \ref{Mart} (ii) is completed by applying \cite[Proposition 7]{MPU} with the reverse martingale $(u\circ\tau^n)$ and using \eqref{Not}.

In order to prove Theorem \ref{ASIP Mart}, we apply  \cite[Theorem 3.2]{CM} with the bounded function $\varphi$ and the probability preserving system $(\Om\times X,\cF_0,\mu,\tau)$, whose transfer operator is $\cK$. Now, since we have assumed that $\mu(\varphi)=0$, in order for the conditions of \cite[Theorem 3.2]{CM} to be in force we need the following three estimates to hold
$$
\sum_{n\geq 2}n^{5/2}(\log n)^3\|\cK\varphi\|_{L^4(\mu)}^4<\infty\,\text{ and }\,\sum_{n\geq 2}n(\log n)^3\|\cK\varphi\|_{L^2(\mu)}^2<\infty
$$
and 
$$
\sum_{n\geq 2}\frac{(\log n)^3}{n^2}\left(\sum_{i=1}^{n}\sum_{j=0}^{n-i}\left\|\cK^i\big(\varphi \cK^j(\varphi)\big)-\mu\big(\varphi \cK^j(\varphi)\big)\right\|_{L^2(\mu)}\right)^2<\infty.
$$
The above three conditions are verified
by Proposition \ref{MainL} and the mixing rates specified in the formulation of Theorem \ref{ASIP Mart}, and the proof of Theorem \ref{ASIP Mart} is complete.
\end{proof}

\begin{proof}[Proof of Proposition \ref{MainL}]
(i) Since $L^\infty(\mu)$ is the dual of $L^1(\mu)$  and $\varphi$ and $\cK^n\varphi$ are $\cF_0$-measurable, it is enough to show that for every $g\in L^1(\Om\times X,\cF_0,\mu)$ so that $\|g\|_{L^1}\leq 1$ we have 
$$
\left|\int g\cdot(\cK^n\varphi)d\mu\right|\leq\gamma_n\|g\|_{L^1(\mu)}
$$
where $\gamma_n$ is one of the desired upper bounds.
To achieve that let us first note that $\cK^n$ is the dual of the restriction of the Koopman operator $f\to f\circ\tau^n$ acting on $\cF_0$-measurable functions. Thus, 
\begin{equation}\label{CrucialRel}
\int g\cdot(\cK^n\varphi)d\mu=\int \varphi\cdot (g\circ\tau^n) d\mu=
\int\left(\int\varphi_{\om}\cdot (g_{\sigma^n\om }\circ T_\om^n)\, d\mu_\om\right)d\bbP(\om)
\end{equation}
$$
=\int\left(\int (L_\om^n\varphi_{\om})\cdot g_{\sig^n\om}\, d\mu_{\sigma^n\om}\right)d\bbP(\om). 
$$

Now, using \eqref{Exp2} and that $\|\varphi\|_{K}=\esssup_{\om\in\Om}(K(\om)\|\varphi_\om\|_{BV})<\infty$  we get that 
$$
\left\|L_\om^n\varphi_{\om}-\mu_\om(\varphi_\om)\right\|_{L^\infty}\leq C_0\|\varphi\|_{K}e^{-\la n}.
$$
Hence, using also the $\sig$-invariance of $\bbP$,
$$
\int g\cdot(\cK^n\varphi)d\mu=\int \mu_\om(\varphi_{\om})\mu_{\sigma^n\om}(g_{\sigma^n\om})d\bbP(\om)+I
$$
where $|I|\leq Ce^{-\la n}\|g\|_{L^1(\mu)}$. 
Next, let us write 
$$
\mu_{\sigma^n\om}(g_{\sigma^n\om})=m(g_{\sig^n\om}h_{\sigma^n\om}).
$$
By \eqref{Exp1} we have 
$$
\left\|h_{\sigma^n\om}-\cL_{\sigma^{[n/2]}\om}^{n-[n/2]}\textbf{1}\right\|_{L^\infty}\leq C_0K(\sigma^{[n/2]}\om)e^{-\la n/2}\leq  C_0K(\om)e^{-(\la-\ve)n/2}.
$$
Observe next that since $\|1/h_\om\|_{BV}\leq K(\om)$ we have
$$m(|g|)=\mu_{\sig^n\om}(|g|/h_{\sigma^n\om})\leq C_0K(\sigma^n\om)\mu_\om(|g|)$$ 
 for every function $g$ and recall that $K(\sigma^n\om)\leq K(\om)e^{\ve n}$. Combining this with the previous estimates we get that
\begin{equation}\label{G est}
\left|m(g_{\sigma^n\om}h_{\sigma^n\om})-m(g_{\sig^n\om}\cL_{\sigma^{[n/2}]\om}^{n-[n/2]}\textbf{1})\right|C_0\leq K(\om)e^{-(\la-\ve)n/2}m(|g_{\sig^n\om}|)
\end{equation}
$$
\leq CK(\om)^2\mu_{\sigma^n\om}(|g_{\sigma^n\om}|)e^{-(\la-3\ve)n/2}.
$$
Therefore,
\begin{equation}\label{IJ}
\int g\cdot(\cK^n\varphi)d\mu=\int \mu_\om(\varphi_{\om})m(g_{\sig^n\om}\cL_{\sigma^{[n/2}]\om}^{n-[n/2]}\textbf{1})d\bbP(\om)+I+J
\end{equation}
where  $|I|\leq Ce^{-\la n}\|g\|_{L^1(\mu)}$ and $|J|\leq C'e^{-(\la-3\ve)n/2}\|g\|_{L^1(\mu)}$ and  we have used that $K(\om)^2\|\varphi_\om\|_{BV}$ is bounded.

Next, using \eqref{Exp1} and that $K(\om)$ is tempered we have $h_\om=\lim_{n\to\infty}\cL_{\sigma^{-n}\om}^n\textbf{1}$, and therefore $h_\om$ depends only on the coordinates $\om_j$ for $j\leq 0$. Thus  
$$
\mu_\om(\varphi_{\om})=F(\om_j; j\leq 0)
$$
for some measurable function $F$ so that $|F|\leq \|\varphi\|_{L^1(\mu)}$. Observe also that the random variable 
$$
G_n(\om)=m(g_{\sig^n\om}\cL_{\sigma^{[n/2}]\om}^{n-[n/2]}\textbf{1})
$$
depends only on $\om_j, j\geq [n/2]$ since $g_\om(x)$ is a function of $x$ and $\om_j, j\geq0$ (i.e. it factors through $\pi_0$).
 In the case when $h_\om\geq c^{-1}>0$ for some constant $c>0$ we have
$$
|G_n(\om)|=\left|\mu_{\sig^n\om}\big(g_{\sig^n\om}L^{n-[n/2]}_{\sig^{[n/2]}\om}(1/h_{\sig^{[n/2]}\om})\big)\right|\leq c\mu_{\sigma^n\om}(|g_{\sig^n\om}|).
$$
Thus, using also lemma \ref{PhiLem} we see that there is a constant $C>0$ so that
$$
\left|\int \mu_\om(\varphi_{\om})m(g_{\sig^n\om}\cL_{\sigma^{[n/2}]\om}^{n-[n/2]}\textbf{1})d\bbP(\om)\right|\leq C\phi_{[n/2],R}\int|G_n(\om)|d\bbP(\om)\leq cC\phi_{[n/2],R}\|g\|_{L^1(\mu)}
$$
where we have taken into account that $\int\mu_\om(\varphi_\om)d\bbP(\om)=\mu(\varphi)=0$. This, together with \eqref{IJ} and the previous estimates on $I$ and $J$, proves \eqref{1st}.

To prove \eqref{2nd}, we first use \eqref{G est}  in order to obtain that 
\begin{equation}\label{G est 1}
|G_n(\om)|\leq C\mu_{\sig^n\om}(|g_{\sigma^n\om}|)\big(1+CK^2(\om)e^{-(\la-3\ve)n/2}\big)\leq C'\mu_{\sig^n\om}(|g_{\sigma^n\om}|)K(\om)^2.
\end{equation}
Taking into account that
 $$
 \text{esssup}_{\om\in\Om}(\|\varphi_\om\|_{L^\infty}K(\om)^2)\leq C \text{esssup}_{\om\in\Om}(\|\varphi_\om\|_{BV}K(\om)^2)<\infty
 $$ 
 we conclude that
$G_n(\om)\mu_{\om}(\varphi_\om)$ is integrable. Now, we would like to apply Lemma \ref{PsiLem}, but the problem is that $G_n$ is not bounded. To overcome that,
 for each $M>0$ set $G_n^{(M)}(\om)=G_n(\om)\bbI(|G_n(\om)|\leq M)$. Then, since $G_n(\om)\mu_{\om}(\varphi_\om)$ is integrable, by the dominated convergence theorem we have 
$$
\int \mu_\om(\varphi_\om)G_n(\om)d\bbP(\om)=\lim_{M\to\infty}\int \mu_\om(\varphi_\om)G_n^{(M)}(\om)d\bbP(\om).
$$
Now, taking $n$ so that $\psi_{[n/2]}\leq 1/2$ and using that $\mu(\varphi)=0$ we get from Lemma \ref{PsiLem} that 
$$
\left|\int \mu_\om(\varphi_\om)G_n^{(M)}(\om)d\bbP(\om)\right|\leq2\left(\int |G_n^{(M)}(\om)\mu_\om(\varphi_\om)|d\bbP(\om)\right)\psi_{[n/2]}\leq
$$ 
$$
2\left(\int |G_n(\om)\mu_\om(\varphi_\om)|d\bbP(\om)\right)\psi_{[n/2]}.
$$
Using also \eqref{G est 1} and that $\text{esssup}_{\om\in\Om}(\|\varphi_\om\|_{BV}K(\om)^2)<\infty$, we conclude that
$$
\left|\int \mu_\om(\varphi_\om)G_n(\om)d\bbP(\om)\right|\leq 2\big(\esssup_{\om\in\Om}(K(\om)^2\|\varphi_\om\|_{BV})\big)C'\|g\|_{L^1}\psi_{[n/2]}
$$
and \eqref{2nd} follows (using also \eqref{IJ}).
\vskip0.1cm
(ii) First, since $\cK$ weakly contracts the $L^\infty$ norm (being defined through conditional expectations) and $\varphi$ is bounded we have
$$
\left\|\cK^i(\varphi \cK^j\varphi)-\mu\big(\cK^i(\varphi \cK^j\varphi)\big)\right\|_{L^\infty}\leq 2\|\varphi\|_{L^\infty}\|\cK^j\varphi\|_{L^\infty}.
$$
This together with Proposition \ref{MainL} (i) provides the desired estimate when $j\geq i$. The estimate in the case $i>j$ is carried out similarly to the proof of (i). Let $g\in L^1(\Om\times X,\mu,\cF_0)$.  Let us first show that 
\begin{equation}\label{First}
\int \cK^i(\varphi \cK^j\varphi)g\,d\mu=\int \mu_{\om}\big(\varphi_\om\cdot (\varphi_{\sigma^j\om}\circ T_\om^j)\big)\mu_{\sig^{i+j}\om}(g_{\sigma^{i+j}\om})d\bbP(\om)+I
\end{equation}
where $|I|\leq C_2e^{-\la i}$, and $C_2$ is some constant.

In order to prove \eqref{First}, using that $\cK$ satisfies the duality relation and the disintegration $\mu=\int \mu_\om d\bbP(\om)$ we first have
$$
\int \cK^i(\varphi \cK^j\varphi)g\,d\mu=\int (\varphi \cK^j\varphi)\cdot g\circ\tau^i\,d\mu=
\int \cK^j\varphi\cdot\big(\varphi\cdot (g\circ\tau^i)\big)d\mu=\int
\big(\varphi\cdot(\varphi\circ\tau^j)\big)\cdot g\circ\tau^{i+j}\,d\mu
$$
\begin{equation}\label{Eq}
=\int\left (\int \varphi_\om\cdot (\varphi_{\sigma^j\om}\circ T_\om^{j})\cdot (g_{\sigma^{i+j}\om}\circ T_\om^{i+j})d\mu_\om\right)d\bbP(\om)
\end{equation}
$$
=\int\left(\int L^{i+j}_\om\big(\varphi_\om \cdot (\varphi_{\sigma^j\om}\circ T_\om^j)\big)g_{\sigma^{i+j}\om}d\mu_{\sigma^{i+j}\om}\right)d\bbP(\om).
$$
Next, since $L_\om^n(f\circ T_\om^n)=f$ for every function $f$ and $n$, we have
$$
L^{i+j}_\om\big(\varphi_\om\cdot (\varphi_{\sigma^j\om}\circ T_\om^j)\big)=L_{\sigma^j\om}^i(\varphi_{\sigma^j\om}L_{\om}^j\varphi_\om).
$$
By \eqref{Exp2} we have 
$$
\left\|L_{\om}^j\varphi_\om-\mu_\om(\varphi_\om)\right\|_{BV}\leq K(\om)\|\varphi_\om\|_{BV}e^{-\la j}.
$$
In particular, 
$$
\|L_{\om}^j\varphi_\om\|_{BV}\leq CK(\om)\|\varphi_\om\|_{BV}
$$
for some constant $C$.
Since $\|uv\|_{BV}\leq C_0\|u\|_{BV}\|v\|_{BV}$  for every two functions $u,v$ we have 
$$
\|\varphi_{\sigma^j\om}L_{\om}^j\varphi_\om\|_{BV}\leq C_0CK(\om)\|\varphi_\om\|_{BV}\|\varphi_{\sigma^j\om}\|_{BV}.
$$
Thus by \eqref{Exp2},
$$
\left\|L_{\sigma^j\om}^i(\varphi_{\sigma^j\om}L_{\om}^j\varphi_\om)-\mu_{\sigma^j\om}(\varphi_{\sigma^j\om}L_{\om}^j\varphi_\om)\right\|_{BV}
$$
$$
\leq  C_0CK(\om)K(\sigma^j\om)\|\varphi_\om\|_{BV}\|\varphi_{\sigma^j\om}\|_{BV} e^{-\la i}\leq C_0C\|\varphi\|_{K}^2e^{-\la i},
$$
where $\|\varphi\|_K=\esssup_{\om\in\Om}(K(\om)\|\varphi_\om\|_{BV})$. Observe next that 
$$
\mu_{\sigma^j\om}(\varphi_{\sigma^j\om}L_{\om}^j\varphi_\om)=\mu_\om\big(\varphi_\om\cdot (\varphi_{\sigma^j\om}\circ T_\om^j)\big).
$$
The desired inequality \eqref{First} follows from the above estimates.

Observe that the function $\mu_{\om}(\varphi_\om\cdot  \varphi_{\sigma^j\om}\circ T_\om^j)$ depends only on $\om_k$ for $k\leq j$ and that it is bounded by $CK^{-2}(\om)$ for some constant $C>0$ (since $\esssup_{\om\in\Om}(K(\om)^2\|\varphi_\om\|_{BV})<\infty$). Therefore, the same arguments in the proof of (i) yield that 
$$
\int \mu_{\om}\big(\varphi_\om\cdot (\varphi_{\sigma^j\om}\circ T_\om^j)\big)\mu_{\sig^{i+j}\om}(g_{\sigma^{i+j}\om})d\bbP(\om)=
\int \mu_{\om}\big(\varphi_\om\cdot (\varphi_{\sigma^j\om}\circ T_\om^j)\big)d\bbP(\om)\cdot \int \mu_{\sig^{i+j}\om}(g_{\sigma^{i+j}\om})d\bbP(\om)+J
$$
where $|J|\leq \gamma_i\|g\|_{L^1}$ and
$\gamma_i$ is one of the right hand sides on the upper bounds in (i) (depending on the case) with $n$ replaced by $i$. Notice  next that 
$$
\int \mu_{\om}\big(\varphi_\om\cdot (\varphi_{\sigma^j\om}\circ T_\om^j)\big)d\bbP(\om)=\int \cK^i(\varphi \cK^j\varphi)d\mu
$$
(this can be seen by taking $g=1$ in \eqref{Eq}). Hence, 
$$
\left|\int\left(\cK^i(\varphi \cK^j\varphi)-\mu(\cK^i(\varphi \cK^j\varphi)\right)g\,d\mu\right|\leq C(e^{-\la i}+\gamma_i)\|g\|_{L^1}
$$
and the desired estimate follows again since $L^\infty$ is the dual of $L^1$.
\end{proof}
\section{A vector valued ASIP for skew products with uniformly expanding fiber maps and exponentially fast $\al$-mixing base maps}
Let us first explain why the matrix $\Sigma^2$ exists. For a fixed vector $v$ the limit $s_v^2=\lim_{n\to\infty}\frac1n\bbE[(S_n\cdot v)^2]$ 
exists, by considering the real-valued  observable $\varphi\cdot v$. Then the matrix $\Sigma^2$ from Theorem \ref{ASIP GOU} is given by $(\Sig^2)_{i,j}=\frac{1}2\big(s_{e_i+e_j}^2-s_{e_i}^2-s_{e_j}^2\big)$. 
This matrix satisfies $\Sigma^2 v\cdot v=s_v^2$ and so it is not positive definite if and only if $\varphi\cdot v$ is a coboundary for some unit vector $v$. Note that this part does not require $T_\om$ to be uniformly expanding.

We assume next that there exist constants $C>0$ and $\delta\in(0,1)$ so that for $\mathbb P$ a.e. $\omega$ we have  
\begin{equation}\label{App1}
\|\mathcal L_\omega^n \textbf{1}-h_{\sigma^n\omega}\|_{BV}\leq C\delta^n
\end{equation}
(this is the uniform expansion assumption).


The proof of Theorem \ref{ASIP GOU} relies on an application of  \cite[Theorem 1.2]{GO}.
The main condition of \cite[Theorem 1.2]{GO} is the content of the following lemma. Once the lemma is proven Theorem \ref{ASIP GOU} follows from \cite[Theorem 1.2]{GO} applied with an arbitrary large $p$.
\begin{lemma}
There exists $\ve_0>0$, $c,C>0$ such that for any $n,m>0$, $b_1<b_2<...<b_{n+m+1}$,  $k>0$ and $t_1,...,t_{n+m}\in \bbR^d$ with $|t_j|\leq \ve_0$ we have
\begin{equation}\label{EQQ}
			\begin{split}
				\Big|\mathbb E_\mu&\big(e^{i\sum_{j=1}^nt_j \cdot (\sum_{\ell=b_j}^{b_{j+1}-1}B_\ell)+i\sum_{j=n+1}^{n+m}t_j \cdot (\sum_{\ell=b_j+k}^{b_{j+1}+k-1}B_\ell)}\big)\\
				&-\mathbb E_\mu\big(e^{i\sum_{j=1}^nt_j \cdot (\sum_{\ell=b_j}^{b_{j+1}-1}B_\ell)}\big)\cdot\mathbb E_\mu\big(e^{i\sum_{j=n+1}^{n+m}t_j \cdot (\sum_{\ell=b_j+k}^{b_{j+1}+k-1}B_\ell)}\big)\Big|\\
				&\leq C^{n+m} e^{-ck},
			\end{split}
		\end{equation}
		where $B_\ell=\varphi\circ\tau^\ell$.
\end{lemma}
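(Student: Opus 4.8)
Throughout write $a:=b_{n+1}$, so the first exponential in \eqref{EQQ} involves only the variables $B_\ell=\varphi\circ\tau^\ell$ with $\ell\in[b_1,a)$ and the second only those with $\ell\in[a+k,b_{n+m+1}+k)$; denote the two factors by $e^{iU}$ and $e^{iV_k}$, both bounded by $1$. I plan to prove the \emph{stronger} statement that the left--hand side of \eqref{EQQ} is $O(e^{-ck})$ \emph{uniformly} in $n,m$ and in the $t_j$'s with $|t_j|\le\ve_0$; this clearly implies the asserted bound (take $C\ge1$ large enough). Disintegrating $\mu=\int\mu_\om\,d\bbP(\om)$ reduces matters to two steps: (a) a fiberwise decoupling of $e^{iU}$ from $e^{iV_k}$ that exploits the gap of length $k$, and (b) an $\al$-mixing argument in the $\om$-direction. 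This mirrors the architecture of Section~3 (Lemma~\ref{L1} and its corollaries), only with the products of $\varphi$'s replaced by the bounded functions $e^{iU},e^{iV_k}$.

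For step (a) fix $\om$ and write $e^{iU(\om,\cdot)}=F_{A,\om}\circ T_\om^{b_1}$, where $F_{A,\om}(y)=\exp\!\big(i\sum_{\ell=b_1}^{a-1}\tilde t_\ell\cdot\varphi_{\om_\ell}(T_{\sigma^{b_1}\om}^{\ell-b_1}y)\big)$ with $\tilde t_\ell=t_j$ for $\ell\in[b_j,b_{j+1})$ (so $|\tilde t_\ell|\le\ve_0$), and likewise $e^{iV_k(\om,\cdot)}=F_{B,\om}\circ T_\om^{a+k}$ for an analogous $F_{B,\om}$; both have sup-norm $\le1$. Using equivariance of $\{\mu_\om\}$ and the duality defining $L_\om^n$, exactly as in Lemma~\ref{L1},
\[
\int e^{iU}e^{iV_k}\,d\mu_\om=\int\big(L_{\sigma^{b_1}\om}^{\,k+Q_A}F_{A,\om}\big)\,F_{B,\om}\,d\mu_{\sigma^{a+k}\om},\qquad Q_A:=a-b_1 .
\]
By \eqref{Exp2}, $\big\|L_{\sigma^{b_1}\om}^{\,k+Q_A}F_{A,\om}-\mu_{\sigma^{b_1}\om}(F_{A,\om})\mathbf 1\big\|_{BV}\le K(\sigma^{b_1}\om)\|F_{A,\om}\|_{BV}e^{-\la(k+Q_A)}$; since we are in the uniformly expanding case \eqref{App0}, \eqref{V} and the uniform bound on $\|\varphi_\om\|_{BV}$ give, just as in the estimate $\|F_1\|_{BV}\le 2Cd$ inside Lemma~\ref{L1}, that $\|F_{A,\om}\|_{BV}e^{-\la Q_A}$ is bounded by an absolute constant. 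As $\|F_{B,\om}\|_\infty\le1$ and $\mu_{\sigma^{b_1}\om}(F_{A,\om})=\mu_\om(e^{iU})$, $\mu_{\sigma^{a+k}\om}(F_{B,\om})=\mu_\om(e^{iV_k})$, integrating over $\om$ yields
\[
\bbE_\mu\!\big[e^{iU}e^{iV_k}\big]=\int \mu_\om(e^{iU})\,\mu_\om(e^{iV_k})\,d\bbP(\om)+O(e^{-\la k}).
\]

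For step (b) note that $\om\mapsto\mu_\om(e^{iU(\om,\cdot)})$ is bounded by $1$ and, since $h_\om=\lim_n\cL_{\sigma^{-n}\om}^n\mathbf 1$ by \eqref{Exp1} (so $\mu_\om$ depends only on $\om_j,\ j\le0$), is $\mathscr F_{-\infty,a-1}$-measurable. For the second factor write $\mu_\om(e^{iV_k})=\mu_{\sigma^{a+k}\om}(F_{B,\om})$ and replace $h_{\sigma^{a+k}\om}$ by $\cL_{\sigma^{a+k-p}\om}^{\,p}\mathbf 1$ with $p=\lceil k/2\rceil$; by \eqref{Exp1} and $\|F_{B,\om}\|_\infty\le1$ the replacement costs $O(e^{-\la p})$, and the approximant $m\big(F_{B,\om}\cdot\cL_{\sigma^{a+k-p}\om}^{\,p}\mathbf 1\big)=m\big(F_{B,\om}\circ T_{\sigma^{a+k-p}\om}^{\,p}\big)$ is bounded by $1$ and $\mathscr F_{a+k-p,\infty}$-measurable. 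Since $(a+k-p)-(a-1)\ge\lfloor k/2\rfloor$, the $\al$-mixing estimate \eqref{alpha} together with $\al_n=O(\al^n)$ gives
\[
\int \mu_\om(e^{iU})\,\mu_\om(e^{iV_k})\,d\bbP(\om)=\bbE_\mu[e^{iU}]\,\bbE_\mu[e^{iV_k}]+O\big(e^{-\la k/2}+\al^{\lfloor k/2\rfloor}\big),
\]
where $\int\mu_\om(e^{iU})\,d\bbP=\bbE_\mu[e^{iU}]$ and similarly for $e^{iV_k}$. Combining this with step (a) shows that the left-hand side of \eqref{EQQ} is $O(e^{-ck})$ for a suitable $c>0$, which proves the lemma. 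The main obstacle is precisely the $BV$-control of $F_{A,\om}$: these functions are only a priori bounded in $L^\infty$, with $BV$-norm growing with $Q_A$, so the decay in \eqref{Exp2} cannot be invoked naively. The resolution — identical to the mechanism behind Lemma~\ref{L1} — is that transporting the first group of summands forward to the common fibre over $\sigma^{a+k}\om$ costs $k+Q_A$ iterations, and the extra factor $e^{-\la Q_A}$ absorbs the growth of $\|F_{A,\om}\|_{BV}$ in $Q_A$ (the constraint $|t_j|\le\ve_0$ keeping the resulting constant uniform), so the effective contraction rate is $e^{-\la k}$; everything else is a routine merging of the fiberwise estimates \eqref{Exp1}–\eqref{Exp2} with the base-map $\al$-mixing, carried out as in Section~3.
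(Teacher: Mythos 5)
Your two-step architecture --- fiberwise decoupling across the single gap of length $k$, followed by an $\al$-mixing argument in the base --- is exactly the paper's, and your step (b) is essentially the paper's argument verbatim: the paper approximates the two fiberwise characteristic functions $G,F$ by functions of finitely many coordinates using \eqref{App0} and then invokes \eqref{alpha}; your refinement that only the ``future'' factor needs approximating (since $\mu_\om(e^{iU})$ is already $\mathscr F_{-\infty,a-1}$-measurable) is correct and harmless.

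The genuine gap is in step (a). The paper does not prove the fiberwise decoupling at all: it quotes \cite[Lemma 24]{DHS}, whose proof runs the \emph{twisted} transfer operators $g\mapsto L_\om(e^{it_j\cdot\varphi_\om}g)$ block by block, using a uniform Lasota--Yorke inequality valid for $|t_j|\le\ve_0$ to keep the $BV$ norms bounded throughout (this is precisely where the $C^{n+m}$ in the statement comes from). Your substitute --- forming the single composite function $F_{A,\om}$ for the entire first group and applying \eqref{Exp2} over $k+Q_A$ steps --- hinges on the claim that $\|F_{A,\om}\|_{BV}\,e^{-\la Q_A}$ is bounded by an absolute constant, and this is where the argument breaks. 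The mechanism you borrow from Lemma \ref{L1} works there because the factors $f_i$ have sup norm $O(1/K(\sig^s\om))$, which cancels the factor $\prod_{s<\ell}K(\sig^s\om)$ that \eqref{V} produces when composing with $T^{\ell-b_1}_{\sig^{b_1}\om}$. Your factors $e^{i\tilde t_\ell\cdot\varphi_{\om_\ell}}$ are unimodular, so no such cancellation is available, and the variation of $F_{A,\om}$ genuinely grows like $\prod_{s<Q_A}K(\sig^s\om)$, i.e.\ like $K^{Q_A}$ in the uniformly expanding case. Nothing in Assumption \ref{StanAss} forces $K\le e^{\la}$ (already for the full-branch $N$-to-one expanding interval map one has $K=N$ while $e^{\la}$ is of order $N/2$), so $\|F_{A,\om}\|_{BV}\,e^{-\la(k+Q_A)}$ need not be $O(e^{-\la k})$; in particular your stronger claim of a bound uniform in $n+m$ is a symptom of this unjustified step rather than an improvement. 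To repair step (a) you must either redo the quenched estimate via the perturbed-operator route of \cite{DHS} (accepting the $C^{n+m}$ loss) or simply cite \cite[Lemma 24]{DHS}, as the paper does.
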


\begin{proof}
First, denoting by $\bbE_\om$ the expectation with respect to $\mu_\om$, by \cite[Lemma 24]{DHS} there are $\ve_0>0$, $c,C>0$  with the property that  for every $n,m>0$, $b_1<b_2<...<b_{n+m+1}$,  $k>0$ and $t_1,...,t_{n+m}\in \bbR^d$ such that $|t_j|\leq \ve_0$,
\begin{equation}\label{EQQ fib}
			\begin{split}
				\Big|\mathbb E_\omega&\big(e^{i\sum_{j=1}^nt_j \cdot (\sum_{\ell=b_j}^{b_{j+1}-1}A_\ell)+i\sum_{j=n+1}^{n+m}t_j \cdot (\sum_{\ell=b_j+k}^{b_{j+1}+k-1}A_\ell)}\big)\\
				&-\mathbb E_\omega\big(e^{i\sum_{j=1}^nt_j \cdot (\sum_{\ell=b_j}^{b_{j+1}-1}A_\ell)}\big)\cdot\mathbb E_\omega\big(e^{i\sum_{j=n+1}^{n+m}t_j \cdot (\sum_{\ell=b_j+k}^{b_{j+1}+k-1}A_\ell)}\big)\Big|\\
				&\leq C^{n+m} e^{-ck},
			\end{split}
		\end{equation}
		where $\mathbb E_\omega(g)=\int g h_\omega dm$ and 
		\[A_\ell:=\varphi_{\sigma^\ell \omega} \circ T_\omega^\ell, \quad \ell\in \mathbb N. \]
		
Let 
$$
G(\omega)=\mathbb E_\omega\big(e^{i\sum_{j=1}^nt_j \cdot (\sum_{\ell=b_j}^{b_{j+1}-1}A_\ell)}\big)
$$
and 
$$
F(\omega)=\mathbb E_\omega\big(e^{i\sum_{j=n+1}^{n+m}t_j \cdot (\sum_{\ell=b_j+k}^{b_{j+1}+k-1}A_\ell)}\big).
$$
Then with $B_\ell=\varphi\circ\tau^\ell$ we have
\begin{equation}\label{EQQ1}
			\begin{split}
				\Big|\mathbb E_\mu&\big(e^{i\sum_{j=1}^nt_j \cdot (\sum_{\ell=b_j}^{b_{j+1}-1}B_\ell)+i\sum_{j=n+1}^{n+m}t_j \cdot (\sum_{\ell=b_j+k}^{B_{j+1}+k-1}B_\ell)}\big)\\
				&-\mathbb E_\mu\big(e^{i\sum_{j=1}^nt_j \cdot (\sum_{\ell=b_j}^{b_{j+1}-1}B_\ell)}\big)\cdot\mathbb E_\mu\big(e^{i\sum_{j=n+1}^{n+m}t_j \cdot (\sum_{\ell=b_j+k}^{b_{j+1}+k-1}B_\ell)}\big)\Big|\\
				&\leq C^{n+m} e^{-ck}+|\text{Cov}_{\mathbb P}(G,F)|.
			\end{split}
		\end{equation}
		Using \eqref{App1} and that $(T_\om)_*\mu_\om=\mu_{\sig\om}$ we get that there are $k_0\in\mathbb Z $ and functions $G_1$ and $F_1$ so that
	$$
	\|G(\omega)-G_1(...,\omega_{k_0-1},\omega_{k_0+[k/4]})\|_{L^\infty}\leq C'\delta^{k/4}
	$$
	and 
	$$
	\|G(\omega)-G_1(\omega_{k_0+k-[k/4]},\omega_{k_0+k-[k/4]+1},...)\|_{L^\infty}\leq C'\delta^{k/4}.
	$$
	Thus,
	$$
	|\text{Cov}_{\mathbb P}(G,F)|\leq |\text{Cov}_{\mathbb P}(G_1,F_1)|+C''\delta^{k/4}
	$$
	where we have used that $G_1,G_2,G$ and $F$ are uniformly bounded (so the above constants $C',C''$ do not depend on the choice of $b_j,t_j$ etc.). On the other hand, by \eqref{alpha}, 
	$$
	|\text{Cov}_{\mathbb P}(G_1,F_1)|\leq C'''\alpha^{k/2}.
	$$
	Thus,
	\begin{equation}\label{EQQ2}
			\begin{split}
				\Big|\mathbb E_\mu&\big(e^{i\sum_{j=1}^nt_j \cdot (\sum_{\ell=b_j}^{b_{j+1}-1}B_\ell)+i\sum_{j=n+1}^{n+m}t_j \cdot (\sum_{\ell=b_j+k}^{B_{j+1}+k-1}B_\ell)}\big)\\
				&-\mathbb E_\mu\big(e^{i\sum_{j=1}^nt_j \cdot (\sum_{\ell=b_j}^{b_{j+1}-1}B_\ell)}\big)\cdot\mathbb E_\mu\big(e^{i\sum_{j=n+1}^{n+m}t_j \cdot (\sum_{\ell=b_j+k}^{b_{j+1}+k-1}B_\ell)}\big)\Big|\\
				&\leq C^{n+m} e^{-ck}+C''\delta^{\delta k/4}+C'''\alpha^{k/2}.
			\end{split}
	\end{equation}
\end{proof}

\section{Extensions and generalizations and additional results, a short discussion}
In this section we will describe a few additional results which can also be obtained using the methods of the current paper.
In order not to overload the paper the section is presented in a form of a discussion rather than explicit formulations of theorems.
\subsection{More general mixing base maps for continuous in $\om$ transfer operators, a short discussion}
Let $(\xi_n)_{n\in\mathbb Z}$ be a stationary process taking values on a metric space $(\mathcal Y,d)$ satisfying the following approximation and mixing conditions:

There are sub-$\sigma$-algebras $\cG_{n,m}$ on the underlying probability space so that $\cG_{n,m}\subset \cG_{n_1,m_1}$ if $[n,m]\subset [n_1,m_1]$ and for each r and n there is an $\cG_{n-r,n+r}$ measurable random variable $\xi_{n,r}$ so that:
\vskip0.1cm

(1)\textbf{ approximation}: $\|d(\xi_n,\xi_{n,r})\|_{L^\infty}\leq A_1\beta^r$, $\beta\in(0,1)$
\vskip0.1cm
(2)\textbf{ mixing}: the sequences $(\xi_{2nr,r})_{n\in\mathbb Z}$ are  $\alpha$ (or $\phi_R$ or $\psi$) mixing uniformly in $r$.
\vskip0.1cm

We note that the above uniform approximation by $\al$-mixing sequences applies to Young towers, when $\al_n=O(n^{-(p-2)})$ is the tails of the tower are $O(n^{-p})$ for some $p\geq 3$. We can also take several classes of smooth maps on the interval or Gibbs-Markov maps \cite{Jon} for which such an approximation holds with $\psi_n=O(\del^n)$ for some $\del\in(0,1)$.

Let $(\Omega,\mathcal F,\mathbb P,\sigma)$ be the shift system constructed as before. Then  all the results stated in the paper hold true when  $\omega\to\mathcal L_{\omega}$ and $\omega\to\varphi_\omega$ are H\"older continuous in $\omega$ (on a set with probability $1$). The main point is that Lemma \ref{LemmmaAp} and the similar approximations used in the construction of the martingale (i.e. in the  proof of Proposition \ref{MainL}) can be obtained by first approximating (taking $r=r_n=\ve_0n$ for some small $\ve_0$) and using the mixing conditions on the approximating sequences. The main reason we did not include such results in the body of the paper is that it would make the notations more complicated, and that the additional essentially global regularity assumptions on the transfer operators are somehow less natural.

\subsection{Extension to random Gibbs measures}
Let us consider now the random expanding maps $T_\om$ as in \cite{MSU}. 
Let $\mu_\om=h_\om \nu_\om$ be a random Gibbs measure corresponding to a given random logarithmically $\al$-H\"older continuous potential, and let $\la_\om$ be the exponent of the random pressure. Namely, if $\cL_\om$ is the transfer operator corresponding to the random potential, then 
$$
\cL_\om h_\om=\la_\om h_{\sig\om}, (\cL_\om)^*\nu_{\sigma\om}=\la_\om\nu_{\om}.
$$
Next, for the sake of simplicity let us consider here random expanding maps as in \cite[Chapter 5]{HK}.
Then there is a constant $K>0$ so that 
 with $\tilde \cL_\om=\cL_\om/\la_\om$ we have  
$$
\|\tilde\cL_\om^n-\nu_\om\otimes h_{\sig^n\om}\|_{Holder}\leq Ke^{-\la n}
$$
where $\|\cdot\|_{Holder}$ is the usual H\"older norm corresponding to the exponent $\al$ and $\nu\otimes h(g)=\nu(g)h$.  Plugging in $g=\textbf{1}$ we get similar estimates to the ones we had in \eqref{Exp1}:
$$
\|\tilde\cL_\om^n- h_{\sig^n\om}\|_{Holder}\leq Ke^{-\la n}.
$$
Remark also that $h_\om\geq c>0$ for some constant $c>0$ (see \cite{HK}).

The main additional difficulty here is to estimate expressions of the form $\mu_\om(F_\om)$ (as in Lemma \ref{LemmmaAp}) by functions of the coordinates in places $j$ for $|j|\leq n$. Once this is achieved, we can use the approximation argument (similarly to Lemma \ref{LemmmaAp}) which was essential in the proofs of all of the results stated in the body of the paper. The main difference in comparison with the case when $\nu_\om=m$ does not depend on $\om$ is that now we need to approximate $\nu_\om$ by functions of the first $n$ coordinates (exponentially fast in $n$). For uniformly expanding maps, this follows from the construction of $\nu_\om$ as a certain uniform limit (see \cite[Ch. 4-5]{HK}).

\subsection{Extension to nonconventional sums (multiple recurrences)}
Let us consider partial ``nonconventional" sums of the form 
$$
S_n\varphi=\sum_{m=1}^{n}\prod_{j=1}^\ell \varphi\circ\tau^{q_j(m)}
$$
where $\ell$ is an integer and $q_j(n)$ are positive integer-valued sequences. The statistical properties of such sums were  studies for several classes of expanding or hyperbolic maps (in particular), see \cite{KifPTRF, KV, AIHP} and references therein.
When all $q_j$'s are polynomials, we believe that all the results obtained using the method of cumulants (i.e. Theorems \ref{CLT}, \ref{Thm:ModDevNonc}, \ref{Thm2.4}, \ref{MomThm} and an appropriate version of 
Theorem \ref{FCLT}) can be obtained for such sums exactly as in \cite{AIHP}, relying on a version of Proposition \ref{PropMulti} applied with $\rho(n,m)=\max_{1\leq i,j\leq\ell}|q_i(m)-q_j(n)|$. The main idea is that by induction on the number of blocks we can show that the conditions of Proposition \ref{GorcCor} with that $\rho=\rho_\ell$ hold true for 
$$X_m=\prod_{j=1}^\ell \varphi\circ\tau^{q_j(m)}.$$
That is, by an inductive argument similar to the one in \cite[Corollary 1.3.11]{HK}, we can prove the following result.

\begin{lemma}
Let $r\in\bbN$ and 
let $B_1,B_2,...,B_k$ be finite subsets of $\bbN$ so that the distance between $B_j$ and $B_{j+1}$ is $d_j$. Set $r_j=[d_j/3]$. Let $\cC=\{\cC_j:\,1\leq j\leq s\}$ be a partition of $\{1,2,...,k\}$ and set $Y_j=\prod_{k\in \cC_j}\prod_{u\in B_k}\varphi\circ\tau^{u}$. Then, assuming that $\|\varphi\|_{L^\infty}\leq 1$ and that $\esssup_{\om\in\Om}(K(\om)\|\varphi_\om\|_{BV})\leq1$, there is an absolute constant $A>1$ so that 
$$
\left|\bbE_\mu\left[\prod_{j=1}^s Y_j\right]-\prod_{j=1}^s\bbE_\mu[Y_j]\right|\leq A^m\sum_{j=1}^{m}(\delta^{r_j}+\alpha([r_j]))
$$
where $\del=e^{-(\la-3\ve)/2}\in(0,1)$.
\end{lemma}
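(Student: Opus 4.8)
\emph{Plan.} I would argue by induction on the number $k$ of blocks $B_1,\dots,B_k$, at each step decorrelating the leftmost block from everything to its right. The only external input is Corollary~\ref{ProMult} specialised to two blocks: for block contributions $F=\prod_{u\in B}\vf\circ\tau^u$ and $G=\prod_{u\in B'}\vf\circ\tau^u$ with $B$ to the left of $B'$ and gap $d$ between them, one has $\sup|F|,\sup|G|\le 1$ and
\[
|\bbE_\mu[FG]-\bbE_\mu[F]\,\bbE_\mu[G]|\le A_0(\del^{[d/3]}+\al([d/3])),
\]
where $A_0$ is an absolute constant and $\del=e^{-(\la-3\ve)/2}$ (the constant of Corollary~\ref{ProMult} is no larger); as in Lemma~\ref{L1}, the lengths of the blocks never enter the bound, since they only contribute a factor of the form $2Cd$ which is absorbed into $\sup_d 2de^{-(\la-\ve)d}<\infty$. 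I would also record two monotonicity facts for repeated use: $\del^{[D/3]}\le\del^{[d/3]}$ and $\al([D/3])\le\al([d/3])$ whenever $D\ge d$ (the $\al_n$ being nonincreasing); and, for the gap between two non-consecutive blocks $B_l,B_{l'}$ (which contains all of $d_l,\dots,d_{l'-1}$), both the $\del$- and $\al$-terms evaluated at one third of that gap are dominated by $\sum_{l\le p<l'}(\del^{r_p}+\al(r_p))$.

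\emph{Inductive step.} If all blocks lie in one class ($s=1$) the left-hand side vanishes, so assume $s\ge2$; let $B_1$ be the leftmost block and $\cC_{j_0}$ the class with $1\in\cC_{j_0}$, and write $Y_{j_0}=F_{B_1}\cdot\tilde F$ with $\tilde F=\prod_{l\in\cC_{j_0}\setminus\{1\}}F_{B_l}$ ($\tilde F\equiv 1$ when $\cC_{j_0}=\{1\}$). Since $\prod_jY_j=F_{B_1}\cdot R$ with $R$ the product of the block contributions over $B_2,\dots,B_k$, and $B_1$ lies to the left of $B_2\cup\cdots\cup B_k$ with gap $d_1$, the two-block estimate gives $|\bbE_\mu[\prod_jY_j]-\bbE_\mu[F_{B_1}]\,\bbE_\mu[R]|\le A_0(\del^{r_1}+\al(r_1))$. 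Now $R=\tilde F\cdot\prod_{j\ne j_0}Y_j$ is itself a product of block contributions over the $k-1$ blocks $B_2,\dots,B_k$ grouped by the partition $\{\cC_{j_0}\setminus\{1\}\}\cup\{\cC_j:j\ne j_0\}$ of $\{2,\dots,k\}$ (drop the empty piece in the singleton case), so the induction hypothesis bounds $|\bbE_\mu[R]-\bbE_\mu[\tilde F]\prod_{j\ne j_0}\bbE_\mu[Y_j]|$ by $A^{\,k-2}\sum_{j=2}^{k-1}(\del^{r_j}+\al(r_j))$. Finally, the leftmost block of $\tilde F$ has index $\ge 2$, so $F_{B_1}$ and $\tilde F$ are separated by a gap of size at least $d_1$, whence the two-block estimate together with the monotonicity facts gives $|\bbE_\mu[Y_{j_0}]-\bbE_\mu[F_{B_1}]\bbE_\mu[\tilde F]|\le A_0(\del^{r_1}+\al(r_1))$, and hence the same bound between $\prod_j\bbE_\mu[Y_j]$ and $\bbE_\mu[F_{B_1}]\bbE_\mu[\tilde F]\prod_{j\ne j_0}\bbE_\mu[Y_j]$ (all factors being at most $1$ in modulus). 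Three triangle inequalities and $|\bbE_\mu[F_{B_1}]|\le 1$ then yield
\[
|\bbE_\mu[\textstyle\prod_jY_j]-\textstyle\prod_j\bbE_\mu[Y_j]|\le 2A_0(\del^{r_1}+\al(r_1))+A^{\,k-2}\textstyle\sum_{j=2}^{k-1}(\del^{r_j}+\al(r_j))\le A^{\,k-1}\textstyle\sum_{j=1}^{k-1}(\del^{r_j}+\al(r_j)),
\]
with $A:=2\max(A_0,1)$; this closes the induction and is the assertion (with $m=k-1$, the number of gaps).

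\emph{Main obstacle.} The routine parts are the monotonicity bookkeeping and checking that the two-block case of Corollary~\ref{ProMult} decays like $\del^{[d/3]}+\al([d/3])$; these repeat the arguments of Lemma~\ref{L1} and its corollaries. The one genuine point — and the reason two separate decorrelations of $B_1$ are needed rather than a single split of the product into ``$B_1$'' versus ``the rest'' — is that the class $\cC_{j_0}$ of the leftmost block is in general interleaved with the other classes, so $\bbE_\mu[Y_{j_0}]$ does not factor through $F_{B_1}$ for free and must itself be decorrelated across the possibly large gap between $B_1$ and the remainder of $\cC_{j_0}$; re-expressing that large-gap estimate in terms of the local gaps $d_j$ via the monotonicity facts is the step I would be most careful about. (A non-inductive alternative — factor $\bbE_\mu[\prod_jY_j]$ over all $k$ blocks by Corollary~\ref{ProMult}, and each $\bbE_\mu[Y_j]$ over its constituent blocks likewise — gives a prefactor linear in $s$ instead of $A^m$, but the exponential bound already suffices for the subsequent application of Proposition~\ref{GorcCor}.)
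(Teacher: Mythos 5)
Your proposal is correct and takes essentially the same route as the paper, which does not write the argument out but explicitly describes the proof as ``induction on the number of blocks \dots\ similar to the one in \cite[Corollary 1.3.11]{HK}'' — i.e.\ precisely the peel-off-the-leftmost-block induction you perform, with the two-block case of Corollary~\ref{ProMult} as the only external input and the double decorrelation of $B_1$ (once inside $\bbE_\mu[\prod_j Y_j]$ and once inside $\bbE_\mu[Y_{j_0}]$) being the standard device. The only point to record in a write-up is the one you already flag: the right-hand ``block'' in your two-block applications is a union of scattered blocks rather than an interval, so one must note that the proofs of Lemma~\ref{L1} and Corollaries~\ref{Cor1}--\ref{ProMult} use only the minimum of that set (to control $\sup|G_2|$ by $CK(\sigma^{n+d}\om)^{-1}$) and hence go through verbatim.
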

We note that in order to prove a version of the functional CLT for the sums above we first need to use the arguments in \cite{KV,HK2} to compute the variance of the limiting Gaussian, which for general polynomials might differ from a Brownian motion, and this can also be done by using the above lemma.



\end{document}